%% LyX 2.4.4 created this file.  For more info, see https://www.lyx.org/.
%% Do not edit unless you really know what you are doing.
\documentclass[english]{article}
\usepackage[T1]{fontenc}
\usepackage[latin9]{inputenc}
\usepackage{color}
\usepackage{refstyle}
\usepackage{mathrsfs}
\usepackage{enumitem}
\usepackage{amsmath}
\usepackage{amsthm}
\usepackage{amssymb}
\usepackage{setspace}
\onehalfspacing

\makeatletter

%%%%%%%%%%%%%%%%%%%%%%%%%%%%%% LyX specific LaTeX commands.

\AtBeginDocument{\providecommand\secref[1]{\ref{sec:#1}}}
\AtBeginDocument{\providecommand\subsecref[1]{\ref{subsec:#1}}}
\AtBeginDocument{\providecommand\lemref[1]{\ref{lem:#1}}}
\AtBeginDocument{\providecommand\thmref[1]{\ref{thm:#1}}}
\AtBeginDocument{\providecommand\corref[1]{\ref{cor:#1}}}
\AtBeginDocument{\providecommand\propref[1]{\ref{prop:#1}}}
\RS@ifundefined{subsecref}
  {\newref{subsec}{name = \RSsectxt}}
  {}
\RS@ifundefined{thmref}
  {\def\RSthmtxt{theorem~}\newref{thm}{name = \RSthmtxt}}
  {}
\RS@ifundefined{lemref}
  {\def\RSlemtxt{lemma~}\newref{lem}{name = \RSlemtxt}}
  {}

%%%%%%%%%%%%%%%%%%%%%%%%%%%%%% Textclass specific LaTeX commands.
      % auxiliary length 
\theoremstyle{plain}
\newtheorem{thm}{\protect\theoremname}[section]
\theoremstyle{remark}
\newtheorem{rem}[thm]{\protect\remarkname}
\theoremstyle{definition}
\newtheorem{defn}[thm]{\protect\definitionname}
\theoremstyle{plain}
\newtheorem{lem}[thm]{\protect\lemmaname}
\theoremstyle{plain}
\newtheorem{prop}[thm]{\protect\propositionname}
\theoremstyle{plain}
\newtheorem{cor}[thm]{\protect\corollaryname}
\theoremstyle{definition}
\newtheorem{example}[thm]{\protect\examplename}

\@ifundefined{date}{}{\date{}}
%%%%%%%%%%%%%%%%%%%%%%%%%%%%%% User specified LaTeX commands.

%\AtBeginDocument{\providecommand\condref[1]{\ref{cond:#1}}}
%\AtBeginDocument{\providecommand\defref[1]{\ref{def:#1}}}
\AtBeginDocument{}
%\AtBeginDocument{\providecommand\caseref[1]{\ref{case:#1}}}
\AtBeginDocument{}
\AtBeginDocument{\providecommand\propref[1]{\ref{prop:#1}}}
\RS@ifundefined{propref}{\newref{prop}{name =Proposition~,names = Propositions~}}{}
\AtBeginDocument{\providecommand\corref[1]{\ref{cor:#1}}}
\RS@ifundefined{corref}{\newref{cor}{name =Corollary~,names = Corollaries~}}{}
\RS@ifundefined{remref}{\newref{rem}{name =Remark~,names = Remarks~}}{}
\RS@ifundefined{defref}{\newref{def}{name =Definition~,names = Definitions~}}{}
\RS@ifundefined{exaref}{\newref{exa}{name =Example~,names = Examples~}}{}
\RS@ifundefined{eqqref}{\newref{eqq}{name =Equality~,names = Equality~}}{}
%\AtBeginDocument{\renewcommand\secref[1]{Section~\ref{sec:#1}}}
\AtBeginDocument{\renewcommand\subsecref[1]{Section~\ref{subsec:#1}}}
\AtBeginDocument{\renewcommand\secref[1]{Section~\ref{sec:#1}}}

\def\@fnsymbol#1{\ensuremath{\ifcase#1\or *\or **\or \ddagger\or
   \mathsection\or \mathparagraph\or \|\or \dagger\dagger
   \or \ddagger\ddagger \else\@ctrerr\fi}}

\usepackage{enumitem, hyperref}
\makeatletter
\def\namedlabel#1#2{\begingroup
    #2%
    \def\@currentlabel{#2}%
    \phantomsection\label{#1}\endgroup
}

\usepackage{tikz}

\usepackage{tocbibind}
\usepackage{youngtab}
\usepackage{authblk}
\usetikzlibrary{matrix}
\usepackage{bbding}
\usepackage{ytableau}
\usepackage{amsmath}
\usepackage{geometry}
\usepackage{mathtools}
\usepackage{mathrsfs}

\tikzset{   pt/.style={insert path={node[scale=2]{.}}},   dnup/.style={insert path={ [pt] .. controls +(0,1) and +(0,-1) .. +(#1,2) [pt]}},   dndn/.style={insert path={ [pt] .. controls +(0,0.25) and +(0,0.25) .. +(#1,0) [pt]}},   upup/.style={insert path={ [pt] .. controls +(0,-0.25) and +(0,-0.25) .. +(#1,0) [pt]}}, upup2/.style={insert path={ [pt] .. controls +(0,-0.5) and +(0,-0.5) .. +(#1,0) [pt]}}, }

\newcommand{\Lt}{\widetilde{\mathcal{L}}_E}
\newcommand{\Ht}{\widetilde{\mathcal{H}}_E}

\DeclareMathOperator{\Hom}{Hom}

\DeclareMathOperator{\id}{\bf{id}}
\DeclareMathOperator{\h}{\bf{h}}

\DeclareMathOperator{\sgn}{sgn}
\DeclareMathOperator{\Rad}{Rad}
\DeclareMathOperator{\im}{\mathsf{im}}

\DeclareMathOperator{\Span}{span}

\DeclareMathOperator{\Rc}{\mathcal{R}}
\DeclareMathOperator{\Lc}{\mathcal{L}}
\DeclareMathOperator{\Hc}{\mathcal{H}}

\DeclareMathOperator{\Jc}{\mathcal{J}}

\DeclareMathOperator{\op}{op}

\DeclareMathOperator{\db}{\mathbf{d}}
\DeclareMathOperator{\rb}{\mathbf{r}}

\DeclareMathOperator{\VS}{\mathbf{VS}}

\DeclareMathOperator{\Op}{O}

\DeclareMathOperator{\OFD}{OFD}
\DeclareMathOperator{\OD}{OD}
\DeclareMathOperator{\COD}{COD}

\def\RSlemtxt{Lemma~}
\def\RSthmtxt{Theorem~}

\usepackage{babel}
\providecommand{\corollaryname}{Corollary}
  \providecommand{\definitionname}{Definition}
  \providecommand{\examplename}{Example}
  \providecommand{\lemmaname}{Lemma}
  \providecommand{\propositionname}{Proposition}
  \providecommand{\remarkname}{Remark}
 
\providecommand{\theoremname}{Theorem}
% Added by lyx2lyx
\setlength{\parskip}{\smallskipamount}
\setlength{\parindent}{0pt}

\makeatother

\usepackage{babel}
\providecommand{\corollaryname}{Corollary}
\providecommand{\definitionname}{Definition}
\providecommand{\examplename}{Example}
\providecommand{\lemmaname}{Lemma}
\providecommand{\propositionname}{Proposition}
\providecommand{\remarkname}{Remark}
\providecommand{\theoremname}{Theorem}

\begin{document}
\title{Representation theory of monoids consisting of order-preserving functions
and order-reversing functions on an $n$-set}
\author{Itamar Stein\thanks{Mathematics Unit, Shamoon College of Engineering, 77245 Ashdod, Israel}\\
\Envelope \, Steinita@gmail.com}
\maketitle
\begin{abstract}
We consider the monoid of order-preserving functions and order-reversing
functions on an $n$-set. Over a field of characteristic not \,2,
we obtain a quiver presentation for the monoid algebra: two straight
paths with $n-1$ and $n$ vertices, respectively, and all compositions
are zero. We also classify homomorphisms between induced left Schützenberger
modules. We define a covering monoid with a distinction between order-preserving
and order-reversing constant functions. We show that the algebra of
the covering monoid is isomorphic to the direct product of two copies
of the algebra of the monoid of all order-preserving functions on
an n-set.
\end{abstract}
\textbf{Mathematics Subject Classification. }20M20; 20M25; 20M30; 

\section{Introduction }

In the representation theory of finite monoids, it is common to study
algebras of monoids that possess a natural combinatorial structure.
In particular, a typical choice is to investigate algebras of transformation
monoids (see, for example, \cite{Grensing2014,Grood2002,Putcha1996,Putcha1998,Solomon2002,Stein2016,Stein2019,Stein2020,Stein2024preprint,Steinberg2006,Steinberg2008,Steinberg2016B};
see also \cite[Chapter 13]{Steinberg2016} and \cite[Chapter 11]{Ganyushkin2009b}).

In this paper, we study the algebras of two closely related monoids.
The monoid $\OD_{n}$, which consists of all order-preserving functions
and order-reversing functions on the set $[n]=\{1,\dots,n\}$ (see
\cite{Fernandes2005,Umar2014,Fernandes2015}). We also introduce a
monoid, denoted $\COD_{n}$, which resembles $\OD_{n}$ but with a
distinction between order-preserving and order-reversing constant
functions. For $\OD_{n}$ our goal is to find a quiver presentation
for its monoid algebra $\Bbbk\OD_{n}$ where $\Bbbk$ is a field whose
characteristic is not 2.

A quiver presentation of an algebra $A$ is a standard method for
describing an algebra in the theory of associative algebras \cite[Chapter II]{Assem2006},\cite[Chapter 3]{Barot2015}.
It consists of a (unique) generating graph $Q$, called the quiver
of $A$, and a (non-unique) set $R$ of relations between paths in
$Q$. Every split basic algebra $B$ can be presented by such tuple
$(Q,R)$. If $A$ is Morita equivalent to a split basic algebra $B$
(that is, the module categories of $A$ and $B$ are equivalent) then
a quiver presentation for $B$ is considered also as a quiver presentation
for $A$. 

If an algebra $A$ is defined by other means, then finding a quiver
presentation for it becomes a natural question. In particular, if
$A=\Bbbk M$ is a monoid algebra, where $M$ is a finite monoid and
$\Bbbk$ is a field, then determining its quiver or providing a quiver
presentation is a fundamental problem in the representation theory
of finite monoids \cite[Chapter 17]{Steinberg2016}. There are many
monoids and classes of monoids for which a description of the quiver
is known \cite{Denton2011,Margolis2015,Margolis2011,Margolis2012,Margolis2018B,Saliola2007,Shahzamanian2021,Stein2016}.
Relatively, far less is known about quiver presentations \cite{Saliola2023,Margolis2021B,Ringel2000,Saliola2009,Stein2020,Steinberg2025}.

In \secref{Preliminaries}, we provide the required preliminaries.
In particular, in \subsecref{Pre_Algebras-and-modules}, we associate
to each $\Lc$-class $L$ of $M$ an $\Bbbk M$-module $\Bbbk L$
known as the induced Schützenberger module of $L$. Then we turn to
examine the monoid $\OD_{n}$ in detail. In \subsecref{Elementary_observations},
we prove that all sandwich matrices of $\OD_{n}$ (to be defined in
\subsecref{Pre_Monoids}) are right invertible. By a result of Margolis
and Steinberg, the right invertibility of the sandwich matrices implies
that the regular left module $\Bbbk\OD_{n}$ is a direct sum of the
induced Schützenberger modules of its $\Lc$-classes. In particular,
this implies that $\Bbbk L$ is a projective module for every $\Lc$-class
$L$. In \subsecref{Construction-of-homomorphisms}, we describe in
detail the linear category formed by the induced Schützenberger modules
and homomorphisms between them. The opposite of this category is essentially
the same as the algebra $\Bbbk\OD_{n}$ in the sense that their module
categories are equivalent, allowing us to derive information about
the quiver presentation from this linear category. To obtain a split
basic algebra, we switch our attention in \subsecref{The-skeletal-category}
to the skeleton of this linear category and, in \subsecref{Composition-of-homomorphisms},
find relations between its morphisms. This provides sufficient information
about the associated algebra to compute its quiver presentation directly
by finding a complete set of primitive orthogonal idempotents. The
main theorem on $\Bbbk\OD_{n}$ states that its quiver consists of
two straightline paths, one with $n-1$ vertices and one with $n$
vertices, and that all compositions of consecutive arrows are equal
to $0$. 

In \secref{COD_section}, we turn to the monoid $\COD_{n}$ and define
it as a submonoid of $\OD_{n}\times\mathbb{Z}_{2}$. From an algebraic
point of view, we obtain more elegant results for $\COD_{n}$ and
its algebra. We prove that $\COD_{n}\simeq\Op_{n}\rtimes\mathbb{Z}_{2}$,
where $\Op_{n}$ is the monoid of all order-preserving functions on
$[n]$. Moreover, adapting the approach of \secref{Order_preserving},
we obtain that $\Bbbk\COD_{n}\simeq\Bbbk\Op_{n}\times\Bbbk\Op_{n}$,
where $\Bbbk$ is a field whose characteristic is not 2. Finally,
we show that the quiver of $\Bbbk\COD_{n}$ consists of two straightline
paths with $n$ vertices, and that all compositions of consecutive
arrows are equal to $0$.

\textbf{Acknowledgment}: The author is grateful to Professor Abdullahi
Umar for suggesting the investigation of the representation theory
of the monoid $\OD_{n}$, and to the referee for a careful reading
and for suggestions that improved the paper.

\section{\label{sec:Preliminaries}Preliminaries}

\subsection{\label{subsec:Pre_Monoids}Monoids}

Let $M$ be a monoid. Green's preorders $\leq_{\Rc}$, $\leq_{\Lc}$
and $\leq_{\Jc}$ are defined by:
\begin{align*}
a\,\leq_{\Rc}\,b\iff & aM\subseteq bM\\
a\,\leq_{\Lc}\,b\iff & Ma\subseteq Mb\\
a\,\leq_{\Jc}\,b\iff & MaM\subseteq MbM.
\end{align*}
The associated Green's equivalence relations on $M$ are denoted by
$\Rc$, $\Lc$ and $\Jc$. Recall also that $\Hc=\Rc\cap\Lc$. It
is well-known that $\Lc$ ($\Rc$) is a right congruence (respectively,
left congruence). An element $a\in M$ is called\emph{ regular} if
there exists $b\in M$ such that $aba=a$. The monoid $M$ is \emph{regular}
if all its elements are regular.

If $M$ is finite then $\Jc=\Rc\circ\Lc=\Lc\circ\Rc$. So we can view
a specific $\Jc$-class $J$ as a matrix (or an ``eggbox'') whose
rows are indexed by the $\Rc$-classes in $J$ and its columns are
indexed by the $\Lc$-classes in $J$. Every entry in this matrix
is an $\Hc$-class. A $\Jc$-class $J$ is called \emph{regular} if
it contains a regular element. In this case, every element of $J$
is regular, and every $\Rc$-class and every $\Lc$-class in $J$
contains an idempotent. An $\Hc$-class contains an idempotent if
and only if it is a (maximal) subgroup of $M$. All group $\Hc$-classes
in the same regular $\Jc$-class $J$ are isomorphic so we can denote
this group by $G_{J}$. Let $J$ be a regular $\Jc$-class whose $\Rc$-classes
are indexed by $I$ and its $\Lc$-classes are indexed by $\Lambda$
(so $J$ is a $I\times\Lambda$ matrix of $\Hc$-classes). If $R_{i}$
is an $\Rc$-class and $L_{\lambda}$ is an $\Lc$-class where $i\in I$
and $\lambda\in\Lambda$, then we denote the intersection $\Hc$-class
$R_{i}\cap L_{\lambda}$ by $H_{i,\lambda}$. Without loss of generality,
assume that $1\in I,\Lambda$ is an index in both sets such that $H_{1,1}\simeq G_{J}$
is a group $\Hc$-class. Moreover, for every $i\in I$ we choose a
representative $a_{i}\in H_{i,1}$ such that $a_{1}\in H_{1,1}\simeq G_{J}$
is the identity element of $G_{J}$ (i.e., the unique idempotent of
$H_{1,1}).$ For every $\lambda\in\Lambda$ we choose a representative
$b_{\lambda}\in H_{1,\lambda}$ such that $b_{1}=a_{1}$. It is known
that $b_{\lambda}a_{i}\in H_{1,1}$ if and only if $H_{i,\lambda}$
contains an idempotent (i.e., it is a group $\Hc$-class). The \emph{sandwich
matrix $P_{J}$} associated with $J$ is a $\Lambda\times I$ matrix
over $G_{J}\cup\{0\}$ defined by 
\[
[P_{J}]_{\lambda,i}=\begin{cases}
b_{\lambda}a_{i} & \text{\ensuremath{H_{i,\lambda}} contains an idempotent}\\
0 & \text{otherwise}
\end{cases}
\]
where $\lambda\in\Lambda$ and $i\in I$. We will omit $J$ and write
$P$ if the $\Jc$-class is clear from context. It is known that the
invertibility of $P_{J}$ over $\Bbbk G_{J}$ does not depend on the
choice of representatives. The reason behind this definition and additional
basic notions and proofs of monoid theory can be found in standard
textbooks such as \cite{Howie1995}.

\subsection{Categories and linear categories}

Let $Q$ be a (finite, directed) graph. We denote by $Q^{0}$ its
set of vertices (or \emph{objects}) and by $Q^{1}$ its set of edges
(or \emph{morphisms}). We allow $Q$ to have more than one morphisms
between two objects. We denote by $\db$ and $\rb$ the domain and
range functions
\[
\db,\rb:Q^{1}\to Q^{0}
\]
assigning to every morphism $m\in Q^{1}$ its \emph{domain $\db(m)$
}and its \emph{range $\rb(m)$.} The set of edges with domain $a$
and range $b$ (also called an \emph{hom-set}) is denoted $Q(a,b)$.

Since any category has an underlying graph, the above notations hold
also for categories. For every graph $Q$, denote by $Q^{\ast}$ the
\emph{free category} generated by the graph $Q$. The object sets
of $Q^{\ast}$ and $Q$ are identical but the morphisms of $Q^{\ast}$
are the paths in $Q$ (including one empty path for every object).
The composition in $Q^{\ast}$ is concatenation of paths (from right
to left) and the empty paths are the identity elements.

A category $E$ is called a \emph{category with zero morphisms} if
for every $x,y\in E^{0}$ there exists a zero morphism $z_{y,x}\in E(x,y)$
and for every three objects $x,y,w\in E^{0}$ and morphisms $m^{\prime}\in E(y,w)$,
$m\in E(x,y)$ we have 
\[
m^{\prime}z_{y,x}=z_{w,x}=z_{w,y}m.
\]

A \emph{relation} $R$ on a category $E$ is a relation on the set
of morphisms $E^{1}$ such that $mRm^{\prime}$ implies that $\db(m)=\db(m^{\prime})$
and $\rb(m)=\rb(m^{\prime})$. A relation $\theta$ on a category
$E$ is called a (category) congruence if $\theta$ is an equivalence
relation with the property that $m_{1}\theta m_{2}$ and $m_{1}^{\prime}\theta m_{2}^{\prime}$
implies $m_{1}^{\prime}m_{1}\theta m_{2}^{\prime}m_{2}$ whenever
$\rb(m_{1})=\db(m_{1}^{\prime})$ (and $\rb(m_{2})=\db(m_{2}^{\prime})$).

Let $\Bbbk$ be a field. A \emph{$\mathbb{\Bbbk}$-linear category}
is a category $L$ enriched over the category of $\mathbb{\Bbbk}$-vector
spaces $\VS_{\mathbb{\Bbbk}}$. This means that every hom-set of $L$
is a $\mathbb{\Bbbk}$-vector space and the composition of morphisms
is a bilinear map with respect to the vector space operations. We
will discuss only $\Bbbk$-linear categories where the hom-sets are
finite-dimensional $\Bbbk$-vector spaces.

For any category $E$, we associate a $\mathbb{\Bbbk}$-linear category
$L_{\Bbbk}[E]$, called the \emph{linearization} of $E$, defined
in the following way. The objects of $L_{\Bbbk}[E]$ and $E$ are
identical, and every hom-set $L_{\Bbbk}[E](a,b)$ is the $\Bbbk$-vector
space with basis $E(a,b)$. The composition of morphisms in $L_{\Bbbk}[E]$
is defined naturally in the only way that extends the composition
of $E$ and forms a bilinear map.

Let $L$ be a $\mathbb{\Bbbk}$-linear category. A relation $\rho$
on $L^{1}$ is called a ($\mathbb{\Bbbk}$-linear category) \emph{congruence}
if $\rho$ is a category congruence and also a vector space congruence
on every hom-set $L(a,b)$. By a vector space congruence on a vector
space $V$ we mean an equivalence relation $\rho$ on $V$ such that
$u_{1}\rho v_{1}$ and $u_{2}\rho v_{2}$ implies $(u_{1}+u_{2})\rho(v_{1}+v_{2})$
and $u\rho v$ implies $(ku)\rho(kv)$ for every $k\in\Bbbk$. The
quotient $\Bbbk$-linear category $L/\rho$ is then defined in the
natural way. For every relation $R$ on $L$ there exists a unique
minimal congruence on $L$ that contains $R$. We will denote this
congruence by $\rho_{R}$. Now we can define a presentation of $\mathbb{\Bbbk}$-linear
categories. Let $Q$ be a subgraph of $L$ such that $Q^{0}=L^{0}$.
The free $\mathbb{\Bbbk}$-linear category generated by $Q$ is the
category $L_{\Bbbk}[Q^{\ast}]$ of all linear combinations of paths.
If $R$ is a relation on $L_{\Bbbk}[Q^{\ast}]$ such that $L_{\Bbbk}[Q^{\ast}]/\rho_{R}\simeq L$,
then we say that $(Q,R)$ is a \emph{presentation} of $L$.

\subsection{\label{subsec:Pre_Algebras-and-modules}Algebras and modules}

A $\Bbbk$-\emph{algebra} is a $\Bbbk$-linear category with a single
object. Every $\Bbbk$-algebra $A$ considered in this paper is finite-dimensional
and unital, and all $A$-modules are assumed to be finite-dimensional. 

Let $L$ be a $\Bbbk$-linear category. There is a $\Bbbk$-algebra
associated with $L$ that we will denote $\mathcal{\mathscr{A}}[L]$.
As a set $\mathcal{\mathscr{A}}[L]$ consists of combinations of morphisms
form $L$:
\[
\mathcal{\mathscr{A}}[L]=\{x_{1}+\ldots+x_{n}\mid x_{i}\in L^{1}\}
\]
Addition and scalar multiplication are defined in the natural way.
Multiplication is defined as an extension of 
\[
y\cdot x=\begin{cases}
yx & \text{if \ensuremath{yx} is defined in }L\\
0 & \text{otherwise}
\end{cases}
\]
where $x,y\in L^{1}$.

We view the algebra $\mathscr{A}[L]$ as essentially the same object
as $L$. The difference is that in the algebra, we have ``forgotten''
the multiple objects, thereby losing some information that $L$ carries.
Conversely, we can start with an algebra $A$ that we wish to decompose
into a multi-object linear category. This can be done in the language
of projective modules or primitive idempotents, both will be useful
in this paper. 

Recall that an $A$-module $P$ is called \emph{projective} if the
functor $\Hom_{A}(P,-)$ is an exact functor. Equivalently, $P$ is
projective if it is a direct summand of $A^{k}$ (for some $k\in\mathbb{N}$)
as an $A$-module.

Let $A\simeq{\displaystyle \bigoplus_{i=1}^{n}P_{i}}$ be a decomposition
of a $\Bbbk$-algebra $A$ into a direct sum of projective modules.
Let $\mathcal{L}$ be the $\Bbbk$-linear category whose objects are
the modules $\{P_{1},\ldots,P_{n}\}$ and the morphisms are all the
homomorphisms between them. Then, it is well-known that $\mathscr{A}[\mathcal{L}]\simeq A^{\op}$
as $\Bbbk$-algebras. We reach a maximum number of objects in $\mathcal{L}$
if we decompose $A$ into indecomposable projective modules.

For the quiver presentation, it will be convenient to describe the
decomposition of $A$ into a linear category also in the language
of primitive idempotents. \textcolor{black}{Recall that two idempotents
}$e,f\in A$\textcolor{black}{{} are called orthogonal if }$ef=fe=0$.\textcolor{black}{{}
A non-zero idempotent} $e\in A$ is called \emph{primitive} if it
is not a sum of two non-zero orthogonal idempotents. This is equivalent
to $eAe$ being a local algebra (i.e., an algebra with no non-trivial
idempotents). It is known that $e\in A$ is primitive if and only
if $Ae$ is an indecomposable projective module. A \emph{complete
set of primitive orthogonal idempotents }is a set of primitive, mutually
orthogonal idempotents $\{e_{1},\ldots,e_{r}\}$ whose sum is $1$.
We can associate to every algebra $A$ a linear category, denoted
$\mathscr{L}(A)$, in the following way. The objects are in one-to-one
correspondence with a complete set of primitive idempotents. The hom-set
$\mathscr{L}(A)(e_{i},e_{j})$ is the set $e_{j}Ae_{i}$, that is,
all the elements $a\in A$ such that $e_{j}ae_{i}=a$. Composition
of two morphisms is naturally defined as their product in the algebra
$A$. The algebra $\mathscr{A}[\mathscr{L}(A)]$ is isomorphic to
$A$.

We now add the assumption that $A$ is split basic. We denote by $\Rad A$
the Jacobson radical of $A$, which is the minimal ideal $I$ such
that $A/I$ is a semisimple algebra. The algebra $A$ is called \emph{split
basic} if $A/\Rad A\simeq\Bbbk^{n}$, i.e, the maximal semisimple
quotient of $A$ is a direct product of the base field.\textcolor{red}{{}
}The \emph{(ordinary) quiver} of a split basic algebra $A$ is a directed
graph $Q$ defined as follows: The set of vertices of $Q$ is in one-to-one
correspondence with a complete set of primitive orthogonal idempotents
$\{e_{1},\ldots,e_{n}\}$. The edges (more often called \emph{arrows})
from $e_{k}$ to $e_{r}$ are in one-to-one correspondence with a
basis of the vector space $(e_{r}+\Rad^{2}A)(\Rad A/\Rad^{2}A)(e_{k}+\Rad^{2}A)$.
It is well-known that this definition does not depend on the specific
choice of the primitive orthogonal idempotents. The quiver $Q$ of
$A$ is naturally identified with a subgraph of $\mathscr{L}(A)$.
There exists a $\Bbbk$-linear relation $R$ on $L_{\Bbbk}[Q^{\ast}]$
such that $(Q,R)$ is a presentation for $\mathscr{L}(A)$ (\cite[Theorem 1.9 on page 65]{Auslander1997}).
Such a pair $(Q,R)$ is called a \emph{quiver presentation }for the
algebra $A$.

Two algebras are Morita equivalent if their module categories are
equivalent. For two split basic algebras, $A$ and $B$, that are
Morita equivalent, a quiver presentation for $A$ is also a quiver
presentation for $B$. Even if $A$ is not split basic but it is Morita
equivalent to a split basic algebra $B$, a quiver presentation $(Q,R)$
for $B$ is also regarded as a quiver presentation for $A$. This
is because the category of representations of the bound quiver $(Q,R)$
is equivalent to the category of $A$-modules.

Let $E$ be a finite category and let $\Bbbk$ be a field. The \emph{category
algebra $\Bbbk E$ }is defined as $\mathscr{A}[L_{\Bbbk}[E]]$. Equivalently,
we can say that $\Bbbk E$ consists of all formal linear combinations
\[
\{k_{1}m_{1}+\ldots+k_{n}m_{n}\mid k_{i}\in\mathbb{\Bbbk},\,m_{i}\in E^{1}\}
\]
and the multiplication in $\mathbb{\Bbbk}E$ is the linear extension
of the following:
\[
m^{\prime}\cdot m=\begin{cases}
m^{\prime}m & \text{if \ensuremath{m^{\prime}m} is defined}\\
0 & \text{otherwise}.
\end{cases}
\]

The unit element of $\Bbbk E$ is${\displaystyle \sum_{x\in E^{0}}1_{x}}$
where $1_{x}$ is the identity morphism of the object $x$ of $E$.

If $E$ is a category with zero morphisms, then the zero morphisms
span an ideal $Z$ of $\Bbbk E$. The \emph{contracted category algebra
}$\Bbbk_{0}E$ is defined as $\Bbbk E/Z$. It identifies all the zero
morphisms of $E$ with the zero of the algebra $\Bbbk E$.

Every finite monoid $M$ is a finite category with a single object.
Consequently, the monoid algebra $\Bbbk M$ is a special case of a
category algebra.

For every $\Lc$-class $L$ we denote by $\Bbbk L$ the $\Bbbk$-vector
space spanned by elements of $L$. It is known \cite[Page 57]{Steinberg2016}
that $\Bbbk L$ is a $\Bbbk M$-module whose operation is defined
by 
\[
s\bullet x=\begin{cases}
sx & sx\in L\\
0 & \text{otherwise}.
\end{cases}
\]

The module $\Bbbk L$ is also called the \emph{induced Schützenberger
module} of $L$.

The next theorem follows from \cite[Theorem 4.7 and its proof]{Margolis2018B}
when we specialize to the special case of a regular monoids.
\begin{thm}
\label{thm:Decomposition_Projective_modules}Let $M$ be a finite
regular monoid and let $\Bbbk$ be a field. Let $\{L_{1},\ldots,L_{n}\}$
be the $\Lc$-classes of $M$. Assume that for every $\Jc$ class
$J$ the sandwich matrix $P_{J}$ is right invertible over $\Bbbk G_{J}$.
Then, for every $i\in\{1,\ldots,n\}$ the module $\Bbbk L_{i}$ is
a projective $\Bbbk M$-module and 
\[
\Bbbk M\simeq\bigoplus_{i=1}^{n}\Bbbk L_{i}
\]
is a decomposition of $\Bbbk M$ into a sum of projective modules.
\end{thm}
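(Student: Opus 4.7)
The goal is to exhibit a complete family of pairwise orthogonal idempotents $\{\epsilon_L\}_L$ in $\Bbbk M$, indexed by the $\Lc$-classes of $M$, such that $\Bbbk M\epsilon_L\simeq\Bbbk L$ as left $\Bbbk M$-modules. Once this is established, the Peirce decomposition $\Bbbk M=\bigoplus_L\Bbbk M\epsilon_L$ of the regular left module yields both the projectivity of each $\Bbbk L$ (as a direct summand of the free module of rank one) and the desired module isomorphism. So the proof reduces to constructing the $\epsilon_L$'s.

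I would build them one $\Jc$-class at a time, using the filtration of $\Bbbk M$ by the two-sided ideals spanned by elements $\leq_{\Jc}$-below a given class. Fix a regular $\Jc$-class $J$; the principal factor at $J$ is a Rees matrix semigroup over $G_J$ with sandwich matrix $P_J$. The given right inverse $Q_J$ of $P_J$ (satisfying $P_J Q_J=\id$) lets one write down, for each $\lambda\in\Lambda$, an explicit element $\tilde{\epsilon}_\lambda$ in the contracted principal factor algebra: a $\Bbbk G_J$-linear combination of the group $\Hc$-classes $H_{i,\lambda}$ whose coefficients are read off from the $\lambda$-th column of $Q_J$. A direct matrix computation using $P_J Q_J=\id$ verifies that the $\tilde{\epsilon}_\lambda$'s are mutually orthogonal idempotents summing to the identity of that contracted algebra.

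Next I would lift these to $\Bbbk M$ by induction on the $\Jc$-order, processing $\Jc$-classes from the minimal one upward. Suppose orthogonal idempotents have been constructed for every $\Lc$-class strictly $\Jc$-below $J$, and let $f$ be their sum in $\Bbbk M$. For each $\lambda$, pick any preimage $\widehat{\epsilon}_\lambda$ of $\tilde{\epsilon}_\lambda$ under the projection from the ideal of elements $\leq_{\Jc} J$ onto the contracted principal factor algebra, and set $\epsilon_{L_\lambda}=(1-f)\,\widehat{\epsilon}_\lambda\,(1-f)$, correcting once by the standard idempotent-lifting trick to turn it into a true idempotent modulo the nilpotent slack. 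Conjugation by $(1-f)$ immediately forces orthogonality with every previously constructed idempotent; orthogonality within $J$ holds modulo the ideal of strictly $\Jc$-lower elements, hence is preserved by the lift. A telescoping argument across all $\Jc$-classes shows that the full sum of the $\epsilon_L$'s equals $1\in\Bbbk M$.

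The final step is to identify $\Bbbk M\epsilon_L$ with $\Bbbk L$. The Schützenberger action gives a canonical surjection $\Bbbk M\epsilon_L\twoheadrightarrow\Bbbk L$ sending $m\epsilon_L\mapsto m\bullet\epsilon_L$, well-defined because $\epsilon_L$ is supported on $L$ modulo strictly $\Jc$-lower elements. Its kernel consists of contributions from $\Lc$-classes strictly below $L$ in the $\Jc$-order, but by the inductive construction such contributions have already been gathered into the other summands $\Bbbk M\epsilon_{L'}$, forcing the kernel inside $\Bbbk M\epsilon_L$ itself to be zero. In my view the principal obstacle is the global orthogonality: showing that the Rees-matrix-level orthogonality provided by $P_JQ_J=\id$ survives the lifting through the $\Jc$-filtration. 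This is precisely where the right-invertibility hypothesis genuinely bites, and the $(1-f)$-conjugation together with careful bookkeeping of the filtration is what makes it go through.
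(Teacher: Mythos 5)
The paper does not actually prove this statement: it is imported from Margolis--Steinberg (the cited Theorem~4.7), specialized to regular monoids. Your blind reconstruction is therefore necessarily an independent argument, and its architecture --- orthogonal idempotents in each contracted principal factor built from a right inverse $Q_J$ of $P_J$, lifted through the $\Jc$-filtration and made globally orthogonal by conjugation with $1-f$ --- is the standard and correct one. In particular, taking $\tilde\epsilon_\lambda$ to be the matrix whose $\lambda$-th column is that of $Q_J$ and whose other columns vanish, the identity $P_JQ_J=\mathrm{id}$ does give $\tilde\epsilon_\lambda\tilde\epsilon_\mu=\delta_{\lambda\mu}\tilde\epsilon_\lambda$, exactly as you claim.

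Three of your supporting claims, however, are not right as stated, and the last is a genuine gap. (i) When $P_J$ is right but not two-sided invertible, the contracted principal factor algebra is in general \emph{not} unital (take the two-element left-zero semigroup with a unit adjoined: the sandwich matrix is the $1\times 2$ matrix with both entries $1$, which is right invertible, yet $\Bbbk\{a,b\}$ has no identity), so the $\tilde\epsilon_\lambda$ cannot ``sum to the identity of that contracted algebra''. Completeness of the global family must come from elsewhere. (ii) The ideal spanned by the elements strictly $\Jc$-below $J$ is not nilpotent, so there is no ``nilpotent slack''; idempotents do lift modulo arbitrary ideals of a finite-dimensional algebra, but that is the fact you must invoke, not the usual correction polynomial. (Also $f$ must include the idempotents of \emph{all} previously processed $\Jc$-classes, not only those strictly $\Jc$-below $J$, since incomparable classes are not automatically orthogonal.) (iii) Most substantively, $\Bbbk M\epsilon_L\simeq\Bbbk L$ is asserted rather than proved. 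The map $x\mapsto x\bullet w$ is a well-defined homomorphism on $\Bbbk M\epsilon_L$ only once you exhibit $w\in\Bbbk L$ with $\epsilon_L\bullet w=w$, and it is onto only if $w$ generates the Sch\"{u}tzenberger module; both require a second use of $P_JQ_J=\mathrm{id}$. Identifying $\Bbbk L_\lambda$ with $(\Bbbk G_J)^I$, the element $\epsilon_L$ acts by $v\mapsto Q_JE_{\lambda\lambda}P_Jv$; the $\lambda$-th column $w$ of $Q_J$ is fixed by this action, and $\Bbbk M\bullet w$ contains $AP_Jw$ for every $I\times\Lambda$ matrix $A$ over $\Bbbk G_J$, where $P_Jw$ is the $\lambda$-th standard basis vector, so $\Bbbk M\bullet w=(\Bbbk G_J)^I=\Bbbk L$. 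Your injectivity argument (``the kernel has already been gathered into the other summands'') is circular. The clean way to finish is: surjectivity gives $\dim_{\Bbbk}\Bbbk M\epsilon_L\geq|L|$ for each $L$, while orthogonality gives $\sum_L\dim_{\Bbbk}\Bbbk M\epsilon_L\leq\dim_{\Bbbk}\Bbbk M=|M|=\sum_L|L|$, forcing every surjection to be an isomorphism and $\sum_L\epsilon_L=1$ simultaneously; this also repairs point (i).
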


\begin{rem}
In fact, \cite[Theorem 4.7]{Margolis2018B} tells us how to decompose
$\Bbbk L_{i}$ further to obtain indecomposable projective modules.
However, at the moment we will be content with this incomplete decomposition.
Later, we will handle the final phase of decomposition into indecomposable
projective modules in detail in order to find a quiver presentation.
\end{rem}

\section{\label{sec:Order_preserving}The monoid of order-preserving functions
and order-reversing functions}

\subsection{\label{subsec:Elementary_observations}Elementary observations}

For every $n\in\mathbb{N}$ we set $[n]=\{1,\ldots,n\}$. 
\begin{defn}
A function $f:[n]\to[n]$ is called \emph{order-preserving} if $x_{1}\leq x_{2}\implies f(x_{1})\le f(x_{2})$
and \emph{order-reversing} if $x_{1}\leq x_{2}\implies f(x_{1})\geq f(x_{2})$. 
\end{defn}

We denote by $\Op_{n}$ the monoid of all order-preserving functions
$f:[n]\to[n]$ and by $\OD_{n}$ the monoid of all functions $f:[n]\to[n]$
that are either order-preserving or order-reversing (for the latter
see \cite{Fernandes2005,Umar2014,Fernandes2015}).

Our main goal in this section is a description of a quiver presentation
for the algebra $\Bbbk\OD_{n}$ where $\Bbbk$ is a field whose characteristic
is not $2$. We start with elementary observations on $\OD_{n}$.

We denote by $P([n])$ the power set of $[n]$ and define $P_{1}([n])=\{X\in P([n])\mid1\in X\}$.
We denote by $\im(f)$ the image of a function $f:[n]\to[n]$. Recall
that the kernel of a function $f\in\OD_{n}$, denoted $\ker(f)$,
is the equivalence relation on its domain defined by $(x_{1},x_{2})\in\ker(f)$,
if and only if $f(x_{1})=f(x_{2})$. If $f$ is order-preserving or
order-reversing, the kernel classes of $f$ are \emph{intervals}:
if $x_{1}\leq x_{2}\leq x_{3}$ and $(x_{1},x_{3})\in\ker(f)$, then
$(x_{1},x_{2})\in\ker f$ as well. Let $f\in\OD_{n}$ and let $K_{1},\ldots,K_{l}$
be the kernel classes of $f$ arranged in an increasing order (if
$i\leq j$ and $x\in K_{i}$, $x^{\prime}\in K_{j}$, then $x\leq x^{\prime}$).
Choose $x_{i}$ to be the minimal element of $K_{i}$ (note that it
must be the case that $x_{1}=1$) and set $X=\{x_{1},\ldots,x_{l}\}$.
We call $X$ the \emph{kernel set} of $f$. Note also that if $X$
is the kernel set of $f$, then $|\im(f)|=|X|$.

From now on any list of elements $X=\{x_{1},\ldots,x_{l}\}\in P([n])$
will be ordered in an increasing order unless stated otherwise. For
every two sets $X\in P_{1}([n])$ and $Y\in P([n])$ such that $|X|=|Y|$
we can associate a unique order-preserving function $f_{Y,X}^{{\bf 1}}\in\OD_{n}$
whose kernel set is $X$ and image is $Y$. If $X=\{x_{1},\ldots,x_{l}\}$
and $Y=\{y_{1},\ldots,y_{l}\}$ then
\[
f_{Y,X}^{{\bf 1}}(x)=\begin{cases}
y_{i} & x_{i}\leq x<x_{i+1}\quad(1\leq i\leq l-1)\\
y_{l} & x_{l}\leq x.
\end{cases}
\]

The order-reversing functions of $\OD_{n}$ are also in one-to-one
correspondence with pairs of sets $X\in P_{1}([n])$ and $Y\in P([n])$
with the property that $|X|=|Y|$. We can associate a unique order-reversing
function $f_{Y,X}^{{\bf {\bf -1}}}\in\OD_{n}$ whose kernel set is
$X$ and image is $Y$:
\[
f_{Y,X}^{{\bf -1}}=\begin{cases}
y_{l+1-i} & x_{i}\leq x<x_{i+1}\quad(1\leq i\leq l-1)\\
y_{1} & x_{l}\leq x
\end{cases}
\]
A general element of $\OD_{n}$ may be written as $f_{Y,X}^{\alpha}$
where $\alpha\in\{{\bf 1},{\bf -1}\}$. The superscripts will be useful
because we will be able to write equalities like $f_{Z,Y}^{\beta}f_{Y,X}^{\alpha}=f_{Z,X}^{\beta\alpha}$
where $\beta\alpha$ is the product in the two elements group $\mathbb{Z}_{2}\simeq\{{\bf 1},{\bf -1}\}$.
We are not going to use inverse functions explicitly so $f_{Y,X}^{{\bf -1}}$
will cause no ambiguity. We also remark that $f_{Y,X}^{{\bf 1}}=f_{Y,X}^{-{\bf 1}}$
if and only if $|X|=|Y|=1$, and therefore $|\OD_{n}|=2|\Op_{n}|-n$
(see \cite[Proposition 1.3]{Fernandes2005}).

For every $X\in P_{1}([n])$ the function $f_{X,X}^{{\bf {\bf 1}}}$
is an idempotent that can be described by 
\[
f_{X,X}^{{\bf 1}}(i)=\max\{x\in X\mid x\leq i\}.
\]
It will be convenient to define $e_{X}=f_{X,X}^{{\bf 1}}$ where $X\in P_{1}([n])$.
We will use frequently the fact that $e_{Z}f_{Y,X}^{{\bf \alpha}}=f_{Y,X}^{{\bf \alpha}}$
if $Y\subseteq Z$ and $f_{Y,X}^{{\bf \alpha}}e_{Z}=f_{Y,X}^{{\bf \alpha}}$
if $X\subseteq Z$.

It is known that $\OD_{n}$ is a regular monoid and its Green's equivalence
relations are well-known (see \cite{Fernandes2005}). If $f_{Y_{1},X_{1}}^{\alpha},f_{Y_{2},X_{2}}^{\beta}\in\OD_{n}$
then
\begin{align*}
f_{Y_{1},X_{1}}^{\alpha}\,\Rc\,f_{Y_{2},X_{2}}^{\beta} & \iff\im(f_{Y_{1},X_{1}}^{\alpha})=\im(f_{Y_{2},X_{2}}^{\beta})\iff Y_{1}=Y_{2}\\
f_{Y_{1},X_{1}}^{\alpha}\,\Lc\,f_{Y_{2},X_{2}}^{\beta} & \iff\ker(f_{Y_{1},X_{1}}^{\alpha})=\ker(f_{Y_{2},X_{2}}^{\beta})\iff X_{1}=X_{2}\\
f_{Y_{1},X_{1}}^{\alpha}\,\Jc\,f_{Y_{2},X_{2}}^{\beta} & \iff\left|\im(f_{X_{1},Y_{1}})\right|=\left|\im(f_{X_{1},Y_{1}})\right|\iff|X_{1}|=|Y_{1}|=|X_{2}|=|Y_{2}|\\
f_{Y_{1},X_{1}}^{\alpha}\,\Hc\,f_{Y_{2},X_{2}}^{\beta} & \iff X_{1}=X_{2},\quad Y_{1}=Y_{2}.
\end{align*}

We want to show that all the sandwich matrices of the $\Jc$-classes
in $\OD_{n}$ are right invertible. For this we need a more detailed
description of the $\Jc$-classes. Consider the $\Jc$-class $J_{k}$
of all elements $f_{Y,X}^{\alpha}\in\OD_{n}$ with $|Y|=|X|=k$. The
$\Lc$-classes of $J_{k}$ are indexed by subsets $X\in P_{1}([n])$
such that $|X|=k$ so there are ${n-1 \choose k-1}$ $\Lc$-classes.
For every $X\in P_{1}([n])$ we denote by $L_{X}$ the associated
$\Lc$-class: $L_{X}=\{f_{Y,X}^{\alpha}\mid Y\in P([n]),\enspace|Y|=k\}$.
The $\Rc$-classes of $J_{k}$ are indexed by subsets $Y\in P([n])$
such that $|Y|=k$ so there are ${n \choose k}$ $\Rc$-classes. For
every $Y\in P([n])$ we denote by $R_{Y}$ the associated $\Rc$-class
$R_{Y}=\{f_{Y,X}^{\alpha}\mid X\in P_{1}([n]),\enspace|X|=k\}$. For
$X\in P_{1}([n])$ and $Y\in P([n])$ with $|X|=|Y|=k$ the associated
$\Hc$-class is $H_{Y,X}=R_{Y}\cap L_{X}=\{f_{Y,X}^{{\bf 1}},f_{Y,X}^{-{\bf 1}}\}$.
Note that 
\[
|H_{Y,X}|=\begin{cases}
2 & k>1\\
1 & k=1
\end{cases}
\]
so the group $G_{J}$ is $\mathbb{Z}_{2}$ if $k>1$ and trivial if
$k=1$. Denote by $\OFD_{n}$ the set of all functions $f_{Y,X}^{\alpha}\in\OD_{n}$
with $\im(f_{Y,X}^{\alpha})=Y\in P_{1}([n])$. Note that ${n-1 \choose k-1}$
of the $\Rc$-classes contain elements from $\OFD_{n}$ and ${n \choose k}-{n-1 \choose k-1}={n-1 \choose k}$
$\Rc$-classes contain elements that are not in $\OFD_{n}$. 
\begin{lem}
\label{lem:Sandwitch_right_invertible}The sandwich matrix $P$ of
every $\Jc$-class $J_{k}$ of $\OD_{n}$ is right invertible over
$\Bbbk G_{J}$.
\end{lem}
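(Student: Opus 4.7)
The plan is to exhibit an explicit form of the sandwich matrix $P=P_{J_k}$ that makes right invertibility transparent: with appropriate choices of representatives, $P$ will contain an identity block of size equal to the number of $\Lc$-classes of $J_k$, with every other entry zero. The case $k=1$ is immediate, since the only element of $P_1([n])$ of size one is $\{1\}$, so $J_1$ has a single $\Lc$-class; every $\Hc$-class in $J_1$ is a singleton consisting of an idempotent constant function, so $P$ is the $1\times n$ row vector of identities of the trivial group $G_{J_1}$, which is manifestly right invertible.

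For $k\geq 2$ the key preliminary step is to classify the idempotents of $J_k$. If $f\in\OD_n$ is order-reversing, then $f^2$ is order-preserving, so $f=f^2$ forces $f$ to be simultaneously order-preserving and order-reversing, hence constant; this is incompatible with $\rank(f)=k\geq 2$. Therefore every idempotent of $J_k$ has the form $e_X=f_{X,X}^{\mathbf{1}}$ with $X\in P_1([n])$ and $|X|=k$, and in particular $H_{Y,X}$ is a group $\Hc$-class if and only if $X=Y\in P_1([n])$.

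Next I would fix $X_1\in P_1([n])$ with $|X_1|=k$ (for instance $X_1=\{1,\ldots,k\}$), take $H_{1,1}=H_{X_1,X_1}$, and set $a_Y=f_{Y,X_1}^{\mathbf{1}}$ for each $\Rc$-class index $Y\in P([n])$ and $b_X=f_{X_1,X}^{\mathbf{1}}$ for each $\Lc$-class index $X\in P_1([n])$; these choices satisfy the normalisation conditions $a_1=b_1=e_{X_1}$. Using the composition rule $f_{Z,Y}^{\beta}f_{Y,X}^{\alpha}=f_{Z,X}^{\beta\alpha}$ recalled earlier in the section, the only non-zero entries of $P$ are
\[
[P]_{X,X}=b_X a_X=f_{X_1,X}^{\mathbf{1}}\,f_{X,X_1}^{\mathbf{1}}=f_{X_1,X_1}^{\mathbf{1}}=e_{X_1}=1_{G_{J_k}}.
\]
Since each row of $P$ carries a unique non-zero entry and these sit in distinct columns, $P$ contains an $\binom{n-1}{k-1}\times\binom{n-1}{k-1}$ identity submatrix, and hence admits a right inverse over $\Bbbk G_{J_k}$ (the $0/1$ matrix obtained by transposing the support pattern of $P$ will do). The only slightly delicate step is the idempotent classification in the second paragraph; once it is in place, the careful choice of representatives reduces everything to the single composition above.
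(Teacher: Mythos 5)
Your reduction to the case of order-preserving idempotents is fine (the observation that an order-reversing idempotent of rank $\geq 2$ cannot exist is correct), but the next step is a genuine error: it is \emph{not} true that every order-preserving idempotent of $J_k$ has the form $e_X=f_{X,X}^{\mathbf 1}$. An order-preserving map is idempotent precisely when it fixes its image, i.e.\ when the image picks one point out of each kernel class --- and that point need not be the minimum of the class, which is what the kernel set records. Concretely, in $\OD_3$ with $k=2$, the map $f=f_{\{1,3\},\{1,2\}}^{\mathbf 1}$ (sending $1\mapsto 1$, $2\mapsto 3$, $3\mapsto 3$) is an order-preserving idempotent with kernel set $\{1,2\}$ and image $\{1,3\}$, so $H_{\{1,3\},\{1,2\}}$ is a group $\Hc$-class with $Y\neq X$. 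Consequently the entry $[P]_{\{1,2\},\{1,3\}}$ of the sandwich matrix is non-zero, and your claimed form of $P$ (a single non-zero entry per row, sitting in an identity block) is false. Since your right inverse is built directly from that support pattern, the argument collapses at this point.

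The statement is still true, and the repair is exactly the extra structure the paper extracts: for $X,Y\in P_1([n])$ with $|X|=|Y|=k$, an idempotent $f_{Y,X}^{\mathbf 1}$ forces $x_i\leq y_i$ for every $i$ (the chosen image point of the $i$-th kernel class is at least its minimum). Ordering the size-$k$ subsets of $P_1([n])$ by any linear extension $\preceq$ of this componentwise order, one gets $[P]_{X,Y}=0$ whenever $Y\prec X$, while your computation $[P]_{X,X}=f_{Z,X}^{\mathbf 1}f_{X,Z}^{\mathbf 1}=e_Z=1_{G_{J_k}}$ (with $Z=\{1,\dots,k\}$) is correct and gives the diagonal. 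So the $\binom{n-1}{k-1}\times\binom{n-1}{k-1}$ block of $P$ indexed by $P_1([n])$ on both sides is upper unitriangular rather than the identity; it is still invertible over $\Bbbk G_{J_k}$, which yields right invertibility of $P$. In short: keep your choice of representatives and the diagonal computation, but replace the false idempotent classification by the inequality $X\leq Y$ and a triangularity argument.
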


\begin{proof}
Arrange the $\Rc$-classes of $J_{k}$ such that the first ${n-1 \choose k-1}$
classes contain elements of $\OFD_{n}$ so the ``upper'' part of
$J_{k}$ is a ${n-1 \choose k-1}\times{n-1 \choose k-1}$ block with
the elements of $\OFD_{n}\cap J_{k}$. Let 
\[
X=\{1,x_{2},\ldots,x_{k}\},\quad Y=\{1,y_{2},\ldots,y_{k}\}
\]
be two subsets of $[n]$ of size $k$. We write $X\leq Y$ if $x_{i}\leq y_{i}$
for every $i$. Let $\preceq$ be a total order on the set $\{X\in P_{1}([n])\mid|X|=k\}$
such that $X\leq Y\implies X\preceq Y$. Order the $\Lc$-classes
and the first ${n-1 \choose k-1}$ $\Rc$-classes of $J_{k}$ according
to $\preceq$. If $f_{Y,X}^{\alpha}$ is an idempotent for $X,Y\in P_{1}([n])$,
then it must be the case that $\alpha={\bf 1}$ and $x_{i}\leq y_{i}$
for every $i$, and hence $X\leq Y\implies X\preceq Y$. So all the
idempotents in $J_{k}$ are ``below'' or on the diagonal of the
${n-1 \choose k-1}\times{n-1 \choose k-1}$ block. We choose representatives
as follows. Set $Z=\{1,\ldots,k\}\in P_{1}([n])$. Choose $a_{Y}=f_{Y,Z}^{{\bf 1}}\in R_{Y}$
for every $Y\in P([n])$ with $|Y|=k$, and $b_{X}=f_{Z,X}^{{\bf 1}}\in L_{X}$
for every $X\in P_{1}([n])$ with $|X|=k$. Now consider the sandwich
matrix $P$ of $J$. If $Y\prec X$, then $H_{Y,X}$ is ``above''
the diagonal, so it does not contain an idempotent. Therefore, $P_{X,Y}=0$.
Note that every ``diagonal'' $\Hc$-class $H_{X,X}$ contains the
idempotent $e_{X}=f_{X,X}^{{\bf 1}}$. Therefore, $P_{X,X}=b_{X}a_{X}=f_{Z,Z}^{{\bf 1}}=e_{Z}$
which is also an idempotent. Hence, it is the unit of $G_{J}$, and
we can write $P_{X,X}=1_{G}$. Therefore, $P$ is a ${n-1 \choose k-1}\times{n \choose k}$
matrix such that $P_{X,X}=1_{G}$ and $P_{X,Y}=0$ if $Y\prec X$
(where $X,Y\in P_{1}([n])$ such that $|X|=|Y|=k$). Therefore, the
left ${n-1 \choose k-1}\times{n-1 \choose k-1}$ block of $P$ is
an upper unitriangular matrix and therefore an invertible matrix over
$\Bbbk G_{J}$. This implies that $P$ is right invertible over $\Bbbk G_{J}$.
\end{proof}
\lemref{Sandwitch_right_invertible} and \thmref{Decomposition_Projective_modules}
now imply:
\begin{prop}
\label{prop:Peirce_decomposition}Let $\Bbbk$ be a field, then for
every $X\in P_{1}([n])$ the $\Bbbk\OD_{n}$-module $\Bbbk L_{X}$
is a projective module and 
\[
\Bbbk\OD_{n}\simeq\bigoplus_{X\in P_{1}([n])}\Bbbk L_{X}
\]
is an isomorphism of $\Bbbk\OD_{n}$-modules. 
\end{prop}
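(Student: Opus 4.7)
The plan is to invoke \thmref{Decomposition_Projective_modules} directly; everything needed has already been assembled. First, I would observe that the hypotheses of the theorem are met for $M = \OD_n$: the monoid is explicitly noted in \subsecref{Elementary_observations} to be regular, and \lemref{Sandwitch_right_invertible} establishes that the sandwich matrix $P_{J_k}$ of every $\Jc$-class $J_k$ is right invertible over $\Bbbk G_{J_k}$. So the cited theorem applies without modification.

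The only remaining bookkeeping is to match up the indexing of $\Lc$-classes. From the description of Green's relations on $\OD_n$, two elements $f_{Y_1,X_1}^{\alpha}$ and $f_{Y_2,X_2}^{\beta}$ are $\Lc$-equivalent precisely when $X_1 = X_2$. Since the kernel set of any element of $\OD_n$ always contains $1$ (as the minimum element of the first kernel interval), the kernel sets are exactly the members of $P_1([n])$. Thus the $\Lc$-classes of $\OD_n$ are in bijection with $P_1([n])$ via $X \mapsto L_X$, where $L_X = \{f_{Y,X}^{\alpha} \mid Y \in P([n]), \, |Y| = |X|, \, \alpha \in \{\mathbf{1},\mathbf{-1}\}\}$. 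Summing over all $\Jc$-classes $J_k$ (that is, over all possible cardinalities $k = |X|$) recovers the full list of $\Lc$-classes.

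Combining these two points, \thmref{Decomposition_Projective_modules} yields immediately that $\Bbbk L_X$ is projective for each $X \in P_1([n])$ and that the internal direct sum $\bigoplus_{X \in P_1([n])} \Bbbk L_X$ is isomorphic to $\Bbbk\OD_n$ as a left $\Bbbk\OD_n$-module, giving exactly the statement of the proposition.

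There is no serious obstacle here: the substantive content lies entirely in \lemref{Sandwitch_right_invertible}, where the upper-unitriangular block structure of the sandwich matrix was exploited. The present proposition is best viewed as a direct corollary, recorded separately so that it can be cited cleanly in the subsequent subsections, where the decomposition $\Bbbk\OD_n \simeq \bigoplus_X \Bbbk L_X$ is the starting point for describing the linear category of induced Sch\"utzenberger modules.
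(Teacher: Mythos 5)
Your proposal is correct and matches the paper exactly: the paper derives this proposition as an immediate consequence of Lemma \ref{lem:Sandwitch_right_invertible} and Theorem \ref{thm:Decomposition_Projective_modules}, with the $\Lc$-classes indexed by kernel sets in $P_{1}([n])$ just as you describe. No further comment is needed.
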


\subsection{\label{subsec:Construction-of-homomorphisms}Construction of homomorphisms}

For every $X,Y\in P_{1}([n])$ define $\widetilde{H}_{Y,X}=\{f_{W,X}^{\alpha}\in\OD_{n}\mid W\cup\{1\}=Y\}$.
Note that 
\[
\widetilde{H}_{Y,X}=\begin{cases}
\{f_{Y,X}^{{\bf 1}},f_{Y,X}^{{\bf -1}}\}=H_{Y,X} & |Y|=|X|\\
\{f_{Y\backslash\{1\},X}^{{\bf 1}},f_{Y\backslash\{1\},X}^{{\bf -1}}\}=H_{Y\backslash\{1\},X} & |Y|=|X|+1\\
\varnothing & |Y|\notin\left\{ |X|,|X|+1\right\} .
\end{cases}
\]
Every element $f_{W,Z}^{\alpha}$ is in precisely one set of the form
$\widetilde{H}_{Y,X}$ (namely, $X=Z$ and $Y=W\cup\{1\}$) so 
\[
{\displaystyle \sum_{X,Y\in P_{1}([n])}|\widetilde{H}_{Y,X}|}=|\OD_{n}|.
\]

\begin{rem}
If $\widetilde{H}_{Y,X}\neq\varnothing$ then it is an $\Ht$-class,
in the sense of \cite{Lawson1991}, where $E=\{e_{X}\mid X\in P_{1}([n])\}$.
\end{rem}

In this subsection we describe all homomorphisms between projective
modules of the form $\Bbbk L_{X}$ for $X\in P_{1}([n])$. In particular,
we show that $\dim_{\Bbbk}\Hom(\Bbbk L_{Y},\Bbbk L_{X})=|\widetilde{H}_{Y,X}|$
for every $X,Y\in P_{1}([n])$.

The easy case is to describe $\Hom(\Bbbk L_{Y},\Bbbk L_{X})$ where
$|X|=|Y|$. Recall that we denote by $\bullet$ the action of $\OD_{n}$
on $\Bbbk L$ where $L$ is an $\Lc$-class of $\OD_{n}$. Recall
also that $e_{X}=f_{X,X}^{{\bf 1}}$ for $X\in P_{1}([n])$. 
\begin{lem}
\label{lem:Rho_hom}Let $X,Y\in P_{1}([n])$ such that $|X|=|Y|$,
and let $\alpha\in\{{\bf 1},-{\bf 1}\}$. Then, the function $\mbox{\ensuremath{\rho_{Y,X}^{\alpha}:\Bbbk L_{Y}\to\Bbbk L_{X}}}$
defined on basis elements by $\rho_{Y,X}^{\alpha}(g)=g\bullet f_{Y,X}^{\alpha}$
is a non-zero homomorphism of $\Bbbk\OD_{n}$-modules.
\end{lem}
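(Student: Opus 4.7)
The plan is to verify the three substantive claims in order: (i) $\rho_{Y,X}^{\alpha}$ is well-defined as a linear map into $\Bbbk L_X$, (ii) it commutes with the $\Bbbk\OD_n$-action, and (iii) it is non-zero. The proof is essentially computational, leaning on the explicit multiplication formula $f_{W,Y}^{\beta}f_{Y,X}^{\alpha}=f_{W,X}^{\beta\alpha}$ for consecutive composable elements whose image/kernel-set match.

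\textbf{Step 1 (Well-definedness.)} Take an arbitrary basis element $g=f_{W,Y}^{\beta}\in L_Y$ (so $W\in P([n])$ with $|W|=|Y|$). Since $\im f_{Y,X}^{\alpha}=Y$ equals the kernel set of $f_{W,Y}^{\beta}$, a direct computation (checking the three types of intervals determined by $X$) yields
\[
f_{W,Y}^{\beta}\,f_{Y,X}^{\alpha}=f_{W,X}^{\beta\alpha},
\]
which has kernel set $X$ and so lies in $L_X$. Thus $g\bullet f_{Y,X}^{\alpha}=gf_{Y,X}^{\alpha}\in L_X$ without any truncation, and extending by linearity gives a well-defined $\Bbbk$-linear map $\rho_{Y,X}^{\alpha}\colon\Bbbk L_Y\to\Bbbk L_X$.

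\textbf{Step 2 (Module homomorphism.)} For any $s\in\OD_n$ and $g=f_{W,Y}^{\beta}\in L_Y$, associativity in $\OD_n$ gives $(sg)f_{Y,X}^{\alpha}=s(gf_{Y,X}^{\alpha})$. What has to be verified is that the truncations on the two sides agree, i.e.
\[
(s\bullet g)\bullet f_{Y,X}^{\alpha}=s\bullet\bigl(g\bullet f_{Y,X}^{\alpha}\bigr).
\]
The only possible discrepancy occurs when $sg\notin L_Y$ (so the left side is $0$); we must show that then $sgf_{Y,X}^{\alpha}\notin L_X$. This is the heart of the matter, but it follows from one key combinatorial observation: by Step~1, $gf_{Y,X}^{\alpha}=f_{W,X}^{\beta\alpha}$, which has the same image $W$ as $g$. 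A short analysis of $sf_{W,Y}^{\beta}$ and $sf_{W,X}^{\beta\alpha}$ (both are compositions of an order-preserving/reversing map with $s$) shows that the kernel-set of each equals the full prescribed kernel set precisely when $s$ restricts to an injective function on $W$. Hence
\[
sg\in L_Y\iff s|_W\text{ is injective}\iff sgf_{Y,X}^{\alpha}\in L_X,
\]
and the two sides of the displayed equation are both equal to $sgf_{Y,X}^{\alpha}$ in the injective case and both $0$ otherwise.

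\textbf{Step 3 (Non-vanishing.)} The idempotent $e_Y=f_{Y,Y}^{\mathbf{1}}$ belongs to $L_Y$, and by Step~1 we have $\rho_{Y,X}^{\alpha}(e_Y)=e_Yf_{Y,X}^{\alpha}=f_{Y,X}^{\alpha}$, a non-zero basis element of $\Bbbk L_X$. Hence $\rho_{Y,X}^{\alpha}\neq 0$.

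\textbf{Main obstacle.} The only non-routine point is the compatibility of truncations in Step~2, i.e.\ establishing the equivalence $sg\in L_Y\iff sgf_{Y,X}^{\alpha}\in L_X$. Everything else is associativity and the explicit product rule $f_{W,Y}^{\beta}f_{Y,X}^{\alpha}=f_{W,X}^{\beta\alpha}$. I anticipate that this equivalence is what the author will phrase most carefully, and I would highlight the common image $W=\im g=\im(gf_{Y,X}^{\alpha})$ as the conceptual reason both conditions reduce to injectivity of $s$ on $W$.
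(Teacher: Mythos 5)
Your proof is correct, but it takes a more hands-on route than the paper. The paper disposes of well-definedness in one line by noting that $\Lc$ is a right congruence (so $g\,\Lc\,e_{Y}$ forces $gf_{Y,X}^{\alpha}\,\Lc\,f_{Y,X}^{\alpha}\in L_{X}$), and then simply cites Green's lemma for the fact that right multiplication by an element of $L_{X}$ lying in the same $\Jc$-class induces a homomorphism of induced Sch\"utzenberger modules; it does not re-derive the compatibility of the truncations. You instead prove everything from scratch for $\OD_{n}$: the explicit product rule $f_{W,Y}^{\beta}f_{Y,X}^{\alpha}=f_{W,X}^{\beta\alpha}$ gives well-definedness, and you correctly isolate the one non-trivial point, namely $sg\in L_{Y}\iff sgf_{Y,X}^{\alpha}\in L_{X}$, and resolve it by observing that both conditions reduce to injectivity of $s$ on the common image $W=\im(g)=\im(gf_{Y,X}^{\alpha})$ (since the kernel of a composite $sh$ strictly contains $\ker(h)$ exactly when $s$ identifies two points of $\im(h)$). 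That equivalence is precisely the content of Green's lemma (together with stability) in this instance, so your argument is a self-contained combinatorial proof of the special case the paper imports as a black box. The paper's approach is shorter and generalizes to arbitrary finite regular monoids; yours buys transparency and an explicit criterion, at the cost of the interval-by-interval verification in Step 1, which you assert rather than carry out but which is indeed routine.
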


\begin{proof}
First note that if $g\in L_{Y}$ then by the fact that $\Lc$ is a
right congruence 
\[
g\,\Lc\,e_{Y}\implies gf_{Y,X}^{\alpha}\,\Lc\,e_{Y}f_{Y,X}^{\alpha}=f_{Y,X}^{\alpha}
\]
so 
\[
gf_{Y,X}^{\alpha}\in L_{X}.
\]
Therefore, a neater description of $\rho_{Y,X}^{\alpha}$ is $\rho_{Y,X}^{\alpha}(g)=gf_{Y,X}^{\alpha}$
and it is clearly a non-zero function. Since $L_{Y}$ and $L_{X}$
are in the same $\Jc$-class and $f_{Y,X}^{\alpha}\in L_{X}$, then
the fact that $\rho_{Y,X}^{\alpha}:\Bbbk L_{Y}\to\Bbbk L_{X}$ defined
by $\rho_{Y,X}^{\alpha}(g)=gf_{Y,X}^{\alpha}$ is a homomorphism of
$\Bbbk\OD_{n}$-modules follows from Green's lemma \cite[Lemma 2.2.1]{Howie1995},
and also stated explicitly in \cite[Lemma 3.5]{Stein2025}.
\end{proof}
\begin{cor}
\label{cor:Equal_size_dim_geq}If $X,Y\in P_{1}([n])$ with $|X|=|Y|$
then $\dim_{\Bbbk}\Hom(\Bbbk L_{Y},\Bbbk L_{X})\geq|\widetilde{H}_{Y,X}|$.
\end{cor}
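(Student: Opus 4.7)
The plan is to use Lemma~\ref{lem:Rho_hom} to exhibit, for each $\alpha \in \{\mathbf{1},-\mathbf{1}\}$, a nonzero homomorphism $\rho_{Y,X}^{\alpha} \in \Hom(\Bbbk L_Y,\Bbbk L_X)$, and then to show that these homomorphisms together span a subspace of dimension $|\widetilde{H}_{Y,X}|$. Since $|X|=|Y|$, we have $\widetilde{H}_{Y,X}=H_{Y,X}$, whose size is $2$ when $|Y|>1$ and $1$ when $|Y|=1$, so the goal reduces to showing that $\rho_{Y,X}^{\mathbf{1}}$ and $\rho_{Y,X}^{-\mathbf{1}}$ are linearly independent in the first case (and any nonzero one of them suffices in the second).

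The key step is to evaluate these homomorphisms on a well-chosen basis element of $\Bbbk L_Y$. The natural candidate is $e_Y = f_{Y,Y}^{\mathbf{1}} \in L_Y$, because the identity $e_Y f_{Y,X}^{\alpha} = f_{Y,X}^{\alpha}$ (valid since $Y\subseteq Y$) recorded just after the definition of $e_X$ gives
\[
\rho_{Y,X}^{\alpha}(e_Y) = e_Y \bullet f_{Y,X}^{\alpha} = f_{Y,X}^{\alpha}.
\]
Thus $\rho_{Y,X}^{\mathbf{1}}$ and $\rho_{Y,X}^{-\mathbf{1}}$ send $e_Y$ to the two distinct basis elements $f_{Y,X}^{\mathbf{1}}$ and $f_{Y,X}^{-\mathbf{1}}$ of $\Bbbk L_X$ whenever $|Y|>1$. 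Linear independence of these images in $\Bbbk L_X$ then forces linear independence of $\rho_{Y,X}^{\mathbf{1}}$ and $\rho_{Y,X}^{-\mathbf{1}}$ in $\Hom(\Bbbk L_Y,\Bbbk L_X)$, yielding the bound $\dim_{\Bbbk}\Hom(\Bbbk L_Y,\Bbbk L_X)\geq 2 = |\widetilde{H}_{Y,X}|$.

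In the remaining case $|Y|=|X|=1$, one has $f_{Y,X}^{\mathbf{1}}=f_{Y,X}^{-\mathbf{1}}$ and $|\widetilde{H}_{Y,X}|=1$, and the single nonzero homomorphism supplied by Lemma~\ref{lem:Rho_hom} already gives the required bound. There is no serious obstacle here; the whole argument is a direct application of Lemma~\ref{lem:Rho_hom} combined with the observation that evaluation at the idempotent $e_Y$ distinguishes the two homomorphisms. The sharper content, namely that equality holds and that the analogous statement for $|Y|=|X|+1$ behaves correctly, will presumably be treated in the subsequent results of Subsection~\ref{subsec:Construction-of-homomorphisms}.
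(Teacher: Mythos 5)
Your proposal is correct and follows essentially the same route as the paper: apply Lemma \ref{lem:Rho_hom}, evaluate at the idempotent $e_{Y}$ to see that $\rho_{Y,X}^{{\bf 1}}$ and $\rho_{Y,X}^{{\bf -1}}$ hit the distinct basis elements $f_{Y,X}^{{\bf 1}}$ and $f_{Y,X}^{{\bf -1}}$ of $\Bbbk L_{X}$, and conclude linear independence when $|Y|>1$, with the $|Y|=1$ case handled by a single non-zero homomorphism. No issues.
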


\begin{proof}
If $|Y|=|X|=1$ then $\rho_{Y,X}^{{\bf 1}}\in\Hom(\Bbbk L_{Y},\Bbbk L_{X})$
is a non-zero homomorphism so 
\[
\dim_{\Bbbk}\Hom(\Bbbk L_{Y},\Bbbk L_{X})\geq1=|\widetilde{H}_{Y,X}|.
\]
If $|Y|=|X|>1$ then $\rho_{Y,X}^{{\bf 1}},\rho_{Y,X}^{{\bf -1}}\in\Hom(\Bbbk L_{Y},\Bbbk L_{X})$
are linearly independent homomorphisms. Indeed, 
\[
\alpha\rho_{Y,X}^{{\bf 1}}+\beta\rho_{Y,X}^{{\bf -1}}=0
\]
implies 
\[
\alpha\rho_{Y,X}^{{\bf 1}}(e_{Y})+\beta\rho_{Y,X}^{{\bf -1}}(e_{Y})=0
\]
so
\[
\alpha e_{Y}f_{Y,X}^{{\bf 1}}+\beta e_{Y}f_{Y,X}^{{\bf -1}}=0.
\]
Therefore, 
\[
\alpha f_{Y,X}^{{\bf 1}}+\beta f_{Y,X}^{{\bf -1}}=0
\]
so $\alpha=\beta=0$ as $f_{Y,X}^{{\bf 1}}$ and $f_{Y,X}^{{\bf -1}}$
are linearly independent elements of $\Bbbk L_{X}$. This establishes
$\dim_{\Bbbk}\Hom(\Bbbk L_{Y},\Bbbk L_{X})\geq2=|\widetilde{H}_{Y,X}|$.
\end{proof}
The more complicated case is when $X,Y\in P_{1}([n])$, $|X|=k$ and
$|Y|=k+1$. Set $Y=\{y_{1},\ldots,y_{k},y_{k+1}\}$ and $Y_{i}=Y\backslash\{y_{i}\}$.
Let $d_{Y,X}^{{\bf 1}}$ be the linear combination
\[
d_{Y,X}^{{\bf 1}}=\sum_{i=1}^{k+1}(-1)^{i+1}f_{Y_{i},X}^{{\bf 1}}
\]
and note that $d_{Y,X}^{{\bf 1}}\in\Bbbk L_{X}$.
\begin{lem}
\label{lem:Annihilate_d}Let $s\in\OD_{n}$. If $|\im(s)|\leq k$,
then $s\bullet d_{Y,X}^{{\bf 1}}=0$.
\end{lem}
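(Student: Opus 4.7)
The plan is to unwind the action $s \bullet f_{Y_i,X}^{\bf 1}$ from the definition of the module $\Bbbk L_X$ and analyze which terms of $d_{Y,X}^{\bf 1}$ survive. By definition, $s \bullet f_{Y_i,X}^{\bf 1}$ equals $s f_{Y_i,X}^{\bf 1}$ if this product lies in $L_X$ and is $0$ otherwise. Since $\im f_{Y_i,X}^{\bf 1} = Y_i$, the kernel partition of $s f_{Y_i,X}^{\bf 1}$ coincides with that of $f_{Y_i,X}^{\bf 1}$ iff $s$ is injective on $Y_i$; in that case $s f_{Y_i,X}^{\bf 1}$ has kernel set $X$, image $s(Y_i)$, and the same sign $\alpha \in \{{\bf 1},{\bf -1}\}$ as $s$ (since $f_{Y_i,X}^{\bf 1}$ is order-preserving).

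If $|\im s| < k$, then $|s(Y_i)| \leq |\im s| < k = |Y_i|$ for every $i$, so $s|_{Y_i}$ is never injective and every summand vanishes. Suppose then $|\im s| = k$. The key structural input is that since $s$ is monotone, the equivalence relation on $\{1, \ldots, k+1\}$ defined by $i \sim j \iff s(y_i) = s(y_j)$ has classes that are consecutive intervals: if $a < b < c$ and $s(y_a) = s(y_c)$, then also $s(y_b) = s(y_a)$. If $|s(Y)| < k$, then $|s(Y_i)| < k$ for all $i$, so once more every summand vanishes. Otherwise $|s(Y)| = k$, and since $|Y| = k+1$, exactly one index $a$ satisfies $s(y_a) = s(y_{a+1})$, all other values $s(y_j)$ being pairwise distinct and distinct from $s(y_a)$. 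A direct check then shows that $s|_{Y_i}$ is injective precisely for $i \in \{a, a+1\}$.

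For these two indices, the product $s f_{Y_i,X}^{\bf 1}$ has kernel set $X$, image $\im s$, and the same sign $\alpha$ as $s$; but such a triple (kernel set, image, sign) pins down a unique element of $\OD_n$ (the exceptional identification $f^{\bf 1} = f^{-{\bf 1}}$ occurs only for constants, which is harmless here). Hence both products coincide with the single element $f^{\alpha}_{\im s, X}$, and the contribution to $s \bullet d_{Y,X}^{\bf 1}$ collapses to
\[
\bigl((-1)^{a+1} + (-1)^{a+2}\bigr)\, f^{\alpha}_{\im s, X} = 0.
\]
The main obstacle is the monotonicity argument that forces the fiber structure of $s|_Y$ to consist of consecutive intervals, with at most a single pair $\{a,a+1\}$ collapsed once we enforce $|s(Y)| = k$; after this combinatorial reduction, the cancellation between the two surviving alternating signs is automatic.
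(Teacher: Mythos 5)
Your proof is correct and follows essentially the same route as the paper: reduce to the case $|s(Y)|=k$, use monotonicity of $s$ to locate the unique adjacent pair $y_a,y_{a+1}$ with equal image, and observe that the only two surviving terms $sf_{Y_a,X}^{\mathbf 1}=sf_{Y_{a+1},X}^{\mathbf 1}$ carry opposite signs and cancel. Your version merely spells out in more detail why the surviving products coincide (via kernel set, image, and orientation) where the paper asserts the equality directly.
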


\begin{proof}
We know that $\im(se_{Y})\subseteq\im(s)$ so $|\im(se_{Y})|\leq k$.
If $|\im(se_{Y})|<k$, then clearly $|\im(sf_{Y_{i},X}^{{\bf 1}})|=|\im(se_{Y}f_{Y_{i},X}^{{\bf 1}})|<k$
for every $i$. Hence, $sf_{Y_{i},X}^{{\bf 1}}\notin L_{X}$ and $s\bullet d_{Y,X}^{{\bf 1}}=0$
in this case. If |$\im(se_{Y})|=k$ then there exists a unique $j\in[k]$
such that $s(y_{j})=s(y_{j+1})$. In this case, $|\im(sf_{Y_{i},X})|<k$
for $i\notin\{j,j+1\}$, so $s\bullet f_{Y_{i},X}^{{\bf 1}}=0$ whenever
$i\notin\{j,j+1\}$. Finally, $sf_{Y_{j},X}=sf_{Y_{j+1},X}$ and they
have a different sign in $d_{Y,X}^{{\bf 1}}$ so we conclude that
$s\bullet d_{Y,X}^{{\bf 1}}=0$ as required.
\end{proof}
\begin{lem}
\label{lem:Delta_hom}The function $\delta_{Y,X}^{{\bf 1}}:\Bbbk L_{Y}\to\Bbbk L_{X}$
defined by $\delta_{Y,X}^{{\bf 1}}(g)=g\bullet d_{Y,X}^{{\bf 1}}$
is a non-zero homomorphism of $\Bbbk\OD_{n}$-modules.
\end{lem}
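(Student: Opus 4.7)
The plan is to verify separately that $\delta_{Y,X}^{{\bf 1}}$ is a well-defined $\Bbbk\OD_{n}$-module homomorphism and that it is non-zero. The key conceptual move is to recognize that $\delta_{Y,X}^{{\bf 1}}$ is literally the map ``left-act on the fixed vector $d_{Y,X}^{{\bf 1}}$'': extending linearly, $\delta_{Y,X}^{{\bf 1}}(g) = g \bullet d_{Y,X}^{{\bf 1}}$ where $g \in L_Y \subseteq \OD_n$ acts on $d_{Y,X}^{{\bf 1}} \in \Bbbk L_X$ via the $\Bbbk\OD_n$-module structure on $\Bbbk L_X$. To see each summand really lands in $L_X$ rather than vanishing, I would note that $g \in L_Y$ has kernel set $Y$, hence $g$ is injective on $Y$ and therefore on every $Y_i \subseteq Y$; combined with the right-congruence property of $\Lc$ (as already exploited in \lemref{Rho_hom}), this gives $g f_{Y_i,X}^{{\bf 1}} \in L_X$ for each $i$.

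The homomorphism property $\delta_{Y,X}^{{\bf 1}}(s \bullet g) = s \bullet \delta_{Y,X}^{{\bf 1}}(g)$ for $s \in \OD_n$ and $g \in L_Y$ will follow from the module-associativity identity $s \bullet (g \bullet d_{Y,X}^{{\bf 1}}) = (sg) \bullet d_{Y,X}^{{\bf 1}}$ in $\Bbbk L_X$, split into two cases. If $sg \in L_Y$, then $s \bullet g = sg$ and both sides of the required identity are equal to $(sg) \bullet d_{Y,X}^{{\bf 1}}$, so there is nothing further to check. If $sg \notin L_Y$, then $s \bullet g = 0$ and the left side vanishes; meanwhile $|\im(sg)| < |\im(g)| = k+1$ forces $|\im(sg)| \leq k$, and then \lemref{Annihilate_d} gives $(sg) \bullet d_{Y,X}^{{\bf 1}} = 0$ as well. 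This second case is the only delicate point, and it is precisely why \lemref{Annihilate_d} was set up beforehand.

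Finally, for non-vanishing I would evaluate at the idempotent $e_Y \in L_Y$. Because $Y_i \subseteq Y$ we have $e_Y f_{Y_i,X}^{{\bf 1}} = f_{Y_i,X}^{{\bf 1}}$ for every $i$, so $\delta_{Y,X}^{{\bf 1}}(e_Y) = d_{Y,X}^{{\bf 1}}$. The elements $f_{Y_1,X}^{{\bf 1}}, \ldots, f_{Y_{k+1},X}^{{\bf 1}}$ are distinct basis vectors of $\Bbbk L_X$ (they have pairwise distinct images $Y_i$), so $d_{Y,X}^{{\bf 1}} \neq 0$. I do not expect any real obstacle beyond cleanly organizing the case distinction in the homomorphism check; the whole statement is essentially a bookkeeping consequence of \lemref{Annihilate_d} once the right reformulation of $\delta_{Y,X}^{{\bf 1}}$ as a left action on a single vector is made.
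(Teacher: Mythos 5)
Your proposal is correct and follows essentially the same route as the paper's proof: reformulate $\delta_{Y,X}^{{\bf 1}}$ as right multiplication by the fixed vector $d_{Y,X}^{{\bf 1}}$, use the right-congruence property of $\Lc$ to see that each $gf_{Y_i,X}^{{\bf 1}}$ lies in $L_X$, split the homomorphism check into the cases $sg\in L_Y$ and $sg\notin L_Y$ with \lemref{Annihilate_d} handling the latter, and evaluate at $e_Y$ for non-vanishing. The one step you assert without justification is that $sg\notin L_Y$ forces $|\im(sg)|<|\im(g)|$; the paper derives this from stability of finite monoids (if $|\im(sg)|=k+1$ then $sg\,\Jc\,g$ together with $sg\leq_{\Lc}g$ gives $sg\,\Lc\,g$, a contradiction), and you should supply that one line, though the claim itself is true and the omission does not affect the overall correctness of your argument.
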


\begin{proof}
First note that $e_{Y}f_{Y_{i},X}^{{\bf 1}}=f_{Y_{i},X}^{{\bf 1}}$
for every $i\geq1$ since $Y_{i}\subseteq Y$. Therefore $\delta_{Y,X}^{{\bf 1}}(e_{Y})=d_{Y,X}^{{\bf 1}}$
so $\delta_{Y,X}^{{\bf 1}}$ is non-zero. Moreover, if $g\Lc e_{Y}$
then $gf_{Y_{i},X}^{{\bf 1}}\,\Lc\,e_{Y}f_{Y_{i},X}^{{\bf 1}}=f_{Y_{i},X}^{{\bf 1}}\in L_{X}$
so 
\[
gd_{Y,X}^{{\bf 1}}=\sum_{i=1}^{k+1}(-1)^{i+1}gf_{Y_{i},X}^{{\bf 1}}\in\Bbbk L_{X}
\]
for every $g\in L_{Y}$. Therefore, we can describe $\delta_{Y,X}^{{\bf 1}}$
by 
\[
\delta_{Y,X}^{{\bf 1}}(g)=gd_{Y,X}^{{\bf 1}}=\sum_{i=1}^{k+1}(-1)^{i+1}gf_{Y_{i},X}^{{\bf 1}}.
\]
It is left to show that $\delta_{Y,X}^{{\bf 1}}:\Bbbk L_{Y}\to\Bbbk L_{X}$
is a homomorphism of $\Bbbk\OD_{n}$-modules. Let $s\in\OD_{n}$,
$g\in L_{Y}$. We need to prove that 
\[
\delta_{Y,X}^{{\bf 1}}(s\bullet g)=s\bullet\delta_{Y,X}^{{\bf 1}}(g).
\]
If $sg\in L_{Y}$ then $\delta_{Y,X}^{{\bf 1}}(s\bullet g)=\delta_{Y,X}^{{\bf 1}}(sg)$.
Now, 
\[
s\bullet\delta_{Y,X}^{{\bf 1}}(g)=s\bullet gd_{Y,X}^{{\bf 1}}=s\bullet(g\bullet d_{Y,X}^{{\bf 1}})=sg\bullet d_{Y,X}^{{\bf 1}}=\delta_{Y,X}^{{\bf 1}}(sg)
\]
so $\delta_{Y,X}^{{\bf 1}}(s\bullet g)=s\bullet\delta_{Y,X}^{{\bf 1}}(g)$
in this case. The other case is when $sg\notin L_{Y}$ so $\delta_{Y,X}^{{\bf 1}}(s\bullet g)=\delta_{Y,X}^{{\bf 1}}(0)=0$.
We need to prove that $s\bullet\delta_{Y,X}^{{\bf 1}}(g)=s\bullet(gd_{Y,X}^{{\bf 1}})$
is zero as well. Note that $s\bullet(gd_{Y,X}^{{\bf 1}})=s\bullet(g\bullet d_{Y,X}^{{\bf 1}})=sg\bullet d_{Y,X}^{{\bf 1}}$,
and $|\im(sg)|\leq|\im(g)|=k+1$. If $|\im(sg)|=k+1$, then $sg\Jc g$,
and stability of finite monoids \cite[Theorem 1.13]{Steinberg2016}
implies $sg\in L_{Y}$, which is a contradiction. Therefore, $|\im(sg)|\leq k$
so $sg\bullet d_{Y,X}^{{\bf 1}}=0$ by \lemref{Annihilate_d}. This
finishes the proof.
\end{proof}
Let $d_{Y,X}^{{\bf -1}}$ be the linear combination
\[
d_{Y,X}^{{\bf -1}}=\sum_{i=1}^{k+1}(-1)^{i+1}f_{Y_{i},X}^{{\bf -1}},
\]
and note that $d_{Y,X}^{{\bf -1}}\in\Bbbk L_{X}$ and $d_{Y,X}^{{\bf 1}}f_{X,X}^{{\bf -1}}=d_{Y,X}^{{\bf -1}}$.
Since $\delta_{Y,X}^{{\bf 1}}:\Bbbk L_{Y}\to\Bbbk L_{X}$ and $\mbox{\ensuremath{\rho_{X,X}^{-{\bf 1}}:\Bbbk L_{X}\to\Bbbk L_{X}}}$
are both homomorphisms of $\Bbbk\OD_{n}$-modules we can define another
homomorphism $\mbox{\ensuremath{\delta_{Y,X}^{{\bf -1}}:\Bbbk L_{Y}\to\Bbbk L_{X}}}$
by $\delta_{Y,X}^{{\bf -1}}=\rho_{X,X}^{-{\bf 1}}\delta_{Y,X}^{{\bf 1}}$.
Note that for every $g\in L_{Y}$
\[
\delta_{Y,X}^{{\bf -1}}(g)=\rho_{X,X}^{-{\bf 1}}\delta_{Y,X}^{{\bf 1}}(g)=\rho_{X,X}^{-{\bf 1}}(gd_{Y,X}^{{\bf 1}})=gd_{Y,X}^{{\bf 1}}f_{X,X}^{{\bf -1}}=gd_{Y,X}^{{\bf -1}}.
\]

\begin{cor}
\label{cor:Non_equal_size_dim_eq}If $X,Y\in P_{1}([n])$ with $|Y|=|X|+1$
then $\dim_{\Bbbk}\Hom(\Bbbk L_{Y},\Bbbk L_{X})\geq|\widetilde{H}_{Y,X}|$.
\end{cor}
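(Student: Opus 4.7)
The plan is to exhibit explicit homomorphisms in $\Hom(\Bbbk L_{Y},\Bbbk L_{X})$ using the building blocks already constructed. Set $k=|X|$, so $|Y|=k+1$ and, by the formula following the definition of $\widetilde{H}_{Y,X}$, $|\widetilde{H}_{Y,X}|=|H_{Y\setminus\{1\},X}|$, which equals $1$ when $k=1$ and $2$ when $k>1$.

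If $k=1$, the non-zero homomorphism $\delta_{Y,X}^{{\bf 1}}$ from \lemref{Delta_hom} alone gives the required bound $\dim_{\Bbbk}\Hom(\Bbbk L_{Y},\Bbbk L_{X})\geq 1=|\widetilde{H}_{Y,X}|$.

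If $k>1$, I would show that $\delta_{Y,X}^{{\bf 1}}$ and $\delta_{Y,X}^{-{\bf 1}}$ are linearly independent, mirroring the argument in the proof of \corref{Equal_size_dim_geq}. Suppose $\alpha\delta_{Y,X}^{{\bf 1}}+\beta\delta_{Y,X}^{-{\bf 1}}=0$. Evaluating at $e_{Y}\in L_{Y}$ and using $e_{Y}f_{Y_i,X}^{\gamma}=f_{Y_i,X}^{\gamma}$ (since $Y_i\subseteq Y$), we obtain the identity
\[
\alpha\sum_{i=1}^{k+1}(-1)^{i+1}f_{Y_i,X}^{{\bf 1}}+\beta\sum_{i=1}^{k+1}(-1)^{i+1}f_{Y_i,X}^{-{\bf 1}}=0
\]
in $\Bbbk L_{X}$. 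Since each $|Y_i|=k>1$, the elements $f_{Y_i,X}^{{\bf 1}}$ and $f_{Y_i,X}^{-{\bf 1}}$ are distinct; moreover the sets $Y_i=Y\setminus\{y_i\}$ are pairwise distinct subsets of $[n]$, so the resulting $2(k+1)$ functions are pairwise distinct basis elements of $\Bbbk L_{X}$. Hence $\alpha=\beta=0$, as required.

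The main step is conceptually just a book-keeping check that the basis elements of $\Bbbk L_{X}$ appearing in the two alternating sums $d_{Y,X}^{{\bf 1}}$ and $d_{Y,X}^{-{\bf 1}}$ are pairwise distinct; there is no genuine obstacle once the homomorphisms $\delta_{Y,X}^{\pm{\bf 1}}$ from \lemref{Delta_hom} and the convention $\delta_{Y,X}^{-{\bf 1}}=\rho_{X,X}^{-{\bf 1}}\delta_{Y,X}^{{\bf 1}}$ are in place.
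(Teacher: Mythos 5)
Your proposal is correct and follows essentially the same route as the paper: exhibit $\delta_{Y,X}^{{\bf 1}}$ (and $\delta_{Y,X}^{-{\bf 1}}$ when $|X|>1$), evaluate a vanishing linear combination at $e_{Y}$, and use that the $2(k+1)$ elements $f_{Y_{i},X}^{\pm{\bf 1}}$ are distinct basis elements of $\Bbbk L_{X}$. The only difference is that you spell out the distinctness check that the paper merely asserts.
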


\begin{proof}
If $|X|=1$ then $\delta_{Y,X}^{{\bf 1}}\in\Hom(L_{Y},L_{X})$ is
a non-zero homomorphism so 
\[
\dim_{\Bbbk}\Hom(\Bbbk L_{Y},\Bbbk L_{X})\geq1=|\widetilde{H}_{Y,X}|.
\]
If $|X|=k>1$ then $\delta_{Y,X}^{{\bf 1}},\delta_{Y,X}^{{\bf -1}}\in\Hom(L_{Y},L_{X})$
are linearly independent homomorphisms. Indeed, 
\[
\alpha\delta_{Y,X}^{{\bf 1}}+\beta\delta_{Y,X}^{{\bf -1}}=0
\]
implies 
\[
\alpha\delta_{Y,X}^{{\bf 1}}(e_{Y})+\beta\delta_{Y,X}^{{\bf -1}}(e_{Y})=0
\]
so
\[
\alpha e_{Y}d_{Y,X}^{{\bf 1}}+\beta e_{Y}d_{Y,X}^{{\bf -1}}=0.
\]
Therefore, 
\[
\alpha d_{Y,X}^{{\bf 1}}+\beta d_{Y,X}^{{\bf -1}}=0
\]
which says 
\[
\alpha\sum_{i=1}^{k+1}(-1)^{i+1}f_{Y_{i},X}^{{\bf 1}}+\beta\sum_{i=1}^{k+1}(-1)^{i+1}f_{Y_{i},X}^{{\bf -1}}=0.
\]
As $\{f_{Y_{i},X}^{{\bf 1}},f_{Y_{i},X}^{{\bf -1}}\mid i=1,\ldots,k+1\}$
is a linearly independent set of $\Bbbk L_{X}$, this shows that $\alpha=\beta=0$
and establishes $\dim_{\Bbbk}\Hom(\Bbbk L_{Y},\Bbbk L_{X})\geq2=|\widetilde{H}_{Y,X}|$.
\end{proof}
\begin{prop}
\label{prop:Dim_of_hom}For every $X,Y\in P_{1}([n])$ we have $\dim_{\Bbbk}\Hom(\Bbbk L_{Y},\Bbbk L_{X})=|\widetilde{H}_{Y,X}|$.
\end{prop}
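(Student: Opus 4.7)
The plan is to combine the two lower bounds already established in Corollaries \ref{cor:Equal_size_dim_geq} and \ref{cor:Non_equal_size_dim_eq} with a global dimension count, using the Peirce-style decomposition from Proposition \ref{prop:Peirce_decomposition}.

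First, recall that Proposition \ref{prop:Peirce_decomposition} gives the decomposition
\[
\Bbbk\OD_{n}\simeq\bigoplus_{X\in P_{1}([n])}\Bbbk L_{X}
\]
as left $\Bbbk\OD_{n}$-modules. Taking endomorphism algebras, and using that $\End_{\Bbbk\OD_{n}}(\Bbbk\OD_{n})\simeq(\Bbbk\OD_{n})^{\op}$ as $\Bbbk$-algebras, I get
\[
|\OD_{n}|=\dim_{\Bbbk}\End_{\Bbbk\OD_{n}}(\Bbbk\OD_{n})=\sum_{X,Y\in P_{1}([n])}\dim_{\Bbbk}\Hom_{\Bbbk\OD_{n}}(\Bbbk L_{Y},\Bbbk L_{X}).
\]

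Next, I bound each term from below. By Corollary \ref{cor:Equal_size_dim_geq} and Corollary \ref{cor:Non_equal_size_dim_eq}, the inequality $\dim_{\Bbbk}\Hom(\Bbbk L_{Y},\Bbbk L_{X})\geq|\widetilde{H}_{Y,X}|$ already holds whenever $|Y|\in\{|X|,|X|+1\}$. For any other pair $(X,Y)$ the definition forces $|\widetilde{H}_{Y,X}|=0$, so the inequality is trivially true there as well. Summing over all pairs and using the identity ${\displaystyle \sum_{X,Y\in P_{1}([n])}|\widetilde{H}_{Y,X}|}=|\OD_{n}|$ noted at the start of the subsection, I obtain
\[
|\OD_{n}|=\sum_{X,Y}\dim_{\Bbbk}\Hom(\Bbbk L_{Y},\Bbbk L_{X})\geq\sum_{X,Y}|\widetilde{H}_{Y,X}|=|\OD_{n}|.
\]
Equality of the outer ends forces equality termwise, which is exactly the proposition.

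There is essentially no obstacle, since the two main ingredients (the direct-sum decomposition and the two lower bounds on individual hom-spaces) are already in hand; the only thing to be careful about is to note that the ``trivial'' cases $|Y|\notin\{|X|,|X|+1\}$ need no extra argument, because the counting identity automatically forces the corresponding hom-spaces to vanish once the non-trivial bounds are tight.
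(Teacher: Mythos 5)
Your proposal is correct and follows essentially the same route as the paper's proof: the global count $\dim_{\Bbbk}\End_{\Bbbk\OD_{n}}(\Bbbk\OD_{n})=|\OD_{n}|$ via the decomposition of Proposition \ref{prop:Peirce_decomposition}, combined with the termwise lower bounds from Corollaries \ref{cor:Equal_size_dim_geq} and \ref{cor:Non_equal_size_dim_eq} and the identity $\sum_{X,Y}|\widetilde{H}_{Y,X}|=|\OD_{n}|$, forces equality in every summand. No gaps.
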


\begin{proof}
According to \corref{Equal_size_dim_geq} and \corref{Non_equal_size_dim_eq}
we know that $\dim_{\Bbbk}\Hom(\Bbbk L_{Y},\Bbbk L_{X})\geq|\widetilde{H}_{Y,X}|$
if $|Y|=|X|$ or $|Y|=|X|+1$. In all other cases, $|\widetilde{H}_{Y,X}|=0$,
so $\dim_{\Bbbk}\Hom(L_{Y},L_{X})\geq|\widetilde{H}_{Y,X}|$ holds
trivially. As noted above, we know that 
\[
\sum_{X,Y\in P_{1}([n])}|\widetilde{H}_{Y,X}|=|\OD_{n}|.
\]

Moreover, $\Bbbk\OD_{n}\simeq{\displaystyle \bigoplus_{X\in P_{1}([n])}}\Bbbk L_{X}$
and since $A^{\op}\simeq\Hom(A,A)$ for every algebra $A$ we obtain
an isomorphism of $\Bbbk$-vector spaces
\begin{align*}
\Bbbk\OD_{n}^{\op} & \simeq\Hom(\Bbbk\OD_{n},\Bbbk\OD_{n})\simeq\Hom\left({\displaystyle \bigoplus_{Y\in P_{1}([n])}}\Bbbk L_{Y},{\displaystyle \bigoplus_{X\in P_{1}([n])}}\Bbbk L_{X}\right)\\
 & \simeq\bigoplus_{X,Y\in P_{1}([n])}\Hom(\Bbbk L_{Y},\Bbbk L_{X}).
\end{align*}
Therefore, 
\[
|\OD_{n}|=\dim_{\Bbbk}\Bbbk\OD_{n}^{\op}=\sum_{X,Y\in P_{1}([n])}\dim_{\Bbbk}\Hom(\Bbbk L_{Y},\Bbbk L_{X})\geq\sum_{X,Y\in P_{1}([n])}|\widetilde{H}_{Y,X}|=|\OD_{n}|
\]
so it must be the case that 
\[
\dim_{\Bbbk}\Hom(\Bbbk L_{Y},\Bbbk L_{X})=|\widetilde{H}_{Y,X}|
\]
for every $X,Y\in P_{1}([n])$, as required.
\end{proof}
\begin{rem}
\propref{Dim_of_hom} is similar to other known results. Let $S$
be a finite semigroup and let $E\subseteq S$ be a subset of idempotents.
The set $E$ is partially ordered by the natural partial order on
idempotents: $e\leq f$ if and only if $ef=e=fe$. Assume that for
every $a\in S$ its set of right identities from $E$ has a minimum,
and its set of left identities from $E$ has a minimum. For $e,f\in E$
we set $\widetilde{H}_{e,f}$ to be the set of all elements $a\in S$
such that $e$ is their minimum left identity from $E$ and $f$ is
their minimum right identity from $E$ (if non-empty, this is an $\Ht$-class
\cite{Lawson1991}). We can define an equivalence relation $\Lt$
by $a\Lt b$ if $ae=a\iff be=b$ for every $e\in E$. If $e\in E$
and $\Lt(e)$ is the $\Lt$-class of $e$ then it is often the case
that $\Bbbk\Lt(e)$ is a projective $\Bbbk S$-module. In many cases
the dimension $\dim_{\Bbbk}\Hom(\Lt(e),\Lt(f))$ is precisely $|\widetilde{H}_{e,f}|$.
This holds for $\Jc$-trivial monoids \cite[Theorem 3.20]{Denton2011},
a special class of reduced $E$-Fountain semigroups \cite[Corollary 3.12]{Stein2025}
and $E$-semiadequate semigroups (yet unpublished). For $S=\OD_{n}$
we can choose $E=\{e_{X}\mid X\in P_{1}([n])\}$. Then, $\Lt=\Lc$
and $\widetilde{H}_{Y,X}=\widetilde{H}_{e_{Y},e_{X}}$ for $X,Y\in P_{1}([n])$.
Thus, $\OD_{n}$ provides another instance where the dimension of
homomorphisms between projective modules associated with $\Lt$-classes
is determined by the cardinality of $\Ht$-classes.
\end{rem}

Let $\mathcal{LC}_{n}$ denote the linear category whose objects are
the modules of the form $\Bbbk L_{X}$ for $X\in P_{1}([n])$, and
the morphisms are all the homomorphisms between them. As stated in
the preliminaries, the $\Bbbk$-algebra $\mathscr{A}[\mathcal{LC}_{n}]$
is isomorphic to $\Bbbk\OD_{n}^{\op}$, and therefore we can switch
our attention to finding a quiver presentation for the algebra of
$\mathcal{LC}_{n}^{\op}$. For convenience, we will initially continue
working with $\mathcal{LC}_{n}$ and reverse the morphisms at a later
stage.

\subsection{\label{subsec:The-skeletal-category}The skeletal category}

A category is called \emph{skeletal} if no two distinct objects are
isomorphic. Any category is equivalent to a skeletal category (which
is unique up to isomorphism) called its \emph{skeleton}. The skeleton
of a category $\mathcal{C}$ is the full subcategory consisting of
one object from every isomorphism class of $\mathcal{C}$. It is well-known
that algebras of equivalent categories are Morita equivalent \cite[Proposition 2.2]{Webb2007},
so the algebra of a category and the algebra of its skeleton have
the same quiver presentation. In this subsection we will describe
the skeleton of $\mathcal{LC}_{n}$, which we denote by $\mathcal{SL}_{n}$,
and in the rest of this section, we will seek a quiver presentation
for the algebra of $\mathcal{SL}_{n}$. To describe the skeleton,
we must determine which objects of $\mathcal{LC}_{n}$ are isomorphic.
This is not difficult with the information we already have.
\begin{lem}
The $\Bbbk\OD_{n}$-module $\Bbbk L_{X}$ is isomorphic to $\Bbbk L_{Y}$
if and only if $|X|=|Y|$.
\end{lem}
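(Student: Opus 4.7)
The plan is to handle the two directions separately, using Proposition \ref{prop:Dim_of_hom} as the main input.

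For the ``if'' direction, when $|X|=|Y|$, I will exhibit mutually inverse isomorphisms. The natural candidates are the homomorphisms $\rho_{Y,X}^{\bf 1}:\Bbbk L_Y\to\Bbbk L_X$ and $\rho_{X,Y}^{\bf 1}:\Bbbk L_X\to\Bbbk L_Y$ constructed in \lemref{Rho_hom}. Using the identity $f_{Z,Y}^{\beta}f_{Y,X}^{\alpha}=f_{Z,X}^{\beta\alpha}$, their composition applied to $g\in L_X$ gives $gf_{X,Y}^{\bf 1}f_{Y,X}^{\bf 1}=ge_X$, and since every $g\in L_X$ satisfies $ge_X=g$ (because $X$ is its kernel set), the composition is the identity on $\Bbbk L_X$. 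The symmetric computation shows the other composition is the identity on $\Bbbk L_Y$, so the two modules are isomorphic.

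For the ``only if'' direction, suppose $\Bbbk L_X\simeq\Bbbk L_Y$. Then there must exist non-zero homomorphisms in both directions, so by \propref{Dim_of_hom} both $\widetilde H_{Y,X}$ and $\widetilde H_{X,Y}$ are non-empty. From the description of $\widetilde H_{Y,X}$ recalled at the start of \subsecref{Construction-of-homomorphisms}, non-emptiness forces $|Y|\in\{|X|,|X|+1\}$, and symmetrically $|X|\in\{|Y|,|Y|+1\}$. Combining these two inequalities yields $|X|=|Y|$.

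I do not expect any real obstacle here: the ``if'' part is a short direct computation once one trusts the composition rule for the $f_{Y,X}^{\alpha}$ notation, and the ``only if'' part is an immediate cardinality consequence of \propref{Dim_of_hom} together with the case analysis describing when $\widetilde H_{Y,X}$ is non-empty. The only mild care needed is to verify that $ge_X=g$ for every $g\in L_X$, which follows from $g\,\Lc\,e_X$ and the idempotency of $e_X$.
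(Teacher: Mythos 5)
Your proof is correct and follows essentially the same route as the paper: the ``if'' direction uses the mutually inverse homomorphisms $\rho_{Y,X}^{{\bf 1}}$ and $\rho_{X,Y}^{{\bf 1}}$ with the same composition computation $gf_{X,Y}^{{\bf 1}}f_{Y,X}^{{\bf 1}}=ge_{X}=g$, and the ``only if'' direction is the same appeal to Proposition~\ref{prop:Dim_of_hom} and the vanishing of $\widetilde{H}_{Y,X}$ when $|Y|\notin\{|X|,|X|+1\}$ (the paper just picks the direction with zero hom-space outright rather than intersecting the two constraints).
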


\begin{proof}
If $|Y|\neq|X|$ then we can assume without loss of generality that
$|Y|<|X|$. According to \propref{Dim_of_hom} $\dim_{\Bbbk}\Hom(\Bbbk L_{Y},\Bbbk L_{X})=|\widetilde{H}_{Y,X}|=0$
so $\Bbbk L_{Y}$ and $\Bbbk L_{X}$ are not isomorphic. If $|X|=|Y|$
then $\rho_{Y,X}^{{\bf 1}}\in\Hom(\Bbbk L_{Y},\Bbbk L_{X})$ and $\rho_{X,Y}^{{\bf 1}}\in\Hom(\Bbbk L_{X},\Bbbk L_{Y})$
are mutually inverse. Indeed, for every $g\in L_{Y}$ we have 
\[
\rho_{X,Y}^{{\bf 1}}(\rho_{Y,X}^{{\bf 1}}(g))=\rho_{Y,X}^{{\bf 1}}(g)f_{X,Y}^{{\bf 1}}=gf_{Y,X}^{{\bf 1}}f_{X,Y}^{{\bf 1}}=gf_{Y,Y}^{{\bf 1}}=g
\]
and likewise $\rho_{Y,X}^{{\bf 1}}(\rho_{X,Y}^{{\bf 1}}(g))=g$ for
$g\in L_{X}$. Therefore, $\Bbbk L_{X}\simeq\Bbbk L_{Y}$ as required. 
\end{proof}
For the skeleton $\mathcal{SL}_{n}$ we take one object from any isomorphism
class of objects. For every $\mbox{\ensuremath{k\in[n]}}$ we set
$L_{k}=L_{[k]}$ so we can take $\{\Bbbk L_{k}\mid1\leq k\leq n\}$
as the set of objects of the skeleton. For $\alpha\in\{{\bf 1},-{\bf 1}\}$
and $1\leq k\leq n$, set 
\[
f_{k}^{{\bf \alpha}}=f_{[k],[k]}^{\alpha},\quad\rho_{k}^{\alpha}=\rho_{[k],[k]}^{\alpha}
\]
so $\Hom(\Bbbk L_{k},\Bbbk L_{k})$ is spanned by $\{\rho_{k}^{{\bf 1}},\rho_{k}^{-{\bf 1}}\}$
according to \lemref{Rho_hom}. The homomorphism $\rho_{k}^{\alpha}$
is a representative of every $\rho_{Y,X}^{\alpha}$ with $|X|=|Y|=k$.
Note further that $\rho_{k}^{{\bf 1}}:\Bbbk L_{k}\to\Bbbk L_{k}$
is the identity function as $\rho_{k}^{{\bf 1}}(g)=gf_{k}^{{\bf {\bf 1}}}=g$
for every $g\in L_{k}$. For $\alpha\in\{{\bf 1},-{\bf 1}\}$, $1\leq k\leq n-1$
and $1\leq i\leq k+1$ we set 
\[
h_{k,i}^{\alpha}=f_{[k+1]\backslash\{i\},[k]}^{\alpha}
\]
and 
\[
d_{k}^{{\bf \alpha}}=\sum_{i=1}^{k+1}(-1)^{i+1}h_{k,i}^{\alpha}
\]
so according to \lemref{Delta_hom}, we have a homomorphism $\delta_{k}^{\alpha}:\Bbbk L_{k+1}\to\Bbbk L_{k}$
defined by $\mbox{\ensuremath{\delta_{k}^{\alpha}(g)=gd_{k}^{{\bf \alpha}}}}$.
The homomorphism $\text{\ensuremath{\delta_{k}^{\alpha}} is a representative of every \ensuremath{\delta_{Y,X}^{\alpha}} with \ensuremath{|X|=k}}$
and $|Y|=k+1$. By \propref{Dim_of_hom} we know that $\Hom(\Bbbk L_{k+1},\Bbbk L_{k})$
is spanned by $\{\delta_{k}^{{\bf 1}},\delta_{k}^{-{\bf 1}}\}$ and
$\Hom(\Bbbk L_{r},\Bbbk L_{k})=0$ if $r\notin\{k,k+1\}$. This gives
a concrete description of the skeleton $\mathcal{SL}_{n}$.

\subsection{\label{subsec:Composition-of-homomorphisms}Composition of homomorphisms}

In order to describe a quiver presentation we will have to calculate
the composition of each type of homomorphisms in the category $\mathcal{SL}_{n}$. 

First note that $\delta_{k}^{\alpha}\delta_{k+1}^{\beta}\in\Hom(\Bbbk L_{k+2},\Bbbk L_{k})=0$
so $\delta_{k}^{\alpha}\delta_{k+1}^{\beta}=0$ for $\alpha,\beta\in\{{\bf 1},-{\bf 1}\}$
and $1\leq k\leq n-2$. Moreover, $\rho_{k}^{{\bf 1}}$ is an identity
morphism so composition with $\rho_{k}^{{\bf 1}}$ is trivial. It
is left to consider compositions that involve $\rho_{k}^{{\bf -1}}.$
Since $f_{k}^{{\bf -{\bf 1}}}f_{k}^{{\bf -{\bf 1}}}=f_{k}^{{\bf {\bf 1}}}$
we obtain $\rho_{k}^{{\bf -1}}\rho_{k}^{{\bf -1}}(g)=gf_{k}^{{\bf -{\bf 1}}}f_{k}^{{\bf -{\bf 1}}}=gf_{k}^{{\bf 1}}=\rho_{k}^{{\bf 1}}(g)$
so $\rho_{k}^{{\bf -1}}\rho_{k}^{{\bf -1}}=\rho_{k}^{{\bf 1}}$. By
definition, $\delta_{k}^{-{\bf 1}}=\rho_{k}^{-{\bf 1}}\delta_{k}^{{\bf 1}}$
and therefore $\rho_{k}^{-{\bf 1}}\delta_{k}^{-{\bf 1}}=\rho_{k}^{-{\bf 1}}\rho_{k}^{-{\bf 1}}\delta_{k}^{{\bf 1}}=\delta_{k}^{{\bf 1}}$.
It is left to figure out what are the compositions $\delta_{k}^{{\bf 1}}\rho_{k+1}^{-{\bf 1}}$
and $\delta_{k}^{{\bf -1}}\rho_{k+1}^{-{\bf 1}}$.
\begin{lem}
The equality 
\[
\delta_{k}^{{\bf 1}}\rho_{k+1}^{-{\bf 1}}=(-1)^{k}\delta_{k}^{{\bf -1}}
\]
holds for every $1\leq k\leq n-1$.
\end{lem}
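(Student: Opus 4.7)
The plan is to reduce the claimed equality of homomorphisms $\Bbbk L_{k+1} \to \Bbbk L_k$ to a single identity in $\Bbbk L_k$ by exploiting the fact that $\Bbbk L_{k+1}$ is cyclic as a $\Bbbk\OD_n$-module. Indeed, for every $g \in L_{k+1}$ we have $g f_{k+1}^{\mathbf{1}} = g$ because the kernel set of $g$ is $[k+1]$, so $e_{[k+1]} = f_{k+1}^{\mathbf{1}}$ generates $\Bbbk L_{k+1}$. Since both $\delta_k^{\mathbf{1}} \rho_{k+1}^{-\mathbf{1}}$ and $(-1)^k \delta_k^{-\mathbf{1}}$ are $\Bbbk\OD_n$-homomorphisms, it is enough to check that they agree on $e_{[k+1]}$. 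Unwinding the definitions, this reduces the statement to the single equality
\[
f_{k+1}^{-\mathbf{1}} d_k^{\mathbf{1}} = (-1)^k d_k^{-\mathbf{1}}
\]
in $\Bbbk L_k$.

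The key step is to compute each summand $f_{k+1}^{-\mathbf{1}} h_{k,i}^{\mathbf{1}}$ separately. The function $f_{k+1}^{-\mathbf{1}}$ acts on $[k+1]$ by the order-reversing involution $j \mapsto k+2-j$, and collapses everything above to $1$. Since the image of $h_{k,i}^{\mathbf{1}} = f_{[k+1]\setminus\{i\},[k]}^{\mathbf{1}}$ is contained in $[k+1]$, the composition keeps the kernel set $[k]$, becomes order-reversing, and sends the image $[k+1]\setminus\{i\}$ bijectively onto $[k+1]\setminus\{k+2-i\}$. Hence
\[
f_{k+1}^{-\mathbf{1}} h_{k,i}^{\mathbf{1}} = h_{k,k+2-i}^{-\mathbf{1}}.
\]

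Finally I would substitute this into $f_{k+1}^{-\mathbf{1}} d_k^{\mathbf{1}} = \sum_{i=1}^{k+1}(-1)^{i+1} h_{k,k+2-i}^{-\mathbf{1}}$ and re-index by $j = k+2-i$. A direct computation gives $(-1)^{i+1} = (-1)^k(-1)^{j+1}$, so the sum becomes $(-1)^k \sum_{j=1}^{k+1}(-1)^{j+1} h_{k,j}^{-\mathbf{1}} = (-1)^k d_k^{-\mathbf{1}}$, proving the identity. There is no conceptual obstacle; the main thing to watch is the sign bookkeeping when the reflection $i \mapsto k+2-i$ is applied, to confirm that the accumulated parity is exactly $(-1)^k$.
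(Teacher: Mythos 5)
Your proof is correct and follows essentially the same route as the paper: both arguments hinge on the identity $f_{k+1}^{-\mathbf{1}}h_{k,i}^{\mathbf{1}}=h_{k,k+2-i}^{-\mathbf{1}}$ followed by the re-indexing $j=k+2-i$, which produces the sign $(-1)^{k}$. The only cosmetic differences are that you first reduce to the cyclic generator $e_{[k+1]}$ (the paper simply evaluates on an arbitrary $g$, which is equally immediate since both sides are right multiplication by a fixed element of $\Bbbk L_{k}$) and that you verify the key identity via kernel sets and images rather than pointwise, both of which are valid.
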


\begin{proof}
For every $x\in[k+1]$ we have 
\[
f_{k+1}^{{\bf {\bf -1}}}(x)=k+2-x
\]
and for every $x\in[k]$ and $1\leq i\leq k+1$ 
\[
h_{k,i}^{{\bf 1}}(x)=\begin{cases}
x & x<i\\
x+1 & i\leq x.
\end{cases}
\]
Therefore, for every $x\in[k]$
\begin{align*}
f_{k+1}^{{\bf -{\bf 1}}}h_{k,i}^{{\bf 1}}(x) & =\begin{cases}
k+2-x & x<i\\
k+1-x & i\leq x
\end{cases}=h_{k,k+2-i}^{{\bf -1}}(x).
\end{align*}
If $k<x\leq n$ then 
\[
f_{k+1}^{{\bf -{\bf 1}}}h_{k,i}^{{\bf 1}}(x)=f_{k+1}^{{\bf -{\bf 1}}}h_{k,i}^{{\bf 1}}(k)=f_{k+1}^{{\bf -{\bf 1}}}(k+1)=1=h_{k,k+2-i}^{{\bf -1}}(k)=h_{k,k+2-i}^{{\bf -1}}(x)
\]
so we establish 
\[
f_{k+1}^{{\bf -{\bf 1}}}h_{k,i}^{{\bf 1}}=h_{k,k+2-i}^{{\bf -1}}.
\]
Now,
\[
f_{k+1}^{{\bf -{\bf 1}}}d_{k}^{{\bf {\bf 1}}}=\sum_{i=1}^{k+1}(-1)^{i+1}f_{k+1}^{{\bf -{\bf 1}}}h_{k,i}^{{\bf 1}}=\sum_{i=1}^{k+1}(-1)^{i+1}h_{k,k+2-i}^{{\bf -1}}
\]
and rearranging the indices we obtain 
\[
\sum_{i=1}^{k+1}(-1)^{i+1}h_{k,k+2-i}^{{\bf -1}}=\sum_{i=1}^{k+1}(-1)^{k+3-i}h_{k,i}^{{\bf -1}}=(-1)^{k}\sum_{i=1}^{k+1}(-1)^{i+1}h_{k,i}^{{\bf -1}}=(-1)^{k}d_{k}^{-{\bf 1}}.
\]
Finally, 
\[
\delta_{k}^{{\bf 1}}\rho_{k+1}^{-{\bf 1}}(g)=\delta_{k}^{{\bf 1}}(gf_{k+1}^{{\bf -{\bf 1}}})=gf_{k+1}^{{\bf -{\bf 1}}}d_{k}^{{\bf 1}}=(-1)^{k}gd_{k}^{{\bf -1}}=(-1)^{k}\delta_{k}^{{\bf -1}}(g).
\]
\end{proof}
From $\delta_{k}^{{\bf 1}}\rho_{k+1}^{-{\bf 1}}=(-1)^{k}\delta_{k}^{{\bf -1}}$
we also obtain that 
\[
\delta_{k}^{{\bf -1}}\rho_{k+1}^{-{\bf 1}}=\rho_{k}^{-{\bf 1}}\delta_{k}^{{\bf 1}}\rho_{k+1}^{-{\bf 1}}=\rho_{k}^{-{\bf 1}}(-1)^{k}\delta_{k}^{{\bf -1}}=(-1)^{k}\delta_{k}^{{\bf 1}}.
\]

Before we summarize the relations, recall that $\Bbbk\OD_{n}$ is
Morita equivalent to $\mathscr{A}(\mathcal{SL}_{n}^{\op})$, so our
ultimate interest is in the opposite category $\mathcal{SL}_{n}^{\op}$.
It is convenient to switch to the opposite category at this point.
We will therefore list, in the next corollary, the opposites of the
relations we have obtained. In particular, note that $\delta_{k}^{\alpha}:\Bbbk L_{k}\to\Bbbk L_{k+1}$.
\begin{cor}
\label{cor:Relation_of_SL}Let $\alpha,\beta\in\{{\bf 1},-{\bf 1}\}$.
The following identities hold in the linear category $\mathcal{SL}_{n}^{\op}$:

\begin{align*}\delta_{k+1}^{\beta}\delta_{k}^{\alpha}&=0 &&\hspace{-1cm}(1\leq k\leq n-2)\\\rho_{k}^{\beta}\rho_{k}^{\alpha}&=\rho_{k}^{\beta\alpha}&&\hspace{-1cm}(1\leq k\leq n)\\\delta_{k}^{\beta}\rho_{k}^{\alpha}&=\delta_{k}^{\beta\alpha}&&\hspace{-1cm}(1\leq k\leq n-1)\\\rho_{k+1}^{{\bf 1}}\delta_{k}^{\alpha}&=\delta_{k}^{\alpha}&&\hspace{-1cm}(1\leq k\leq n-1)\\\rho_{k+1}^{-{\bf 1}}\delta_{k}^{\alpha}&=(-1)^{k}\delta_{k}^{-\alpha}&&\hspace{-1cm}(1\leq k\leq n-1).\end{align*}
\end{cor}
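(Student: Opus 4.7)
The plan is to observe that every identity in the corollary has already been established, implicitly or explicitly, in the discussion preceding it, and the proof therefore amounts to recording these identities after reversing the direction of composition (since we now work in $\mathcal{SL}_n^{\op}$) and checking the four sign cases $\alpha,\beta\in\{{\bf 1},-{\bf 1}\}$.

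I would take the identities one by one. The first, $\delta_{k+1}^{\beta}\delta_{k}^{\alpha}=0$ in $\mathcal{SL}_n^{\op}$, corresponds to $\delta_k^{\alpha}\delta_{k+1}^{\beta}$ in $\mathcal{SL}_n$, which lies in $\Hom(\Bbbk L_{k+2},\Bbbk L_k)$; the latter vanishes by \propref{Dim_of_hom} since $|\widetilde{H}_{[k+2],[k]}|=0$. The second identity $\rho_k^{\beta}\rho_k^{\alpha}=\rho_k^{\beta\alpha}$ is checked by cases: whenever $\alpha$ or $\beta$ equals ${\bf 1}$ it is immediate since $\rho_k^{\bf 1}$ is the identity morphism on $\Bbbk L_k$, and the remaining case $\rho_k^{-{\bf 1}}\rho_k^{-{\bf 1}}=\rho_k^{\bf 1}$ is exactly what was derived at the start of \subsecref{Composition-of-homomorphisms} using $f_k^{-{\bf 1}}f_k^{-{\bf 1}}=f_k^{\bf 1}$.

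For the third identity $\delta_k^{\beta}\rho_k^{\alpha}=\delta_k^{\beta\alpha}$, the sub-cases split as follows: when $\alpha={\bf 1}$, it is trivial since $\rho_k^{\bf 1}$ is an identity; when $\beta={\bf 1},\alpha=-{\bf 1}$, it is the defining equality $\delta_k^{-{\bf 1}}=\rho_k^{-{\bf 1}}\delta_k^{\bf 1}$ (read in the opposite category); when $\beta=\alpha=-{\bf 1}$, it is the observation $\rho_k^{-{\bf 1}}\delta_k^{-{\bf 1}}=\delta_k^{\bf 1}$ already noted; and when $\beta=-{\bf 1},\alpha={\bf 1}$, it is again the defining equality. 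The fourth identity $\rho_{k+1}^{\bf 1}\delta_k^{\alpha}=\delta_k^{\alpha}$ is a restatement of the fact that $\rho_{k+1}^{\bf 1}$ is the identity morphism on $\Bbbk L_{k+1}$.

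The only identity requiring actual calculation is the fifth, $\rho_{k+1}^{-{\bf 1}}\delta_k^{\alpha}=(-1)^k\delta_k^{-\alpha}$; in $\mathcal{SL}_n$ this reads $\delta_k^{\alpha}\rho_{k+1}^{-{\bf 1}}=(-1)^k\delta_k^{-\alpha}$, and both sub-cases were proved immediately before the corollary was stated (the case $\alpha=-{\bf 1}$ being deduced from the case $\alpha={\bf 1}$ by composing with $\rho_k^{-{\bf 1}}$ and using $\rho_k^{-{\bf 1}}\rho_k^{-{\bf 1}}=\rho_k^{\bf 1}$). There is no real obstacle; the only delicate point is the bookkeeping of swapping composition order when passing to the opposite category, so that the morphism that was written on the right in $\mathcal{SL}_n$ is written on the left in $\mathcal{SL}_n^{\op}$. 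Once that is done, the corollary is an immediate transcription of results already obtained.
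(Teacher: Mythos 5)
Your proposal is correct and follows exactly the route the paper takes: the corollary is stated there without a separate proof precisely because each identity is a transcription, into $\mathcal{SL}_{n}^{\op}$, of facts established in the preceding paragraphs (the vanishing of $\Hom(\Bbbk L_{k+2},\Bbbk L_{k})$, the identity property of $\rho_{k}^{{\bf 1}}$, the relation $\rho_{k}^{-{\bf 1}}\rho_{k}^{-{\bf 1}}=\rho_{k}^{{\bf 1}}$, the definition $\delta_{k}^{-{\bf 1}}=\rho_{k}^{-{\bf 1}}\delta_{k}^{{\bf 1}}$, and the lemma computing $\delta_{k}^{{\bf 1}}\rho_{k+1}^{-{\bf 1}}$). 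Your case analysis and the reversal of composition order are handled correctly; the only cosmetic quibble is that the sub-case $\beta=-{\bf 1},\alpha={\bf 1}$ of the third identity is already covered by the triviality of composing with $\rho_{k}^{{\bf 1}}$ rather than by the defining equality.
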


\subsection{\label{subsec:Change_of_bases}Change of basis}

In this subsection, we replace the basis of morphisms that we work
with by a more convenient choice. From now on, we assume that the
characteristic of $\Bbbk$ is different from $2$.

For $1\leq k\leq n$ set 
\[
\epsilon_{k}^{+}=\frac{\rho_{k}^{{\bf 1}}+\rho_{k}^{{\bf -1}}}{2},\quad\Delta_{k}^{+}=\frac{\delta_{k}^{{\bf 1}}+\delta_{k}^{{\bf -1}}}{2}
\]
 and for $2\leq k\leq n$ set 
\[
\epsilon_{k}^{-}=\frac{\rho_{k}^{{\bf 1}}-\rho_{k}^{{\bf -1}}}{2},\quad\Delta_{k}^{-}=\frac{\delta_{k}^{{\bf 1}}-\delta_{k}^{{\bf -1}}}{2}.
\]

Clearly, $\Hom(\Bbbk L_{k},\Bbbk L_{k})$ is spanned by $\{\epsilon_{k}^{+},\epsilon_{k}^{-}\}$
and $\Hom(\Bbbk L_{k},\Bbbk L_{k+1})$ is spanned by $\{\Delta_{k}^{{\bf +}},\Delta_{k}^{-}\}$.
The description of the relations between these morphisms will be more
convenient than the one in \corref{Relation_of_SL}.
\begin{lem}
\label{lem:New_Relations} Let $\alpha,\beta\in\{+,-\}$. The following
identities hold in the linear category $\mathcal{SL}_{n}^{\op}$:

\begin{align*} \Delta_{k+1}^{\beta}\Delta_{k}^{\alpha} & =0  && \hspace{-1cm} (1\leq k\leq n-2)\\ \epsilon_{k}^{\alpha}\epsilon_{k}^{\alpha} & =\epsilon_{k}^{\alpha} && \hspace{-1cm}(1\leq k\leq n) \\ \epsilon_{k}^{\alpha}\epsilon_{k}^{-\alpha} & =0 && \hspace{-1cm}(1\leq k\leq n) \\ \Delta_{k}^{\alpha}\epsilon_{k}^{\alpha} & =\Delta_{k}^{\alpha} && \hspace{-1cm} (1\leq k\leq n-1) \\ \Delta_{k}^{\alpha}\epsilon_{k}^{-\alpha} & =0 && \hspace{-1cm} (1\leq k\leq n-1) \\\epsilon_{k+1}^{\alpha}\Delta_{k}^{\alpha} & =\begin{cases} \Delta_{k}^{\alpha} & k\:\:\text{even}\\ 0 & k\:\:\text{odd} \end{cases} && \hspace{-1cm} (1\leq k\leq n-1) \\ \epsilon_{k+1}^{-\alpha}\Delta_{k}^{\alpha} & =\begin{cases} 0 & k\:\:\text{even}\\ \Delta_{k}^{\alpha} & k\:\:\text{odd} \end{cases}  && \hspace{-1cm} (1\leq k\leq n-1) \end{align*}
\end{lem}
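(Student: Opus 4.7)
The plan is to verify each identity by substituting the definitions of $\epsilon_k^\pm$ and $\Delta_k^\pm$ as half-sums/differences of the old basis elements, expanding by bilinearity, and applying the five relations from \corref{Relation_of_SL}. Since the hypothesis $\Char \Bbbk \neq 2$ is built into the definitions (the denominators of $2$ are invertible), each identity reduces to a short identity in $\mathbb{Z}_2$-arithmetic that we can read off.

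The first identity $\Delta_{k+1}^{\beta}\Delta_k^{\alpha}=0$ is immediate from bilinearity together with $\delta_{k+1}^{\beta}\delta_k^{\alpha}=0$ for all signs, since $\Delta_{k+1}^{\beta}\Delta_k^{\alpha}$ is a linear combination of four such products. For the purely $\epsilon$ identities, I would observe that the relation $\rho_k^{\beta}\rho_k^{\alpha}=\rho_k^{\beta\alpha}$ means that $\{\rho_k^{{\bf 1}},\rho_k^{-{\bf 1}}\}$ spans a copy of the group algebra $\Bbbk\mathbb{Z}_2$ inside $\End(\Bbbk L_k)$, and $\epsilon_k^{+},\epsilon_k^{-}$ are precisely its standard primitive orthogonal idempotents; this gives $(\epsilon_k^{\alpha})^2=\epsilon_k^{\alpha}$ and $\epsilon_k^{\alpha}\epsilon_k^{-\alpha}=0$ in one line. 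The mixed identities $\Delta_k^{\alpha}\epsilon_k^{\alpha}=\Delta_k^{\alpha}$ and $\Delta_k^{\alpha}\epsilon_k^{-\alpha}=0$ follow in exactly the same manner from $\delta_k^{\beta}\rho_k^{\alpha}=\delta_k^{\beta\alpha}$, since multiplication on the right by $\epsilon_k^{\pm}$ is the projection onto the $\pm$ eigenspaces under the $\mathbb{Z}_2$-action.

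The main point where care is needed is the last two identities, which involve composition on the left by $\epsilon_{k+1}^{\alpha}$. Here the parity $(-1)^k$ enters through $\rho_{k+1}^{-{\bf 1}}\delta_k^{\alpha}=(-1)^k\delta_k^{-\alpha}$. Expanding, for instance,
\[
\epsilon_{k+1}^{\alpha}\Delta_k^{\beta}=\tfrac{1}{4}\bigl(\rho_{k+1}^{{\bf 1}}+\alpha\rho_{k+1}^{-{\bf 1}}\bigr)\bigl(\delta_k^{{\bf 1}}+\beta\delta_k^{-{\bf 1}}\bigr),
\]
and using the two left-action relations, every term becomes a scalar multiple of $\delta_k^{{\bf 1}}$ or $\delta_k^{-{\bf 1}}$ with coefficient involving $\alpha$, $\beta$ and $(-1)^k$. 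Collecting terms, the coefficients of $\delta_k^{{\bf 1}}$ and $\delta_k^{-{\bf 1}}$ come out equal to $\tfrac{1}{4}(1+\alpha\beta(-1)^k)\beta^{\,?}$; in the case $\alpha=\beta$ this is $\tfrac12$ when $k$ is even and $0$ when $k$ is odd, and in the case $\alpha=-\beta$ the parities swap, producing exactly the case split recorded in the lemma.

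The only thing that could trip one up is bookkeeping of signs in this last calculation, so I would carry it out once with arbitrary signs $\alpha,\beta\in\{+,-\}$ and then specialize to $\beta=\alpha$ and $\beta=-\alpha$, rather than doing four separate computations. Everything else is essentially automatic from \corref{Relation_of_SL} and the $\Bbbk\mathbb{Z}_2$-idempotent structure of $\{\epsilon_k^+,\epsilon_k^-\}$.
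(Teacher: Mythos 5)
Your proposal is correct and follows essentially the same route as the paper: expand the $\epsilon$'s and $\Delta$'s by bilinearity and apply the relations of \corref{Relation_of_SL}, with the parity $(-1)^k$ entering only through $\rho_{k+1}^{-{\bf 1}}\delta_k^{\alpha}=(-1)^k\delta_k^{-\alpha}$ in the last two identities (the paper likewise treats the other identities as routine and writes out only this case). Your unified computation $\epsilon_{k+1}^{\alpha}\Delta_k^{\beta}=\tfrac12\bigl(1+\alpha\beta(-1)^k\bigr)\Delta_k^{\beta}$ checks out and correctly reproduces the case split.
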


\begin{proof}
All of the identities are routine to verify. We will do only the last.
If $k$ is even then 
\begin{align*}
\epsilon_{k+1}^{-}\Delta_{k}^{+} & =\frac{\rho_{k+1}^{{\bf 1}}-\rho_{k+1}^{{\bf -1}}}{2}\cdot\frac{\delta_{k}^{{\bf 1}}+\delta_{k}^{{\bf -1}}}{2}=0\\
\epsilon_{k+1}^{+}\Delta_{k}^{-} & =\frac{\rho_{k+1}^{{\bf 1}}+\rho_{k+1}^{{\bf -1}}}{2}\cdot\frac{\delta_{k}^{{\bf 1}}-\delta_{k}^{{\bf -1}}}{2}=0
\end{align*}
and if $k$ is odd $\rho_{k+1}^{-{\bf 1}}\delta_{k}^{\alpha}=-\delta_{k}^{-\alpha}$
so
\begin{align*}
\epsilon_{k+1}^{-}\Delta_{k}^{+} & =\frac{\rho_{k+1}^{{\bf 1}}-\rho_{k+1}^{{\bf -1}}}{2}\cdot\frac{\delta_{k}^{{\bf 1}}+\delta_{k}^{{\bf -1}}}{2}=\frac{\delta_{k}^{{\bf 1}}+\delta_{k}^{{\bf -1}}}{4}+\frac{\delta_{k}^{{\bf -1}}+\delta_{k}^{{\bf 1}}}{4}=\Delta_{k}^{+}\\
\epsilon_{k+1}^{+}\Delta_{k}^{-} & =\frac{\rho_{k+1}^{{\bf 1}}+\rho_{k+1}^{{\bf -1}}}{2}\cdot\frac{\delta_{k}^{{\bf 1}}-\delta_{k}^{{\bf -1}}}{2}=\frac{\delta_{k}^{{\bf 1}}-\delta_{k}^{{\bf -1}}}{4}+\frac{-\delta_{k}^{{\bf -1}}+\delta_{k}^{{\bf 1}}}{4}=\Delta_{k}^{-}.
\end{align*}
\end{proof}
Set $\mathcal{A}_{n}=\mathcal{\mathscr{A}}(\mathcal{SL^{\op}}_{n})$.
In the next subsection we will find a quiver presentation for $\text{\ensuremath{\mathcal{A}_{n}}}$
(which is also a quiver presentation for $\Bbbk\OD_{n}$). We conclude
this section by observing that morphisms of the form $\epsilon_{k}^{\alpha}$
form a complete set of primitive orthogonal idempotents for $\text{\ensuremath{\mathcal{A}_{n}}}$.
\begin{lem}
\label{lem:Primitive_idpts}The set $\{\epsilon_{k}^{+}\}_{k=1}^{n}\cup\{\epsilon_{k}^{-}\}_{k=2}^{n}$
is a complete set of primitive orthogonal idempotents for $\mathcal{A}_{n}$.
\end{lem}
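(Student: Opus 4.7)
The plan is to verify the four defining properties of a complete set of primitive orthogonal idempotents, drawing on \lemref{New_Relations} as the main tool. Idempotency of each $\epsilon_k^\alpha$ is already recorded there as $\epsilon_k^\alpha\epsilon_k^\alpha=\epsilon_k^\alpha$. Orthogonality splits into two cases: when $k=k'$ and $\alpha\neq\beta$, the identity $\epsilon_k^\alpha\epsilon_k^{-\alpha}=0$ from \lemref{New_Relations} applies directly; when $k\neq k'$, the morphisms $\epsilon_k^\alpha$ and $\epsilon_{k'}^\beta$ are self-loops at distinct objects $\Bbbk L_k,\Bbbk L_{k'}$ of $\mathcal{SL}_n^{\op}$, hence non-composable, and so their product in $\mathcal{A}_n=\mathscr{A}(\mathcal{SL}_n^{\op})$ vanishes by the very definition of the algebra $\mathscr{A}[\cdot]$ of a linear category.

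For completeness, I would use that the unit of $\mathcal{A}_n$ equals $\sum_{k=1}^{n} 1_{\Bbbk L_k}$, and that $1_{\Bbbk L_k}=\rho_k^{\mathbf{1}}$. For $k\geq 2$ the definitions give $\rho_k^{\mathbf{1}}=\epsilon_k^{+}+\epsilon_k^{-}$ at once. For $k=1$, any $Y\in P_{1}([n])$ with $|Y|=1$ forces $f_{Y,\{1\}}^{\mathbf{1}}=f_{Y,\{1\}}^{\mathbf{-1}}$, so $\rho_{1}^{\mathbf{1}}=\rho_{1}^{\mathbf{-1}}$ and therefore $\epsilon_{1}^{+}=\rho_{1}^{\mathbf{1}}$; this is exactly why the index set in the statement is $\{\epsilon_{k}^{+}\}_{k=1}^{n}\cup\{\epsilon_{k}^{-}\}_{k=2}^{n}$ rather than symmetric in the two signs. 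Summing then gives $\sum_{k=1}^{n}\epsilon_{k}^{+}+\sum_{k=2}^{n}\epsilon_{k}^{-}=\sum_{k=1}^{n}\rho_{k}^{\mathbf{1}}=1_{\mathcal{A}_{n}}$.

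For primitivity, I plan to apply the criterion that an idempotent $e$ is primitive if and only if $eAe$ is local. Fix $\epsilon_{k}^{\alpha}$. Since it is a self-loop at $\Bbbk L_{k}$, any $m\in\mathcal{A}_{n}$ with $\epsilon_{k}^{\alpha}m\epsilon_{k}^{\alpha}\neq 0$ must lie in $\Hom_{\mathcal{SL}_n^{\op}}(\Bbbk L_{k},\Bbbk L_{k})$. For $k\geq 2$ this hom-space has basis $\{\epsilon_{k}^{+},\epsilon_{k}^{-}\}$, and the idempotency and orthogonality relations from \lemref{New_Relations} yield $\epsilon_{k}^{\alpha}\epsilon_{k}^{\beta}\epsilon_{k}^{\alpha}=\delta_{\alpha,\beta}\,\epsilon_{k}^{\alpha}$; the case $k=1$ is even easier since the relevant hom-space is already one-dimensional, spanned by $\epsilon_{1}^{+}$. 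In every case $\epsilon_{k}^{\alpha}\mathcal{A}_{n}\epsilon_{k}^{\alpha}=\Bbbk\epsilon_{k}^{\alpha}\simeq\Bbbk$, which is local, so $\epsilon_{k}^{\alpha}$ is primitive.

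The only mildly delicate point in the whole argument is the $k=1$ boundary, where the collapse $\rho_{1}^{\mathbf{1}}=\rho_{1}^{\mathbf{-1}}$ removes $\epsilon_{1}^{-}$ from the list; tracking this carefully in the completeness step is what ensures the sum really equals $1_{\mathcal{A}_{n}}$. Everything else is an immediate consequence of the relations already listed in \lemref{New_Relations} together with the general fact that morphisms between different objects compose to zero in the algebra of a category.
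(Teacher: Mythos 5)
Your proposal is correct and follows essentially the same route as the paper's proof: completeness via $\sum_k\rho_k^{\mathbf 1}=1_{\mathcal A_n}$ with $\epsilon_k^{+}+\epsilon_k^{-}=\rho_k^{\mathbf 1}$ (and $\epsilon_1^{+}=\rho_1^{\mathbf 1}$), orthogonality from Lemma~\ref{lem:New_Relations} together with non-composability of endomorphisms at distinct objects, and primitivity from $\dim_\Bbbk\epsilon_k^{\alpha}\mathcal A_n\epsilon_k^{\alpha}=1$. Your extra remark explaining the $k=1$ collapse $\rho_1^{\mathbf 1}=\rho_1^{-\mathbf 1}$ is a welcome clarification but not a different argument.
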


\begin{proof}
Since $\epsilon_{k}^{+}+\epsilon_{k}^{-}=\rho_{k}^{{\bf 1}}$ for
every $k\in\{2,\ldots,n\}$ and $\epsilon_{1}^{+}=\rho_{1}^{{\bf 1}}$,
it is clear that 
\[
\sum_{k=1}^{n}\epsilon_{k}^{+}+\sum_{k=2}^{n}\epsilon_{k}^{-}=\sum_{k=1}^{n}\rho_{k}^{{\bf 1}}.
\]
The sum of all identity morphisms is the identity of the algebra $\mathcal{A}_{n}$.
Moreover, since $\epsilon_{k}^{\alpha}$ is an endomorphisms of $\Bbbk L_{k}$
it is clear that $\epsilon_{k}^{\beta}\epsilon_{r}^{\alpha}=0$ if
$r\neq k$ (for $\alpha,\beta\in\{+,-\}$). According to \lemref{New_Relations}
it is clear that 
\[
(\epsilon_{k}^{+})^{2}=\epsilon_{k}^{+},\quad(\epsilon_{k}^{-})^{2}=\epsilon_{k}^{-},\quad\epsilon_{k}^{+}\epsilon_{k}^{-}=\epsilon_{k}^{-}\epsilon_{k}^{+}=0
\]
so they are indeed orthogonal idempotents, Finally, it is clear that
$\epsilon_{k}^{\alpha}\mathcal{A}_{n}\epsilon_{k}^{\alpha}$ is spanned
by $\{\epsilon_{k}^{\alpha}\}$ so it is of dimension $1$. Therefore,
every idempotent of the form $\epsilon_{k}^{\alpha}$ is primitive
in $\mathcal{A}_{n}$.
\end{proof}

\subsection{\label{subsec:A-quiver-presentation}A quiver presentation}

In this subsection, we use the relations from \lemref{New_Relations}
to obtain a quiver presentation of $\mathcal{A}_{n}$. We begin by
describing its radical.
\begin{lem}
\label{lem:The_Radical}The radical of $\mathcal{\mathcal{A}}_{n}$
is $R=\Span\{\Delta_{k}^{\alpha}\mid k\in[n-1],\quad\alpha\in\{{\bf +},-\}\}$.
Moreover, $\text{\mbox{\ensuremath{\Rad^{2}\mathcal{\mathcal{A}}_{n}=0}}}$.
\end{lem}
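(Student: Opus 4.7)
The plan is to verify directly that $R$ is a nilpotent two-sided ideal of $\mathcal{A}_n$ whose quotient $\mathcal{A}_n / R$ is semisimple; since the Jacobson radical is characterized as the largest nilpotent ideal with semisimple quotient (in the finite-dimensional setting), this forces $R = \Rad \mathcal{A}_n$ and simultaneously yields $\Rad^2 \mathcal{A}_n = R^2 = 0$.

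First I would note that $\mathcal{A}_n$ has a basis consisting of the morphisms $\{\epsilon_k^{+}\}_{k=1}^{n} \cup \{\epsilon_k^{-}\}_{k=2}^{n} \cup \{\Delta_k^{+}, \Delta_k^{-}\}_{k=1}^{n-1}$, together with zero products between morphisms whose source and target do not match. In particular, a general element of $\mathcal{A}_n$ is a linear combination of an element in the $\epsilon$-span and an element in $R$. To show $R$ is a two-sided ideal, it suffices to multiply each basis element $\Delta_k^\alpha$ on both sides by each idempotent $\epsilon_r^\beta$ and each $\Delta_r^\beta$, and check the outcome lies in $R$. By \lemref{New_Relations} all such products are either $0$, $\pm\Delta_k^\alpha$, or (in the case $\Delta \cdot \Delta$) zero because the composable ones satisfy $\Delta_{k+1}^{\beta}\Delta_{k}^{\alpha}=0$. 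Hence $R$ is indeed a two-sided ideal, and the same computation shows $R^2 = 0$.

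Next I would examine the quotient $\mathcal{A}_n / R$. Modulo $R$ only the idempotents $\epsilon_k^\alpha$ survive as a basis, and by \lemref{Primitive_idpts} these are $2n-1$ mutually orthogonal idempotents whose sum is the identity. Moreover, since $\epsilon_k^\alpha \mathcal{A}_n \epsilon_k^\alpha$ is spanned by $\epsilon_k^\alpha$ alone (as any $\Delta_r^\beta$ connects different objects of $\mathcal{SL}_n^{\op}$), we have $\dim_\Bbbk (\epsilon_k^\alpha \mathcal{A}_n \epsilon_k^\alpha / \epsilon_k^\alpha R \epsilon_k^\alpha) = 1$. Therefore $\mathcal{A}_n / R \simeq \Bbbk^{2n-1}$, which is semisimple (in fact, split basic).

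Finally, combining these two facts: $R$ is a nilpotent ideal (so $R \subseteq \Rad \mathcal{A}_n$) and $\mathcal{A}_n / R$ is semisimple (so $\Rad \mathcal{A}_n \subseteq R$). Hence $R = \Rad \mathcal{A}_n$, and $\Rad^2 \mathcal{A}_n = R^2 = 0$. I do not expect a serious obstacle; the only care needed is to list all possible products of basis elements exhaustively and keep track of the domain/codomain compatibility conditions from the underlying category $\mathcal{SL}_n^{\op}$.
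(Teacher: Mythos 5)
Your proposal is correct and follows essentially the same route as the paper: show $R^{2}=0$ from the relation $\Delta_{k+1}^{\beta}\Delta_{k}^{\alpha}=0$, identify the quotient $\mathcal{A}_{n}/R$ with the span of the orthogonal idempotents $\epsilon_{k}^{\alpha}$ (which the paper packages as the endomorphism subcategory $G_{n}$, giving $\Bbbk^{2n-1}$), and conclude via the standard characterization that a nilpotent ideal with semisimple quotient is the radical. The only cosmetic difference is that you verify the two-sided ideal property by explicit basis products, while the paper leaves this implicit in the relations of \lemref{New_Relations}.
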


\begin{proof}
Since $\Delta_{k+1}^{\beta}\Delta_{k}^{\alpha}=0$ it is clear that
$R^{2}=0$. It is left to show that it is the radical. Let $G_{n}$
be the subcategory of $\mathcal{SL}_{n}$ with the same set of objects,
but whose morphisms are only the endomorphisms. In $G_{n}$, all morphisms
are linear combinations of morphisms of the form $\epsilon_{k}^{{\bf \alpha}}$.
For $2\leq k\leq n$, the set $\{\epsilon_{k}^{+},\epsilon_{k}^{-}\}$
is a basis for the endomorphisms of $\Bbbk L_{k}$. Being orthogonal
idempotents, they generate an algebra isomorphic to $\Bbbk^{2}$.
For $\Bbbk L_{1}$, the endomorphism algebra is isomorphic to $\Bbbk$.
Therefore, $\mathscr{A}(G_{n})\simeq\Bbbk\times{\displaystyle \prod_{k=2}^{n}}\Bbbk^{2}\simeq\Bbbk^{2n-1}$,
and hence it is a semisimple algebra. Since every morphism of $\mathcal{SL}_{n}$
lies either in $G_{n}$ or in $R$, it follows that $\mathcal{A}_{n}/R\simeq\mathscr{A}(G_{n})$.
Therefore, $R$ is a nilpotent ideal with the property that $\mathcal{A}_{n}/R$
is semisimple. This shows that $R=\Rad\mathcal{A}_{n}.$ 
\end{proof}
\begin{cor}
The algebra $\mathcal{A}_{n}$ is split basic.
\end{cor}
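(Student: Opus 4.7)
The plan is to observe that this corollary follows essentially immediately from the preceding work, since the hard analysis has already been done in \lemref{The_Radical} and \lemref{Primitive_idpts}. By definition, an algebra $A$ is split basic precisely when $A/\Rad A \simeq \Bbbk^{m}$ for some $m \in \mathbb{N}$, so I simply need to invoke what was computed during the proof of \lemref{The_Radical}.

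The key step is to recall that in the proof of \lemref{The_Radical} it was shown that $\mathcal{A}_n/R \simeq \mathscr{A}(G_n) \simeq \Bbbk \times \prod_{k=2}^{n}\Bbbk^2 \simeq \Bbbk^{2n-1}$, and that $R = \Rad \mathcal{A}_n$. Combining these two facts yields $\mathcal{A}_n/\Rad \mathcal{A}_n \simeq \Bbbk^{2n-1}$, which is exactly the definition of split basic.

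As an alternative (or a sanity check) I would mention that the conclusion is also visible from \lemref{Primitive_idpts}: a finite-dimensional algebra is split basic if and only if it admits a complete set of primitive orthogonal idempotents $\{e_1,\ldots,e_m\}$ with $\dim_{\Bbbk}(e_i A e_i) = 1$ modulo the radical for each $i$. The set $\{\epsilon_k^{+}\}_{k=1}^{n}\cup\{\epsilon_k^{-}\}_{k=2}^{n}$ is such a set in $\mathcal{A}_n$, since each corner $\epsilon_k^{\alpha}\mathcal{A}_n\epsilon_k^{\alpha}$ is already one-dimensional in $\mathcal{A}_n$ itself.

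There is essentially no obstacle here — the proof is a one-line deduction from \lemref{The_Radical}. The only thing worth being careful about is to phrase the conclusion in terms of the definition of split basic that appears in the preliminaries, rather than implicitly appealing to a different (though equivalent) characterization. I would therefore write a very short proof of the form: ``By \lemref{The_Radical}, $\mathcal{A}_n/\Rad \mathcal{A}_n \simeq \Bbbk^{2n-1}$, so $\mathcal{A}_n$ is split basic.''
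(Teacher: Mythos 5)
Your proof is correct and is exactly the paper's argument: the corollary follows immediately from the computation in Lemma \ref{lem:The_Radical} that $\mathcal{A}_{n}/\Rad\mathcal{A}_{n}\simeq\Bbbk^{2n-1}$, which is the definition of split basic. The alternative check via the primitive idempotents is consistent but not needed.
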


\begin{proof}
Follows from $\mathcal{A}_{n}/\Rad\mathcal{A}_{n}\simeq\Bbbk^{2n-1}$.
\end{proof}
Since $\mathcal{A}_{n}$ is a split basic algebra, it admits a quiver
presentation. Let $Q$ be the quiver of the algebra $\mathcal{A}_{n}$.
The vertices of $Q$ are in one-to-one correspondence with the primitive
idempotents. If $e_{1},e_{2}$ are two primitive idempotents, then
the number of arrows from $e_{1}$ to $e_{2}$ is 
\[
\dim_{\Bbbk}(e_{2}+\Rad^{2}\mathcal{A}_{n})(\Rad\mathcal{A}_{n}/\Rad^{2}\mathcal{A}_{n})(e_{1}+\Rad^{2}\mathcal{A}_{n}).
\]
 We have already seen that $\Rad^{2}\mathcal{A}_{n}=0$ so the arrows
from $e_{1}$ to $e_{2}$ correspond to a basis of $e_{2}\Rad\mathcal{A}_{n}e_{1}$.
The radical $\Rad\mathcal{A}_{n}$ is spanned by morphisms from $\Bbbk L_{k}$
to $\Bbbk L_{k+1}$, so $\epsilon_{r}^{\beta}\Rad\mathcal{A}_{n+1}\epsilon_{k}^{\alpha}=0$
if $r\neq k+1$. Therefore, there are no arrows from $\epsilon_{k}^{\alpha}$
to $\epsilon_{r}^{\beta}$ if $r\neq k+1$. It is left to handle the
$r=k+1$ case.
\begin{lem}
\label{lem:Odd_arrows}If $k$ is even then there is one arrow from
$\epsilon_{k}^{+}$ to $\epsilon_{k+1}^{+}$ and one arrow from $\epsilon_{k}^{-}$
to $\epsilon_{k+1}^{-}$.
\end{lem}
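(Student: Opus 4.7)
The plan is to read off the arrows directly from the relations listed in \lemref{New_Relations}, using \lemref{The_Radical}, which tells us that $\Rad\mathcal{A}_n$ is spanned by $\{\Delta_k^{\alpha} \mid k\in[n-1],\ \alpha\in\{+,-\}\}$ and that $\Rad^2\mathcal{A}_n=0$. Because $\Rad^2\mathcal{A}_n=0$, the number of arrows from a primitive idempotent $e_1$ to $e_2$ is simply $\dim_{\Bbbk}(e_2 \Rad\mathcal{A}_n\, e_1)$, no quotient needed.

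I would then localize the computation at $e_1=\epsilon_k^{\alpha}$ and $e_2=\epsilon_{k+1}^{\beta}$. Since the $\Delta_k^{\gamma}$ are the only radical morphisms that can fit between $\Bbbk L_k$ and $\Bbbk L_{k+1}$, the space $\epsilon_{k+1}^{\beta}\Rad\mathcal{A}_n\,\epsilon_k^{\alpha}$ is spanned by the four products $\epsilon_{k+1}^{\beta}\Delta_k^{\gamma}\epsilon_k^{\alpha}$ with $\gamma\in\{+,-\}$. The identities $\Delta_k^{\gamma}\epsilon_k^{\gamma}=\Delta_k^{\gamma}$ and $\Delta_k^{\gamma}\epsilon_k^{-\gamma}=0$ force $\gamma=\alpha$ on the right, and then the $k$-even identities $\epsilon_{k+1}^{\alpha}\Delta_k^{\alpha}=\Delta_k^{\alpha}$, $\epsilon_{k+1}^{-\alpha}\Delta_k^{\alpha}=0$ select the surviving factor on the left.

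Carrying this out when $k$ is even yields
\begin{align*}
\epsilon_{k+1}^{+}\Delta_k^{+}\epsilon_k^{+} &= \Delta_k^{+}, & \epsilon_{k+1}^{-}\Delta_k^{-}\epsilon_k^{-} &= \Delta_k^{-}, \\
\epsilon_{k+1}^{-}\Delta_k^{\gamma}\epsilon_k^{+} &= 0, & \epsilon_{k+1}^{+}\Delta_k^{\gamma}\epsilon_k^{-} &= 0,
\end{align*}
for $\gamma\in\{+,-\}$. Hence $\epsilon_{k+1}^{+}\Rad\mathcal{A}_n\,\epsilon_k^{+}$ is one-dimensional with basis $\Delta_k^{+}$, giving exactly one arrow from $\epsilon_k^{+}$ to $\epsilon_{k+1}^{+}$, and similarly $\epsilon_{k+1}^{-}\Rad\mathcal{A}_n\,\epsilon_k^{-}$ is one-dimensional with basis $\Delta_k^{-}$, giving exactly one arrow from $\epsilon_k^{-}$ to $\epsilon_{k+1}^{-}$. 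All other pairs produce $0$, so there are no cross-arrows.

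There is no real obstacle here: the whole statement is an immediate bookkeeping consequence of \lemref{New_Relations} together with $\Rad^2\mathcal{A}_n=0$. The only thing to be careful about is matching superscripts consistently in the four cases so that none of the surviving products gets accidentally discarded.
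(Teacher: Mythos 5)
Your proposal is correct and follows essentially the same route as the paper: both reduce, via \lemref{The_Radical} and $\Rad^{2}\mathcal{A}_{n}=0$, to evaluating the products $\epsilon_{k+1}^{\beta}\Delta_{k}^{\gamma}\epsilon_{k}^{\alpha}$ using the relations of \lemref{New_Relations}. The only cosmetic difference is that you first kill the $\gamma\neq\alpha$ cases via $\Delta_{k}^{\gamma}\epsilon_{k}^{-\gamma}=0$ and then apply the $k$-even left identities, whereas the paper simply tabulates all the products; the outcome is identical.
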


\begin{proof}
According to \lemref{The_Radical}, $\epsilon_{k+1}^{\beta}\Rad\mathcal{A}_{n}\epsilon_{k}^{\alpha}$
is spanned by $\epsilon_{k+1}^{\beta}\Delta_{k}^{+}\epsilon_{k}^{\alpha}$
and $\epsilon_{k+1}^{\beta}\Delta_{k}^{-}\epsilon_{k}^{\alpha}.$
Using the relations from \lemref{New_Relations}, it is routine to
check that
\begin{align*}
\epsilon_{k+1}^{+}\Delta_{k}^{+}\epsilon_{k}^{+} & =\Delta_{k}^{+},\quad\epsilon_{k+1}^{-}\Delta_{k}^{-}\epsilon_{k}^{-}=\Delta_{k}^{-}\\
\epsilon_{k+1}^{+}\Delta_{k}^{+}\epsilon_{k}^{-} & =\epsilon_{k+1}^{-}\Delta_{k}^{+}\epsilon_{k}^{+}=\epsilon_{k+1}^{-}\Delta_{k}^{+}\epsilon_{k}^{-}=0\\
\epsilon_{k+1}^{-}\Delta_{k}^{-}\epsilon_{k}^{+} & =\epsilon_{k+1}^{+}\Delta_{k}^{-}\epsilon_{k}^{-}=\epsilon_{k+1}^{+}\Delta_{k}^{-}\epsilon_{k}^{+}=0.
\end{align*}
Therefore, there is one arrow from $\epsilon_{k}^{+}$ to $\epsilon_{k+1}^{+}$
that we associate with $\Delta_{k}^{+}\in\mathcal{A}_{n+1}$, and
there is one arrow from $\epsilon_{k}^{-}$ to $\epsilon_{k+1}^{-}$
that we associate with $\Delta_{k}^{-}\in\mathcal{A}_{n+1}$. There
are no arrows from $\epsilon_{k}^{+}$ to $\epsilon_{k+1}^{-}$ or
from $\epsilon_{k}^{-}$ to $\epsilon_{k+1}^{+}$.
\end{proof}
\begin{lem}
\label{lem:Even_arrows}If $k$ is odd then there is one arrow from
$\epsilon_{k}^{+}$ to $\epsilon_{k+1}^{-}$ and one arrow from $\epsilon_{k}^{-}$
to $\epsilon_{k+1}^{+}$.
\end{lem}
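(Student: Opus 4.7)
The plan is to mirror the strategy of \lemref{Odd_arrows} exactly, using the fact that $\Rad^{2}\mathcal{A}_{n}=0$ so the arrows from $\epsilon_{k}^{\alpha}$ to $\epsilon_{k+1}^{\beta}$ correspond to a basis of $\epsilon_{k+1}^{\beta}\Rad\mathcal{A}_{n}\epsilon_{k}^{\alpha}$. Since $\Rad\mathcal{A}_{n}$ is spanned by the morphisms $\Delta_{k}^{+}$ and $\Delta_{k}^{-}$ (for $1\le k\le n-1$), the space $\epsilon_{k+1}^{\beta}\Rad\mathcal{A}_{n}\epsilon_{k}^{\alpha}$ is spanned by the four products $\epsilon_{k+1}^{\beta}\Delta_{k}^{\gamma}\epsilon_{k}^{\alpha}$ where $\gamma\in\{+,-\}$.

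First I would use the identities $\Delta_{k}^{\gamma}\epsilon_{k}^{\gamma}=\Delta_{k}^{\gamma}$ and $\Delta_{k}^{\gamma}\epsilon_{k}^{-\gamma}=0$ from \lemref{New_Relations} to reduce these products: they vanish unless $\gamma=\alpha$, in which case $\Delta_{k}^{\gamma}\epsilon_{k}^{\alpha}=\Delta_{k}^{\alpha}$. Then, since $k$ is now odd, the last two relations of \lemref{New_Relations} give $\epsilon_{k+1}^{\alpha}\Delta_{k}^{\alpha}=0$ and $\epsilon_{k+1}^{-\alpha}\Delta_{k}^{\alpha}=\Delta_{k}^{\alpha}$.

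Combining these computations yields
\begin{align*}
\epsilon_{k+1}^{-}\Delta_{k}^{+}\epsilon_{k}^{+} & =\Delta_{k}^{+}, \quad \epsilon_{k+1}^{+}\Delta_{k}^{-}\epsilon_{k}^{-}=\Delta_{k}^{-}, \\
\epsilon_{k+1}^{+}\Delta_{k}^{+}\epsilon_{k}^{+} & =\epsilon_{k+1}^{+}\Delta_{k}^{+}\epsilon_{k}^{-}=\epsilon_{k+1}^{-}\Delta_{k}^{+}\epsilon_{k}^{-}=0, \\
\epsilon_{k+1}^{-}\Delta_{k}^{-}\epsilon_{k}^{-} & =\epsilon_{k+1}^{-}\Delta_{k}^{-}\epsilon_{k}^{+}=\epsilon_{k+1}^{+}\Delta_{k}^{-}\epsilon_{k}^{+}=0.
\end{align*}
Consequently $\epsilon_{k+1}^{-}\Rad\mathcal{A}_{n}\epsilon_{k}^{+}$ is one-dimensional, spanned by $\Delta_{k}^{+}$, giving one arrow from $\epsilon_{k}^{+}$ to $\epsilon_{k+1}^{-}$; similarly $\epsilon_{k+1}^{+}\Rad\mathcal{A}_{n}\epsilon_{k}^{-}$ is spanned by $\Delta_{k}^{-}$, giving one arrow from $\epsilon_{k}^{-}$ to $\epsilon_{k+1}^{+}$; and the spaces $\epsilon_{k+1}^{+}\Rad\mathcal{A}_{n}\epsilon_{k}^{+}$ and $\epsilon_{k+1}^{-}\Rad\mathcal{A}_{n}\epsilon_{k}^{-}$ vanish.

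There is no conceptual obstacle here: the argument is a completely routine parity variant of \lemref{Odd_arrows}, and all the work was already done in assembling the clean relations of \lemref{New_Relations}. The only point requiring a moment of care is remembering that the parity condition in \lemref{New_Relations} swaps the roles of matching versus non-matching signs on the left, which is exactly what produces the ``off-diagonal'' arrows $\epsilon_{k}^{+}\to\epsilon_{k+1}^{-}$ and $\epsilon_{k}^{-}\to\epsilon_{k+1}^{+}$ in the odd case as opposed to the ``diagonal'' arrows of the even case.
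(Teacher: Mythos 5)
Your proposal is correct and follows exactly the paper's own argument: it identifies $\epsilon_{k+1}^{\beta}\Rad\mathcal{A}_{n}\epsilon_{k}^{\alpha}$ as spanned by the products $\epsilon_{k+1}^{\beta}\Delta_{k}^{\gamma}\epsilon_{k}^{\alpha}$ and evaluates them with the relations of Lemma \ref{lem:New_Relations}, arriving at the same nonvanishing products $\epsilon_{k+1}^{-}\Delta_{k}^{+}\epsilon_{k}^{+}=\Delta_{k}^{+}$ and $\epsilon_{k+1}^{+}\Delta_{k}^{-}\epsilon_{k}^{-}=\Delta_{k}^{-}$. All the computations check out, so nothing further is needed.
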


\begin{proof}
According to \lemref{The_Radical}, $\epsilon_{k+1}^{\beta}\Rad\mathcal{A}\epsilon_{k}^{\alpha}$
is spanned by $\epsilon_{k+1}^{\beta}\Delta_{k}^{+}\epsilon_{k}^{\alpha}$
and $\epsilon_{k+1}^{\beta}\Delta_{k}^{-}\epsilon_{k}^{\alpha}.$
Using the relations from \lemref{New_Relations}, it is routine to
check that
\begin{align*}
\epsilon_{k+1}^{-}\Delta_{k}^{+}\epsilon_{k}^{+} & =\Delta_{k}^{+},\quad\epsilon_{k+1}^{+}\Delta_{k}^{-}\epsilon_{k}^{-}=\Delta_{k}^{-}\\
\epsilon_{k+1}^{+}\Delta_{k}^{+}\epsilon_{k}^{+} & =\epsilon_{k+1}^{+}\Delta_{k}^{+}\epsilon_{k}^{-}=\epsilon_{k+1}^{-}\Delta_{k}^{+}\epsilon_{k}^{-}=0\\
\epsilon_{k+1}^{-}\Delta_{k}^{-}\epsilon_{k}^{-} & =\epsilon_{k+1}^{-}\Delta_{k}^{-}\epsilon_{k}^{+}=\epsilon_{k+1}^{+}\Delta_{k}^{-}\epsilon_{k}^{+}=0.
\end{align*}
Therefore, there is one arrow from $\epsilon_{k}^{+}$ to $\epsilon_{k+1}^{-}$
that we associate with $\Delta_{k}^{+}\in\mathcal{A}_{n+1}$, and
there is one arrow from $\epsilon_{k}^{-}$ to $\epsilon_{k+1}^{+}$
that we associate with $\Delta_{k}^{-}\in\mathcal{A}_{n+1}$. There
are no arrows from $\epsilon_{k}^{-}$ to $\epsilon_{k+1}^{-}$ or
from $\epsilon_{k}^{+}$ to $\epsilon_{k+1}^{+}$.
\end{proof}
The arrows of the quiver are associated with the morphisms $\Delta_{k}^{+}$
and $\Delta_{k}^{-}$. Then, in view of the relation $\Delta_{k+1}^{\beta}\Delta_{k}^{\alpha}=0$,
it is clear that composition of two arrows equals zero in the quiver
presentation.

We can now finally conclude:
\begin{thm}
Let $\Bbbk$ be a field whose characteristic is not $2$. The quiver
of the algebra $\Bbbk\OD_{n}$ has $2n-1$ vertices: $\epsilon_{k}^{+}$
for $1\leq k\leq n$ and $\epsilon_{k}^{-}$ for $2\leq k\leq n$.
For even $k$ there is one arrow from $\epsilon_{k}^{+}$ to $\epsilon_{k+1}^{+}$
and one arrow from $\epsilon_{k}^{-}$ to $\epsilon_{k+1}^{-}$. For
odd $k$, there is one arrow from $\epsilon_{k}^{+}$ to $\epsilon_{k+1}^{-}$
and one arrow from $\epsilon_{k}^{-}$ to $\epsilon_{k+1}^{+}$. In
the presentation, every composition of consecutive arrows is zero.
\end{thm}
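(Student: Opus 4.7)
The plan is to assemble the theorem directly from the lemmas established in Sections \ref{subsec:Change_of_bases} and the earlier part of \ref{subsec:A-quiver-presentation}; essentially everything needed is already in place, and the only task is to organize it.

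First, I would invoke Lemma \ref{lem:Primitive_idpts}, which produces the complete set of primitive orthogonal idempotents $\{\epsilon_k^+\}_{k=1}^n\cup\{\epsilon_k^-\}_{k=2}^n$. These $2n-1$ idempotents are exactly the vertices of the quiver, giving the stated vertex set.

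Next, I would count arrows. Since by Lemma \ref{lem:The_Radical} we have $\Rad^2\mathcal{A}_n=0$, the quotient $\Rad\mathcal{A}_n/\Rad^2\mathcal{A}_n$ coincides with $\Rad\mathcal{A}_n$ itself, so the arrows from $\epsilon_k^\alpha$ to $\epsilon_r^\beta$ are in bijection with a basis of $\epsilon_r^\beta(\Rad\mathcal{A}_n)\epsilon_k^\alpha$. The same lemma tells us $\Rad\mathcal{A}_n$ is spanned by the $\Delta_k^\alpha$ with $k\in[n-1]$ and $\alpha\in\{+,-\}$, and each $\Delta_k^\alpha$ goes from $\Bbbk L_k$ to $\Bbbk L_{k+1}$; hence the space $\epsilon_r^\beta(\Rad\mathcal{A}_n)\epsilon_k^\alpha$ vanishes unless $r=k+1$. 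For $r=k+1$, Lemmas \ref{lem:Odd_arrows} and \ref{lem:Even_arrows} compute the relevant four products $\epsilon_{k+1}^\beta\Delta_k^\gamma\epsilon_k^\alpha$ case by case and show that exactly two of the four hom-spaces $\epsilon_{k+1}^\beta(\Rad\mathcal{A}_n)\epsilon_k^\alpha$ are one-dimensional and the other two are zero, with the pattern depending on the parity of $k$. This gives precisely the arrow set described in the statement.

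Finally, the relation $\Delta_{k+1}^\beta\Delta_k^\alpha=0$ from Lemma \ref{lem:New_Relations} shows that in the quiver presentation the composition of any two consecutive arrows is zero, as each arrow is represented by some $\Delta_k^\alpha$. I expect no real obstacle here, since all the hard work, establishing the right invertibility of the sandwich matrices, computing $\dim\Hom(\Bbbk L_Y,\Bbbk L_X)$, working out the composition rules among the $\rho_k^\alpha$ and $\delta_k^\alpha$, and then changing basis to the $\epsilon_k^\pm,\Delta_k^\pm$, has already been done; the final step is only bookkeeping, so the proof will be a short assembly paragraph rather than a new argument.
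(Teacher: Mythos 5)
Your proposal is correct and follows exactly the paper's route: the theorem is stated right after the lemmas you cite, and the paper likewise obtains the vertices from Lemma \ref{lem:Primitive_idpts}, the arrows from Lemma \ref{lem:The_Radical} together with Lemmas \ref{lem:Odd_arrows} and \ref{lem:Even_arrows}, and the relations from $\Delta_{k+1}^{\beta}\Delta_{k}^{\alpha}=0$. Nothing is missing; this is indeed just the assembly step.
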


\begin{example}
The quiver of $\Bbbk\OD_{6}$ is displayed in the following figure:
\end{example}

\begin{center} \begin{tikzpicture}     \node[circle, fill, inner sep=2pt, label=left:$\epsilon^+_1$] (p1) at (0,0) {};     \foreach \k in {2,3,4,5,6}         \node[circle, fill, inner sep=2pt, label=left:$\epsilon^+_{\k}$] (p\k) at (0,\k-1) {};     \foreach \k in {2,3,4,5,6}         \node[circle, fill, inner sep=2pt, label=right:$\epsilon^-_{\k}$] (m\k) at (2,\k-1) {};          \draw[->, line width=0.8pt] (p1) -- (m2);     \foreach \k in {2,4}         \draw[->, line width=0.8pt] (p\k) -- (p\the\numexpr\k+1);     \foreach \k in {2,4}         \draw[->, line width=0.8pt] (m\k) -- (m\the\numexpr\k+1);     \foreach \k in {3,5}         \draw[->, line width=0.8pt] (p\k) -- (m\the\numexpr\k+1);     \foreach \k in {3,5}         \draw[->, line width=0.8pt] (m\k) -- (p\the\numexpr\k+1); \end{tikzpicture} \end{center}

Of course, if we forget the names of the vertices we obtain two straightline
paths.
\begin{cor}
Let $\Bbbk$ be a field whose characteristic is not $2$. The quiver
of the algebra $\Bbbk\OD_{n}$ has two straightline paths, one with
$n-1$ vertices and one with $n$ vertices. In the presentation, every
composition of consecutive arrows is zero.
\end{cor}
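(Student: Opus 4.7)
The plan is to derive this corollary as an immediate graph-theoretic consequence of the preceding theorem. The relations (every composition of two consecutive arrows equals zero) carry over verbatim from the theorem, so only the combinatorial shape of the underlying graph needs verification: I need to show that the described directed graph on $2n-1$ vertices decomposes as the disjoint union of two straightline paths of lengths $n$ and $n-1$.

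First, I would verify that every vertex has in-degree at most $1$ and out-degree at most $1$. For each $k \in \{1,\ldots,n-1\}$, the theorem specifies exactly two arrows from level $k$ into level $k+1$, and these arrows have distinct sources and distinct targets. The only exception is $k=1$, which contributes the single arrow $\epsilon_{1}^{+}\to\epsilon_{2}^{-}$ because $\epsilon_{1}^{-}$ does not exist. Since every arrow strictly increases the level $k$, the graph contains no cycles, and combined with the degree bounds this forces the quiver to be a disjoint union of straightline paths.

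Next, I would locate the source vertices (those of in-degree $0$). These are precisely $\epsilon_{1}^{+}$ and $\epsilon_{2}^{+}$: the former because there is no level-$0$ vertex, and the latter because the unique arrow leaving $\epsilon_{1}^{+}$ targets $\epsilon_{2}^{-}$ rather than $\epsilon_{2}^{+}$. Tracing the unique outgoing arrow from each source while applying the parity rules of the theorem, I would verify that the path beginning at $\epsilon_{1}^{+}$ visits exactly one vertex at each level $k \in \{1,\ldots,n\}$, yielding $n$ vertices, while the path beginning at $\epsilon_{2}^{+}$ visits exactly one vertex at each level $k \in \{2,\ldots,n\}$, yielding $n-1$ vertices. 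These together exhaust the $2n-1$ vertices, confirming that the two paths constitute the entire quiver.

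There is no real obstacle here; the argument amounts to bookkeeping once the theorem is in hand. The only mild subtlety is keeping track of the parity toggling between the $+$ and $-$ decorations as one follows arrows along a path, but this is a routine check.
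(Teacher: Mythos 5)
Your argument is correct and matches the paper's (essentially unstated) reasoning: the corollary is just the preceding theorem read off combinatorially, and your degree-count, source identification ($\epsilon_1^+$ and $\epsilon_2^+$), and path-tracing all check out against the parity rules and against the $\OD_6$ example. The paper dispatches this with a single sentence, so your version simply makes the bookkeeping explicit.
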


\begin{rem}
The quiver has two connected components. Therefore, there exists a
non-trivial central idempotent $e\in\Bbbk\OD_{n}$ such that $\Bbbk\OD_{n}$
decomposes into a direct product of algebras
\[
\Bbbk\OD_{n}\simeq e\Bbbk\OD_{n}e\times(1-e)\Bbbk\OD_{n}(1-e).
\]
We leave open the question of describing this idempotent as a linear
combination of elements of $\OD_{n}$.
\end{rem}

\section{\label{sec:COD_section}A covering semidirect product}

Order-preserving functions $f\in\Op_{n}$ are in one-to-one correspondence
with pairs $X\in P_{1}([n])$, $\mbox{\ensuremath{Y\in P([n])}}$
such that $|Y|=|X|$. This is also the case for order-reversing functions
$f\in\OD_{n}$, which gives the impression that $\OD_{n}$ is ``twice''
the monoid $\Op_{n}$. However, this is not true even in terms of
cardinality, since $|\OD_{n}|=2|\Op_{n}|-n.$ The reason is that there
is no difference between constant order-preserving functions and constant
order-reversing functions. In this section we introduce a new monoid
that we dub $\COD_{n}$ (stands for ``Covering $\OD_{n}")$. In $\COD_{n}$
we artificially distinguish between $f_{Y,X}^{{\bf 1}}$ and $f_{Y,X}^{-{\bf 1}}$
even if they are constant functions. The monoid $\OD_{n}$ is a natural
quotient of $\COD_{n}$, and in several aspects, $\COD_{n}$ is indeed
``twice'' $\Op_{n}$. We prove that $\COD_{n}\simeq\Op_{n}\rtimes\mathbb{Z}_{2}$
(so in particular, $|\COD_{n}|=2|\Op_{n}|$). Moreover, imitating
the approach of \secref{Order_preserving}, we show that $\Bbbk\COD_{n}\simeq\Bbbk\Op_{n}\times\Bbbk\Op_{n}$
for every field $\Bbbk$ whose characteristic is not $2$.

\subsection{Definition and basic observations}

For $z\in[n]$, we denote by $c_{z}$ the constant function $c_{z}\in\OD_{n}$
defined by $c_{z}(x)=z$ for every $x\in[n]$. Note that $c_{z}=f_{\{z\},\{1\}}^{{\bf 1}}=f_{\{z\},\{1\}}^{{\bf -1}}$.
Consider the direct product $\OD_{n}\times\mathbb{Z}_{2}$ where $\mathbb{Z}_{2}$
is the multiplicative group $\{{\bf 1},{\bf -1}\}$. We denote by
$\COD_{n}$ the following submonoid 
\[
\COD_{n}=\{(f_{Y,X}^{\alpha},\alpha)\mid f_{Y,X}^{\alpha}\in\OD_{n}\text{ is not a constant function}\}\cup\{(c_{z},{\bf 1}),(c_{z},-{\bf 1})\mid z\in[n]\}.
\]
 
\begin{lem}
The set $\COD_{n}$ is indeed a submonoid of $\OD_{n}\times\mathbb{Z}_{2}$.
\end{lem}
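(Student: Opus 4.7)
The plan is to verify directly the two submonoid axioms: that $\COD_n$ contains the identity of $\OD_n \times \mathbb{Z}_2$, and that it is closed under the componentwise product. The identity of $\OD_n \times \mathbb{Z}_2$ is $(\mathrm{id}_{[n]}, {\bf 1}) = (f_{[n],[n]}^{\bf 1}, {\bf 1})$; for $n \geq 2$ this is non-constant with superscript matching its second coordinate, so it lies in the first piece of the defining union, while for $n = 1$ every function is constant and both $(c_1, {\bf 1})$ and $(c_1, -{\bf 1})$ are listed explicitly.

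For closure, the essential ingredient is the sign-of-composition rule in $\OD_n$: a composite of two order-preserving or two order-reversing functions is order-preserving, and a composite of one of each is order-reversing. Reading the superscript $\alpha$ of $f_{Y,X}^{\alpha}$ as an element of $\mathbb{Z}_2 = \{{\bf 1}, -{\bf 1}\}$, this says that the sign of a product of two non-constant elements of $\OD_n$ is the product of their signs, exactly matching how the second coordinate multiplies in $\OD_n \times \mathbb{Z}_2$.

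With this in hand, I would take arbitrary $(g_1,\alpha_1),(g_2,\alpha_2)\in\COD_n$ and split on whether $g_1 g_2$ is constant. If $g_1 g_2$ is non-constant, then neither $g_i$ can be constant (a constant on either side of a product yields a constant), so each $g_i = f_{Y_i,X_i}^{\alpha_i}$ with superscript matching $\alpha_i$; the sign rule then gives $g_1 g_2 = f_{W,V}^{\alpha_1\alpha_2}$ for appropriate $W,V$, so the pair $(g_1 g_2, \alpha_1\alpha_2)$ lies in the first piece of $\COD_n$. If $g_1 g_2 = c_z$ is constant, then regardless of $\alpha_1\alpha_2$ the pair $(c_z, \alpha_1\alpha_2)$ lies in the second piece because $\COD_n$ contains $(c_z,\beta)$ for both signs $\beta$ by definition.

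The only subtle point — not really an obstacle, but worth highlighting — is that the sign-matching is vacuous when the product is constant. The definition of $\COD_n$ has been arranged precisely to absorb this flexibility: constants are duplicated so that any product that collapses to a constant is still representable with the required sign. The genuine content of the closure argument is only the sign rule for composing non-constant order-preserving and order-reversing functions, which is immediate from the definitions.
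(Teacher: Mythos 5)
Your proof is correct and follows essentially the same route as the paper's: split on whether the product is constant, use the sign rule $\beta=\alpha_2\alpha_1$ for non-constant composites, and absorb the constant case using the fact that $(c_z,\beta)\in\COD_n$ for both signs $\beta$. The only additions are the (trivial) identity check and the explicit observation that a non-constant product forces both factors to be non-constant, which the paper leaves implicit.
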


\begin{proof}
Let $f_{Y_{1},X_{1}}^{\alpha_{1}},f_{Y_{2},X_{2}}^{\alpha_{2}}\in\OD_{n}$
such that $f_{Y_{2},X_{2}}^{\alpha_{2}}f_{Y_{1},X_{1}}^{\alpha_{1}}=f_{Z,W}^{\beta}$.
If $f_{Z,W}^{\beta}$ is not a constant function, then it must be
the case that $\beta=\alpha_{2}\alpha_{1}$, so 
\[
(f_{Y_{2},X_{2}}^{\alpha_{2}},\alpha_{2})(f_{Y_{1},X_{1}}^{\alpha_{1}},\alpha_{1})=(f_{Z,W}^{\beta},\alpha_{2}\alpha_{1})=(f_{Z,W}^{\beta},\beta)\in\COD_{n}.
\]
If \textbf{$f_{Z,W}^{\beta}=c_{z}$ }is a constant function then 
\[
(f_{Y_{2},X_{2}}^{\alpha_{2}},\alpha_{2})(f_{Y_{1},X_{1}}^{\alpha_{1}},\alpha_{1})=(f_{Z,W}^{\beta},\alpha_{2}\alpha_{1})=(c_{z},\alpha_{2}\alpha_{1})\in\COD_{n}.
\]
\end{proof}
We can think of $\COD_{n}$ as essentially the same monoid as $\OD_{n}$,
but we keep an artificial distinction between $f_{Y,X}^{{\bf 1}}$
and $f_{Y,X}^{-{\bf 1}}$ even when they are constant functions. Note
that this adds $n$ ``artificial'' constant functions so $|\COD_{n}|=|\OD_{n}|+n$.
Since $|\OD_{n}|=2|\Op_{n}|-n$ we have that $\mbox{\ensuremath{|\COD_{n}|=2|\Op_{n}|}}$.

The definition of $\COD_{n}$ might seem artificial, but from algebraic
point of view it has some interesting properties. It is easy to see
that $\OD_{n}$ is a homomorphic image of $\COD_{n}$. We can define
an equivalence relation $\sim$ on $\COD_{n}$ by the rule that $(f,\alpha)\sim(g,\beta)\iff f=g$.
Clearly, $\sim$ is a congruence and the quotient monoid is isomorphic
to $\OD_{n}$. Next, we show that $\COD_{n}$ is a semidirect product.
Let $\id=f_{[n],[n]}^{{\bf 1}}$ be the identity element of $\OD_{n}$
and set $\h=f_{[n],[n]}^{-{\bf 1}}$. The group $\mathbb{Z}_{2}\simeq\{\id,\h\}$
acts on $\Op_{n}$ by conjugation, for every $g\in\{\id,\h\}$ we
set 
\[
g\ast f=gfg
\]
(note that $g=g^{-1}$). We can define a semidirect product $\Op_{n}\rtimes\mathbb{Z}_{2}$.
For every $f_{1},f_{2}\in\Op_{n}$ and $g_{1},g_{2}\in\mathbb{Z}_{2}$,
composition in the semidirect product is defined by 
\[
(f_{2},g_{2})\cdot(f_{1},g_{1})=(f_{2}(g_{2}\ast f_{1}),g_{2}g_{1}).
\]

It will be convenient to set $\sgn(\id)={\bf 1}$ and $\sgn(\h)={\bf -1}$.
Clearly, $\sgn:\{\id,\h\}\to\{{\bf 1},{\bf -1}\}$ is an isomorphism.
\begin{lem}
\label{lem:Semidirect_product_iso}The function $\varphi:\Op_{n}\rtimes\mathbb{Z}_{2}\to\COD_{n}$
defined by 
\[
\varphi((f_{Y,X}^{{\bf 1}},g))=(f_{Y,X}^{{\bf 1}}g,\sgn(g))
\]
is an isomorphism of monoids.
\end{lem}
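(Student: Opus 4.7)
The plan is to verify the three requirements for a monoid isomorphism: that $\varphi$ maps into $\COD_n$, that it respects multiplication and the identity, and that it is a bijection. The governing idea throughout is the involution $\h^2 = \id$ together with the observation that right-multiplying an order-preserving function by $\h$ produces an order-reversing function of the same image (or the same constant function, if the input was constant).

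First, for well-definedness I would argue case-by-case on $g \in \{\id, \h\}$. When $g = \id$, the value $(f_{Y,X}^{{\bf 1}}, {\bf 1})$ lies in $\COD_n$ by definition. When $g = \h$, the function $f_{Y,X}^{{\bf 1}}\h$ has image $Y$ and is order-reversing; if $f_{Y,X}^{{\bf 1}}$ is not constant, then $f_{Y,X}^{{\bf 1}}\h$ is a non-constant order-reversing function and may be written $f_{Y,X'}^{-{\bf 1}}$ for an appropriate kernel set $X'$, so $(f_{Y,X'}^{-{\bf 1}}, -{\bf 1}) \in \COD_n$; if $f_{Y,X}^{{\bf 1}} = c_z$ is constant, then $f_{Y,X}^{{\bf 1}}\h = c_z$ and $(c_z, -{\bf 1}) \in \COD_n$ by definition.

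For the homomorphism property I would compute directly:
\[
\varphi((f_2, g_2)\cdot(f_1, g_1)) = \varphi((f_2 g_2 f_1 g_2, g_2 g_1)) = (f_2 g_2 f_1 g_2 \cdot g_2 g_1, \sgn(g_2 g_1)).
\]
Since $g_2^2 = \id$, the middle $g_2 g_2$ cancels and this equals $(f_2 g_2 f_1 g_1, \sgn(g_2)\sgn(g_1))$, which is precisely $\varphi((f_2, g_2)) \cdot \varphi((f_1, g_1))$ (the product in $\COD_n$ is the componentwise product inherited from $\OD_n \times \mathbb{Z}_2$). The identity is preserved because $\varphi((\id, \id)) = (\id, {\bf 1})$.

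Finally, bijectivity is forced by the formula: given $(h, \alpha) \in \COD_n$, any preimage $(f, g)$ must satisfy $\sgn(g) = \alpha$ (so $g$ is determined) and then $f = hg$ (using $g^{-1} = g$). Injectivity is immediate. For surjectivity, one only needs to confirm that $f = hg$ lies in $\Op_n$: when $\alpha = {\bf 1}$ this says $f = h$ is order-preserving, which is forced by $(h, {\bf 1}) \in \COD_n$; when $\alpha = -{\bf 1}$ this says $f = h\h$ is order-preserving, which follows because $h$ is either non-constant order-reversing (making $h\h$ the composition of two order-reversing maps) or a constant function (with $h\h = h$). The only subtle point is the well-definedness step, where the constant-function case must be handled separately because of the artificial doubling built into $\COD_n$; everything else is forced by the algebra of the involution $\h^2 = \id$.
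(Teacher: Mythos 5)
Your proof is correct and follows essentially the same route as the paper: the same case analysis on $g\in\{\id,\h\}$ for well-definedness (with the constant-function case absorbed by the definition of $\COD_n$), the same computation using $g^{2}=\id$ for multiplicativity, and the same right-cancellation of $g$ for injectivity. The only difference is that you establish surjectivity by exhibiting the explicit preimage $(hg,g)$ with $\sgn(g)=\alpha$ and checking $hg\in\Op_n$, whereas the paper deduces it from injectivity together with the cardinality count $|\Op_{n}\rtimes\mathbb{Z}_{2}|=2|\Op_{n}|=|\COD_{n}|$; both are valid one-step finishes.
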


\begin{proof}
First note that if $g=\id$ then $(f_{Y,X}^{{\bf 1}}g,\sgn(g))=(f_{Y,X}^{{\bf 1}},{\bf 1})\in\COD_{n}$.
If $g=\h$ then $f_{Y,X}^{{\bf 1}}g=f_{Z,W}^{{\bf -1}}$ for some
$Z,W$, so 
\[
(f_{Y,X}^{{\bf 1}}g,\sgn(g))=(f_{Z,W}^{{\bf -1}},{\bf -1})\in\COD_{n},
\]
hence $\varphi$ is well-defined. Next, we show that $\varphi$ is
a homomorphism. Indeed,
\begin{align*}
\varphi((f_{Y_{2},X_{2}}^{{\bf 1}},g_{2})(f_{Y_{1},X_{1}}^{{\bf 1}},g_{1})) & =\varphi((f_{Y_{2},X_{2}}^{{\bf 1}}(g_{2}\ast f_{Y_{1},X_{1}}^{{\bf 1}}),g_{2}g_{1}))\\
 & =\varphi((f_{Y_{2},X_{2}}^{{\bf 1}}g_{2}f_{Y_{1},X_{1}}^{{\bf 1}}g_{2},g_{2}g_{1}))\\
 & =(f_{Y_{2},X_{2}}^{{\bf 1}}g_{2}f_{Y_{1},X_{1}}^{{\bf 1}}g_{2}g_{2}g_{1},\sgn(g_{2}g_{1}))\\
 & =(f_{Y_{2},X_{2}}^{{\bf 1}}g_{2}f_{Y_{1},X_{1}}^{{\bf 1}}g_{1},\sgn(g_{2}g_{1}))\\
 & =(f_{Y_{2},X_{2}}^{{\bf 1}}g_{2},\sgn(g_{2}))\cdot(f_{Y_{1},X_{1}}^{{\bf 1}}g_{1},\sgn(g_{1}))\\
 & =\varphi((f_{Y_{2},X_{2}}^{{\bf 1}},g_{2}))\varphi((f_{Y_{1},X_{1}}^{{\bf 1}},g_{1})).
\end{align*}

Finally, we show that $\varphi$ is injective. If $\varphi((f_{1},g_{1}))=\varphi((f_{2},g_{2}))$
then $(f_{1}g_{1},\sgn(g_{1}))=(f_{2}g_{2},\sgn(g_{2}))$. Now, $\sgn(g_{1})=\sgn(g_{2})$
implies $g_{1}=g_{2}$ and therefore $f_{1}g_{1}=f_{2}g_{1}$. Composing
with $g_{1}$ on the right we obtain $f_{1}=f_{2}$ as required. This
shows that $\varphi$ is injective. Since $|\Op_{n}\rtimes\mathbb{Z}_{2}|=|\COD_{n}|=2|\Op_{n}|$,
we have established that $\varphi$ is an isomorphism.
\end{proof}
\begin{rem}
We remark that $\psi:\Op_{n}\rtimes\mathbb{Z}_{2}\to\OD_{n}$ defined
by $\psi((f,g))=fg$ is the natural epimorphism corresponding to the
quotient $\COD_{n}/\sim$ discussed above. Therefore, 
\[
\psi((f_{2},g_{2}))=\psi((f_{1},g_{1}))
\]
holds if and only if $(f_{2},g_{2})=(f_{1},g_{1})$ or $f_{1}=c_{z}=f_{2}$
where $c_{z}\in\Op_{n}$ is a constant function. 
\end{rem}

From now on, when considering the monoid $\COD_{n}$, we will use
the abbreviation $f_{Y,X}^{\alpha}$ for the pair $(f_{Y,X}^{\alpha},\alpha)$.
Therefore, we treat $f_{Y,X}^{{\bf 1}}$ and $f_{Y,X}^{{\bf -1}}$
as distinct elements of $\COD_{n}$, even though they represent the
same element of $\OD_{n}$ in the case that $f_{Y,X}^{{\bf 1}}$ is
a constant function. 

Clearly, Green's relations in $\COD_{n}$ are easily derived from
Green's relations of $\OD_{n}$. If $f_{Y_{2},X_{2}}^{\beta},f_{Y_{1},X_{1}}^{\alpha}\in\COD_{n}$
then:

\begin{align*}
f_{Y_{1},X_{1}}^{\alpha}\,\Rc\,f_{Y_{2},X_{2}}^{\beta} & \iff Y_{1}=Y_{2}\\
f_{Y_{1},X_{1}}^{\alpha}\,\Lc\,f_{Y_{2},X_{2}}^{\beta} & \iff X_{1}=X_{2}\\
f_{Y_{1},X_{1}}^{\alpha}\,\Jc\,f_{Y_{2},X_{2}}^{\beta} & \iff|X_{1}|=|Y_{1}|=|X_{2}|=|Y_{2}|\\
f_{Y_{1},X_{1}}^{\alpha}\,\Hc\,f_{Y_{2},X_{2}}^{\beta} & \iff X_{1}=X_{2},\enspace Y_{1}=Y_{2}.
\end{align*}
Note that in $\COD_{n}$, each $\Hc$-class has $2$ elements, and
every maximal subgroup is isomorphic to $\mathbb{Z}_{2}$.

\subsection{The algebra $\Bbbk\COD_{n}$}

In this subsection, we study the algebra $\Bbbk\COD_{n}$ where $\Bbbk$
is a field whose characteristic is not $2$. We adopt the approach
taken in \secref{Order_preserving}. Since the proofs are similar
to those already presented, we either omit them or sketch them.
\begin{lem}
The sandwich matrix $P$ of every $\Jc$-class $J_{k}$ of $\COD_{n}$
is right invertible. Therefore, for every field $\Bbbk$ and every
$X\in P_{1}([n])$ the $\Bbbk\COD_{n}$-module $\Bbbk L_{X}$ is a
projective module and 
\[
\Bbbk\COD_{n}\simeq\bigoplus_{X\in P_{1}([n])}\Bbbk L_{X}
\]
is an isomorphism of $\Bbbk\COD_{n}$-modules. 
\end{lem}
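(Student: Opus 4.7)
The plan is to imitate closely the proof of \lemref{Sandwitch_right_invertible}. The structural setup of the $\Jc$-classes of $\COD_{n}$ is essentially identical to that of $\OD_{n}$: the $\Lc$-classes of $J_{k}$ are still indexed by subsets $X\in P_{1}([n])$ with $|X|=k$, and the $\Rc$-classes by subsets $Y\in P([n])$ with $|Y|=k$. The only genuine difference is that in $\COD_{n}$ every $\Hc$-class has exactly two elements $\{(f_{Y,X}^{{\bf 1}},{\bf 1}),(f_{Y,X}^{-{\bf 1}},-{\bf 1})\}$, so that $G_{J}\simeq\mathbb{Z}_{2}$ for \emph{every} $k\geq 1$ (whereas in $\OD_{n}$ the group was trivial when $k=1$). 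Crucially, the idempotents of $\COD_{n}$ are still exactly the elements of the form $(f_{X,X}^{{\bf 1}},{\bf 1})$ for $X\in P_{1}([n])$, because a quick computation shows $(c_{z},-{\bf 1})(c_{z},-{\bf 1})=(c_{z},{\bf 1})\neq(c_{z},-{\bf 1})$, so the sign component forces $\alpha={\bf 1}$ in every idempotent.

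First I would arrange the $\Rc$-classes exactly as in the $\OD_{n}$ proof: the first ${n-1\choose k-1}$ positions index $\Rc$-classes $R_{Y}$ with $Y\in P_{1}([n])$, and I order both these $\Rc$-classes and the $\Lc$-classes by the same total order $\preceq$ refining the coordinatewise order on $k$-subsets containing $1$. Next I choose representatives $a_{Y}=(f_{Y,Z}^{{\bf 1}},{\bf 1})$ and $b_{X}=(f_{Z,X}^{{\bf 1}},{\bf 1})$, where $Z=\{1,\ldots,k\}$; these land in the correct $\Hc$-classes and satisfy $b_{Z}=a_{Z}$, which is the identity of the group $\Hc$-class $H_{Z,Z}$ (namely the idempotent $(e_{Z},{\bf 1})$).

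The key point is then identical to the $\OD_{n}$ case: if $Y\preceq X$ with $Y\neq X$, then $H_{Y,X}$ lies strictly above the diagonal in the upper block and therefore cannot contain an idempotent (this uses only the characterization of idempotents above, which is unchanged), so $[P]_{X,Y}=0$. On the diagonal we have $[P]_{X,X}=b_{X}a_{X}=(f_{Z,Z}^{{\bf 1}},{\bf 1})=1_{G_{J}}$. Hence the upper left ${n-1\choose k-1}\times{n-1\choose k-1}$ block of $P$ is upper unitriangular over $\Bbbk G_{J}$, so it is invertible over that ring, and consequently $P$ itself is right invertible. The only small bookkeeping obstacle — essentially the only place the proof really differs from the $\OD_{n}$ one — is verifying that no spurious idempotents are introduced by the new ``artificial'' copies of the constant maps; the computation above dispatches this cleanly.

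Finally, the displayed decomposition and the projectivity of each $\Bbbk L_{X}$ follow immediately by applying \thmref{Decomposition_Projective_modules} to the right invertibility just established, since $\COD_{n}$ is a finite regular monoid (it is isomorphic to $\Op_{n}\rtimes\mathbb{Z}_{2}$, and $\Op_{n}$ is regular, hence so is the semidirect product with the trivial action on regularity).
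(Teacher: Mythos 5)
Your overall strategy is exactly the intended one --- the paper's own proof of this lemma literally reads ``similar to \lemref{Sandwitch_right_invertible} and \propref{Peirce_decomposition}'' --- and you correctly isolate the one point that is genuinely new for $\COD_{n}$, namely that the sign component of an idempotent must be $\mathbf{1}$, which your computation $(f,\alpha)^{2}=(f^{2},\mathbf{1})$ establishes. However, the sentence ``the idempotents of $\COD_{n}$ are still exactly the elements of the form $(f_{X,X}^{\mathbf{1}},\mathbf{1})$'' is false, and taken literally it would make your argument internally inconsistent: in a regular $\Jc$-class every $\Rc$-class and every $\Lc$-class contains an idempotent, so $J_{k}$ has many idempotents besides the $(e_{X},\mathbf{1})$. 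For instance $(c_{2},\mathbf{1})=(f_{\{2\},\{1\}}^{\mathbf{1}},\mathbf{1})$ is idempotent with kernel set $\{1\}\neq\{2\}=\im(c_{2})$, and for $n=3$ the map fixing $3$ and sending $1,2\mapsto 2$ is an idempotent with kernel set $\{1,3\}$ and image $\{2,3\}$. If your characterization were right, no off-diagonal $\Hc$-class of the upper block would contain an idempotent at all, which is not what happens and not what the triangularity argument requires.

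The correct statement --- and the one the $\OD_{n}$ proof actually uses --- is that $(f_{Y,X}^{\alpha},\alpha)$ is idempotent if and only if $\alpha=\mathbf{1}$ and $f_{Y,X}^{\mathbf{1}}$ fixes its image pointwise; for $X,Y\in P_{1}([n])$ this forces $x_{i}\leq y_{i}$ for every $i$, hence $X\preceq Y$, which is exactly the triangularity needed. Your sign computation supplies the only ingredient that is new relative to $\OD_{n}$, so replacing the false sentence by this weaker (true) necessary condition repairs the proof with no further changes. Everything else --- the choice of representatives $a_{Y},b_{X}$, the unitriangularity of the left ${n-1\choose k-1}\times{n-1\choose k-1}$ block, the observation that $G_{J}\simeq\mathbb{Z}_{2}$ for all $k$ including $k=1$, the regularity of $\COD_{n}\simeq\Op_{n}\rtimes\mathbb{Z}_{2}$, and the final appeal to \thmref{Decomposition_Projective_modules} --- is correct.
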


\begin{proof}
Similar to \lemref{Sandwitch_right_invertible} and \propref{Peirce_decomposition}.
\end{proof}
The description of homomorphisms between induced left Schützenberger
modules is similar to the one we found in \subsecref{Construction-of-homomorphisms}.
\begin{prop}
Let $X,Y\in P_{1}([n])$.
\begin{enumerate}
\item If $|Y|=|X|$ and $\alpha\in\{{\bf 1},{\bf -1}\}$, then the function
$\mbox{\ensuremath{\rho_{Y,X}^{\alpha}:\Bbbk L_{Y}\to\Bbbk L_{X}}}$
defined on basis elements by $\rho_{Y,X}^{\alpha}(g)=g\bullet f_{Y,X}^{\alpha}$
is a non-zero homomorphism of $\Bbbk\COD_{n}$-modules. Moreover,
$\{\rho_{Y,X}^{{\bf 1}},\rho_{Y,X}^{{\bf -1}}\}$ forms a basis for
the $\Bbbk$-vector space $\Hom(\Bbbk L_{Y},\Bbbk L_{X})$.
\item If $|Y|=|X|+1$ and $\alpha\in\{{\bf 1},{\bf -1}\}$, then the function
$\delta_{Y,X}^{{\bf \alpha}}:\Bbbk L_{Y}\to\Bbbk L_{X}$ defined by
$\delta_{Y,X}^{{\bf \alpha}}(g)=g\bullet d_{Y,X}^{{\bf \alpha}}$
where 
\[
d_{Y,X}^{{\bf \alpha}}={\displaystyle \sum_{i=1}^{k}}(-1)^{i+1}f_{Y_{i},X}^{{\bf \alpha}}
\]
 is a non-zero homomorphism of $\Bbbk\COD_{n}$-modules. Moreover,
$\{\delta_{Y,X}^{{\bf 1}},\delta_{Y,X}^{{\bf -1}}\}$ forms a basis
for the $\Bbbk$-vector space $\Hom(\Bbbk L_{Y},\Bbbk L_{X})$.
\item If $|Y|\notin\{|X|,|X|+1\}$ then $\Hom(\Bbbk L_{Y},\Bbbk L_{X})=0$.
\end{enumerate}
\end{prop}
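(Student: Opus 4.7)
The plan is to mirror the strategy of \subsecref{Construction-of-homomorphisms} verbatim, exploiting the fact that in $\COD_n$ every $\Hc$-class has exactly two elements, including those of rank $1$, where the artificial distinction between $f^{\bf 1}$ and $f^{\bf -1}$ persists. I would first verify that the three homomorphisms listed are well-defined. For parts (1) and (2) the proofs of \lemref{Rho_hom} and \lemref{Delta_hom} transfer essentially line-by-line: the module operation $\bullet$, the product formula $f_{Z,Y}^\beta f_{Y,X}^\alpha = f_{Z,X}^{\beta\alpha}$ (which in $\COD_n$ just records the same data together with the sign $\beta\alpha$ in the $\mathbb{Z}_2$-component), the stability of finite monoids, and the annihilation argument of \lemref{Annihilate_d} for $d_{Y,X}^{\bf 1}$ all remain valid without change. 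For the $\alpha=-{\bf 1}$ case of (2), since $f_{X,X}^{\bf 1} \neq f_{X,X}^{\bf -1}$ in $\COD_n$ even when $|X|=1$, the element $d_{Y,X}^{\bf -1}$ may be written directly as in the statement and represents the composition $\delta_{Y,X}^{\bf 1}\rho_{X,X}^{-{\bf 1}}$, so it defines a homomorphism.

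Next I would establish linear independence of the candidate bases. For (1), evaluating $\alpha\rho_{Y,X}^{\bf 1} + \beta \rho_{Y,X}^{\bf -1}$ at $e_Y$ reduces the problem to linear independence of $f_{Y,X}^{\bf 1}$ and $f_{Y,X}^{\bf -1}$ in $\Bbbk L_X$, which in $\COD_n$ always holds --- crucially, also in the rank-$1$ case where the analogous argument for $\OD_n$ failed. For (2), $d_{Y,X}^{\bf 1}$ and $d_{Y,X}^{\bf -1}$ are supported on the disjoint basis subsets $\{f_{Y_i,X}^{\bf 1}\}$ and $\{f_{Y_i,X}^{\bf -1}\}$ of $\Bbbk L_X$, giving immediate independence. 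This yields $\dim_\Bbbk \Hom(\Bbbk L_Y, \Bbbk L_X) \geq 2$ whenever $|Y| \in \{|X|, |X|+1\}$.

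Finally I would close the argument with a dimension count, paralleling the proof of \propref{Dim_of_hom}. Define $\widetilde{H}_{Y,X}$ in $\COD_n$ by the same recipe as before; one checks directly that $|\widetilde{H}_{Y,X}| = 2$ whenever $|Y|\in\{|X|,|X|+1\}$ and $0$ otherwise, and that every element of $\COD_n$ lies in a unique such set, so $\sum_{X,Y}|\widetilde{H}_{Y,X}| = |\COD_n|$. Combining this with the regular-module decomposition $\Bbbk\COD_n \simeq \bigoplus_X \Bbbk L_X$ from the previous lemma gives
\[
|\COD_n| = \dim_\Bbbk \Bbbk\COD_n^{\op} = \sum_{X,Y\in P_1([n])} \dim_\Bbbk \Hom(\Bbbk L_Y, \Bbbk L_X) \geq \sum_{X,Y\in P_1([n])} |\widetilde{H}_{Y,X}| = |\COD_n|,
\]
forcing equality throughout. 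This simultaneously upgrades the independent pairs of (1) and (2) to bases and delivers the vanishing in (3). I do not foresee any substantive obstacle: the only place where the $\OD_n$ proof required separate handling of the rank-$1$ classes is exactly where the extra elements of $\COD_n$ supply the needed linear independence, so the argument in fact becomes cleaner than its counterpart for $\OD_n$.
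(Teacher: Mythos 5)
Your proposal is correct and follows essentially the same route the paper intends: it transplants Lemmas \ref{lem:Rho_hom}--\ref{lem:Delta_hom} and the counting argument of Proposition \ref{prop:Dim_of_hom} to $\COD_{n}$, observing that the doubled rank-one $\Hc$-classes make $|\widetilde{H}_{Y,X}|=2$ uniformly and let the dimension count close without the case split needed for $\OD_{n}$. The only slip is notational: the composite defining $\delta_{Y,X}^{\bf -1}$ should be written $\rho_{X,X}^{-{\bf 1}}\delta_{Y,X}^{{\bf 1}}$ (apply $\delta_{Y,X}^{{\bf 1}}$ first), not $\delta_{Y,X}^{{\bf 1}}\rho_{X,X}^{-{\bf 1}}$.
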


Let $\mathcal{CLC}_{n}$ be the linear category whose objects are
the modules of the form $\Bbbk L_{X}$ for $X\in P_{1}([n])$, and
the morphisms are all homomorphisms between them. So the only difference
between $\mathcal{CLC}_{n}$ and $\mathcal{LC}_{n}$ is that $\mathcal{CLC}_{n}$
has an extra morphism $\rho_{Y,X}^{{\bf -1}}:\Bbbk L_{Y}\to\Bbbk L_{X}$
if $|Y|=|X|=1$ (that is, $Y=X=\{1\}$) and an extra morphism $\delta_{Y,X}^{{\bf {\bf -1}}}:\Bbbk L_{Y}\to\Bbbk L_{X}$
if $|Y|=2$, $|X|=1$.

We know that $\Bbbk\COD_{n}$ is isomorphic to $\mathscr{A}[\mathcal{CLC}_{n}^{\op}]$.
Here we prefer not to switch to the skeleton of the category. Nevertheless,
it is clear that the description of composition in the skeleton obtained
in \subsecref{Composition-of-homomorphisms} also describes composition
in the entire category. The morphism $\rho_{k}^{\alpha}$ is a representative
of $\rho_{Y,X}^{\alpha}$ where $X,Y\in P_{1}([n])$ with $|Y|=|X|=k$,
and $\delta_{k}^{\alpha}$ is a representative of $\delta_{Y,X}^{\alpha}$
where $X,Y\in P_{1}([n])$ with $|Y|=k+1$, $|X|=k$. Therefore, from
\corref{Relation_of_SL} we can deduce:
\begin{cor}
\label{cor:Relation_of_SCL}Let $\alpha,\beta\in\{{\bf 1},-{\bf 1}\}$.
The following identities hold in the linear category $\mathcal{CLC}_{n}^{\op}$:

\begin{align*}\delta_{Z,Y}^{\beta}\delta_{Y,X}^{\alpha}&=0&&\hspace{-1cm}(|Z|=|Y|+1=|X|+2)\\\rho_{Z,Y}^{\beta}\rho_{Y,X}^{\alpha}&=\rho_{Z,X}^{\beta\alpha}&&\hspace{-1cm}(|Z|=|Y|=|X|)\\\delta_{Z,Y}^{\beta}\rho_{Y,X}^{\alpha}&=\delta_{Z,X}^{\beta\alpha}&&\hspace{-1cm}(|Z|=|Y|+1=|X|+1)\\\rho_{Z,Y}^{{\bf 1}}\delta_{Y,X}^{\alpha}&=\delta_{Z,X}^{\alpha}&&\hspace{-1cm}(|Z|=|Y|=|X|+1)\\\rho_{Z,Y}^{-{\bf 1}}\delta_{Y,X}^{\alpha}&=(-1)^{|X|}\delta_{Z.X}^{-\alpha}&&\hspace{-1cm}(|Z|=|Y|=|X|+1)\end{align*}
\end{cor}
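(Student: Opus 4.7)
The plan is to verify the five identities by direct computation in $\mathcal{CLC}_n$ and then dualize to the opposite category. The essential inputs are the concrete descriptions $\rho_{Y,X}^{\alpha}(g) = g f_{Y,X}^{\alpha}$ and $\delta_{Y,X}^{\alpha}(g) = g d_{Y,X}^{\alpha}$, together with the multiplicative rule $f_{Z,Y}^{\beta} f_{Y,X}^{\alpha} = f_{Z,X}^{\beta\alpha}$, which is valid in $\COD_n$ by virtue of the semidirect product structure established in the previous subsection. Since each identity depends on $X, Y, Z$ only through their cardinalities, the arguments carried out in \subsecref{Composition-of-homomorphisms} for the skeletal representatives $\rho_k^\alpha, \delta_k^\alpha$ apply verbatim; equivalently, one may view these as the transported versions of the relations in \corref{Relation_of_SL}.

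The first four identities follow quickly. For $\rho_{Z,Y}^\beta \rho_{Y,X}^\alpha$ the multiplicative rule gives $\rho_{Z,X}^{\beta\alpha}$ directly. For $\delta_{Z,Y}^\beta \delta_{Y,X}^\alpha$, the composition lies in a hom-space which, by the dimension proposition, is zero whenever $|Z| = |X|+2$. The identities $\delta_{Z,Y}^\beta \rho_{Y,X}^\alpha = \delta_{Z,X}^{\beta\alpha}$ and $\rho_{Z,Y}^{\mathbf{1}} \delta_{Y,X}^\alpha = \delta_{Z,X}^\alpha$ follow termwise from the multiplicative rule, since the index set of the alternating sum defining $d_{Y,X}^\alpha$ is preserved under left- or right-multiplication by an order-preserving function sending $Y_i \mapsto Z_i$ (respectively, fixing the summands $f_{Y_i,X}^\alpha$).

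The only substantive calculation is the fifth identity, $\rho_{Z,Y}^{-\mathbf{1}} \delta_{Y,X}^\alpha = (-1)^{|X|} \delta_{Z,X}^{-\alpha}$. Here the obstacle, such as it is, lies in tracking the sign produced by the order-reversing function. Left multiplication by $f_{Z,Y}^{-\mathbf{1}}$ sends the $i$th element of $Y$ to the $(k+2-i)$th element of $Z$ (writing $k = |X|$), so that $f_{Z,Y}^{-\mathbf{1}} f_{Y_i, X}^\alpha = f_{Z_{k+2-i}, X}^{-\alpha}$. Substituting into the defining sum of $d_{Y,X}^\alpha$ and reindexing by $j = k+2-i$ converts $\sum_{i=1}^{k+1}(-1)^{i+1} f_{Z_{k+2-i}, X}^{-\alpha}$ into $(-1)^{k} \sum_{j=1}^{k+1}(-1)^{j+1} f_{Z_j, X}^{-\alpha} = (-1)^{|X|}\, d_{Z,X}^{-\alpha}$. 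This is precisely the same reindexing argument used in \subsecref{Composition-of-homomorphisms} for the skeletal representatives, and no new ingredient is required. Altogether, the identities follow from careful bookkeeping around the multiplicative rule and the definition of $d_{Y,X}^\alpha$, with the index-reversal producing the single nontrivial sign.
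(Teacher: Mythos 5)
Your proposal is correct and follows essentially the same route as the paper: the paper simply observes that the skeletal computations of \subsecref{Composition-of-homomorphisms} depend only on the cardinalities $|X|,|Y|,|Z|$ and transports \corref{Relation_of_SL}, which is exactly your framing, and your direct verification (including the reindexing $j=k+2-i$ that produces the sign $(-1)^{|X|}$ in the fifth identity) reproduces the paper's skeletal calculation. You also correctly flag the one point specific to $\COD_{n}$, namely that $f_{Z,Y}^{\beta}f_{Y,X}^{\alpha}=f_{Z,X}^{\beta\alpha}$ holds without exception there because of the semidirect product structure.
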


As in \subsecref{Change_of_bases}, we can define 
\[
\epsilon_{Y,X}^{+}=\frac{\rho_{Y,X}^{{\bf 1}}+\rho_{Y,X}^{{\bf -1}}}{2},\quad\epsilon_{Y,X}^{-}=\frac{\rho_{Y,X}^{{\bf 1}}-\rho_{Y,X}^{{\bf -1}}}{2}
\]
\[
\Delta_{Y,X}^{^{+}}=\frac{\delta_{Y,X}^{{\bf 1}}+\delta_{Y,X}^{{\bf -1}}}{2},\quad\Delta_{Y,X}^{^{-}}=\frac{\delta_{Y,X}^{{\bf 1}}-\delta_{Y,X}^{{\bf -1}}}{2}.
\]
As in \lemref{New_Relations} we obtain:
\begin{lem}
\label{lem:New_Relations_CLC} Let $\alpha,\beta\in\{+,-\}$. The
following identities hold in the linear category $\mathcal{CLC}_{n}^{\op}$:

\begin{align*}\Delta_{Z,Y}^{\beta}\Delta_{Y,X}^{\alpha}&=0 &&\hspace{-1cm}(|Z|=|Y|+1=|X|+2)\\\epsilon_{Z,Y}^{\alpha}\epsilon_{Y,X}^{\alpha}&=\epsilon_{Z,X}^{\alpha}&&\hspace{-1cm}(|Z|=|Y|=|X|)\\\epsilon_{Z,Y}^{\alpha}\epsilon_{Y,X}^{-\alpha}&=0&&\hspace{-1cm}(|Z|=|Y|=|X|)\\\Delta_{Z,Y}^{\alpha}\epsilon_{Y,X}^{\alpha}&=\Delta_{Z,X}^{\alpha}&&\hspace{-1cm}(|Z|=|Y|+1=|X|+1)\\\Delta_{Z,Y}^{\alpha}\epsilon_{Y,X}^{-\alpha}&=0&&\hspace{-1cm}(|Z|=|Y|+1=|X|+1)\\\epsilon_{Z,Y}^{\alpha}\Delta_{Y,X}^{\alpha}&=\begin{cases}\Delta_{Z,X}^{\alpha} & |X|\:\:\text{even}\\0 & |X|\:\:\text{odd}\end{cases}&&\hspace{-1cm}(|Z|=|Y|=|X|+1)\\\epsilon_{Z,Y}^{-\alpha}\Delta_{Y,X}^{\alpha}&=\begin{cases}0 & |X|\:\:\text{even}\\\Delta_{Z,X}^{\alpha} & |X|\:\:\text{odd}\end{cases}&&\hspace{-1cm}(|Z|=|Y|=|X|+1)\end{align*}
\end{lem}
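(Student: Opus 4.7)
The plan is to verify each identity by direct computation, expanding the $\epsilon$-morphisms and $\Delta$-morphisms into their defining linear combinations of $\rho$-morphisms and $\delta$-morphisms, and then invoking the corresponding relation from \corref{Relation_of_SCL}. This mirrors verbatim the proof of \lemref{New_Relations}, which handled the analogous identities in the skeleton $\mathcal{SL}_n^{\op}$; the only real change is that we now carry around the subscripts $Y,X$ (and the parity condition is phrased in terms of $|X|$ rather than $k$), but the algebraic manipulations are identical. The hypothesis that $\ch \Bbbk \neq 2$ is used only to guarantee that the denominators $2$ and $4$ appearing in the expansions are invertible.

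For $\Delta_{Z,Y}^{\beta}\Delta_{Y,X}^{\alpha}=0$, expand into a linear combination of four terms of the form $\delta_{Z,Y}^{\gamma}\delta_{Y,X}^{\delta}$, each of which vanishes by the first relation of \corref{Relation_of_SCL}. For the two $\epsilon\epsilon$ identities and the two $\Delta\epsilon$ identities, expanding and applying $\rho_{Z,Y}^{\beta}\rho_{Y,X}^{\alpha}=\rho_{Z,X}^{\beta\alpha}$ (respectively $\delta_{Z,Y}^{\beta}\rho_{Y,X}^{\alpha}=\delta_{Z,X}^{\beta\alpha}$) shows that the four cross terms either combine with coefficient $1$ into $\epsilon_{Z,X}^{\alpha}$ (respectively $\Delta_{Z,X}^{\alpha}$) or cancel in pairs to give zero, depending on whether the two superscripts on the left agree or differ. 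This is the same pattern as for orthogonal idempotents, which is no coincidence: after restriction to the endomorphism subcategory, the $\epsilon$'s are precisely the primitive orthogonal idempotents we constructed in \lemref{Primitive_idpts}.

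The last two identities, which are the only parity-dependent ones, are obtained by combining the relation $\rho_{Z,Y}^{-{\bf 1}}\delta_{Y,X}^{\alpha}=(-1)^{|X|}\delta_{Z,X}^{-\alpha}$ with $\rho_{Z,Y}^{{\bf 1}}\delta_{Y,X}^{\alpha}=\delta_{Z,X}^{\alpha}$. When $|X|$ is even, the sign $(-1)^{|X|}$ is $+1$, so in the expansion of $\epsilon_{Z,Y}^{+}\Delta_{Y,X}^{+}$ all four $\delta$-terms combine constructively into $\Delta_{Z,X}^{+}$ while the four terms of $\epsilon_{Z,Y}^{-}\Delta_{Y,X}^{+}$ cancel, and analogously for the other sign combinations; when $|X|$ is odd, the sign flip reverses which pairings survive and which vanish. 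The main obstacle is simply the bookkeeping of these signs; no conceptual difficulty arises, and each case follows the model calculation displayed at the end of the proof of \lemref{New_Relations}.
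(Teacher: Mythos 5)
Your proposal is correct and takes the same route as the paper, which gives no separate proof for this lemma but simply says "As in \lemref{New_Relations} we obtain", i.e., expand the $\epsilon$'s and $\Delta$'s in terms of $\rho$'s and $\delta$'s and apply the relations of \corref{Relation_of_SCL}, with the parity now tracked by $|X|$ in place of $k$. Your sign bookkeeping for the two parity-dependent identities matches the model computation at the end of the proof of \lemref{New_Relations}.
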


Set $\mathcal{CA}_{n}=\mathcal{\mathscr{A}}(\mathcal{CLC}_{n}^{\op})$
and note that $\mathcal{CA}_{n}\simeq\Bbbk\COD_{n}$. As in \lemref{Primitive_idpts},
the set 
\[
E=\{\epsilon_{X,X}^{+},\epsilon_{X,X}^{-}\mid X\in P_{1}([n])\}
\]
is a complete set of primitive orthogonal idempotents for $\mathcal{CA}_{n}$.

\subsection{A decomposition into a direct product}

Let $\mathcal{D}_{n}$ be the category defined as follows. Its objects
are in one-to-one correspondence with sets $X\in P_{1}([n])$. For
every $X,Y\in P_{1}([n])$ with $|Y|=|X|$ there is a morphisms $r_{Y,X}$
from $X$ to $Y$. For every $X,Y\in P_{1}([n])$ with $|Y|=|X|+1$
there is a morphisms $d_{Y,X}$ from $X$ to $Y$. Moreover, for every
two objects $X,Y\in P_{1}([n])$, there is a zero morphism $z_{Y,X}$.
Composition is defined as follows:
\begin{align*}
r_{Z,Y}r_{Y,X} & =r_{Z,X}\quad(|Z|=|Y|=|X|)\\
r_{Z,Y}d_{Y,X} & =d_{Z,X}\quad(|Z|=|Y|=|X|+1)\\
d_{Z,Y}r_{Y,X} & =d_{Z,X}\quad(|Z|=|Y|+1=|X|+1)\\
d_{Z,Y}d_{Y,X} & =z_{Z,X}\quad(|Z|=|Y|+1=|X|+2)
\end{align*}
In addition, the zero morphisms have the expected absorbing properties.
It is proved in \cite[Theorem 5.7]{Stein2024preprint} that the contracted
category algebra $\Bbbk_{0}\mathcal{D}_{n}$ is isomorphic to the
monoid algebra $\Bbbk\Op_{n}$. We will show now that it is also closely
related to $\Bbbk\COD_{n}$ through the algebra $\mathcal{CA}_{n}$.
\begin{defn}
We define a function $F:\Bbbk_{0}\mathcal{D}_{n}\to\mathcal{CA}_{n}$
on basis elements as follows: 

\begin{align*}
F(r_{Y,X}) & =\begin{cases}
\epsilon_{Y,X}^{+} & |X|=0\text{ or }1\pmod 4\\
\epsilon_{Y,X}^{-} & |X|=2\text{ or }3\pmod 4
\end{cases}\\
F(d_{Y,X}) & =\begin{cases}
\Delta_{Y,X}^{+} & |X|=0\text{ or }1\pmod 4\\
\Delta_{Y,X}^{-} & |X|=2\text{ or }3\pmod 4
\end{cases}
\end{align*}
\end{defn}

\begin{lem}
\label{lem:AlternatingFunctionIsHom}The function $F$ is a homomorphism
of $\Bbbk$-algebras.
\end{lem}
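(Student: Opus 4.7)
The plan is to verify that $F(m_2 m_1) = F(m_2) F(m_1)$ on the basis elements of $\Bbbk_0 \mathcal{D}_n$, which are the nonzero morphisms $r_{Y,X}$ and $d_{Y,X}$ of $\mathcal{D}_n$. Since $F$ is extended linearly, this is enough to establish the homomorphism property. I would organize the verification by the four composition rules of $\mathcal{D}_n$, plus the case when the product is undefined.

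For undefined compositions in $\mathcal{D}_n$, the product in $\Bbbk_0 \mathcal{D}_n$ is $0$, and the images $F(m_2), F(m_1)$ are morphisms of $\mathcal{CLC}_n^{\op}$ whose middle objects do not match, so their product in $\mathcal{CA}_n = \mathscr{A}(\mathcal{CLC}_n^{\op})$ is also $0$. For $d_{Z,Y} d_{Y,X}$ (a zero morphism of $\mathcal{D}_n$, hence $0$ in $\Bbbk_0 \mathcal{D}_n$), the first relation of \lemref{New_Relations_CLC} gives $\Delta_{Z,Y}^\beta \Delta_{Y,X}^\alpha = 0$. For $r_{Z,Y} r_{Y,X}$ and for $d_{Z,Y} r_{Y,X}$, both factors share the same superscript because $|Z|$, $|Y|$ and $|X|$ (or, in the second case, $|Y|$ and $|X|$) lie in the same residue class mod $4$; \lemref{New_Relations_CLC} then yields $\epsilon_{Z,Y}^\alpha \epsilon_{Y,X}^\alpha = \epsilon_{Z,X}^\alpha = F(r_{Z,X})$ and $\Delta_{Z,Y}^\alpha \epsilon_{Y,X}^\alpha = \Delta_{Z,X}^\alpha = F(d_{Z,X})$ respectively.

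The main obstacle is the remaining rule $r_{Z,Y} d_{Y,X}$ with $|Z|=|Y|=|X|+1$, because here the superscripts of $F(r_{Z,Y})$ and $F(d_{Y,X})$ are governed by consecutive integers $|Y|$ and $|X|$, which may lie in different residue classes mod $4$. The relevant relations in \lemref{New_Relations_CLC} depend on the parity of $|X|$: for $|X|$ even one needs matching superscripts ($\epsilon_{Z,Y}^\alpha \Delta_{Y,X}^\alpha = \Delta_{Z,X}^\alpha$), while for $|X|$ odd one needs opposite superscripts ($\epsilon_{Z,Y}^{-\alpha}\Delta_{Y,X}^\alpha = \Delta_{Z,X}^\alpha$). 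I would verify correctness by going through the four residue classes of $|X|$ mod $4$:
\begin{itemize}
\item $|X|\equiv 0$: $|Y|\equiv 1$, both signs $+$; $|X|$ even requires equal signs. \checkmark
\item $|X|\equiv 1$: $|Y|\equiv 2$, signs $+$ and $-$; $|X|$ odd requires opposite signs. \checkmark
\item $|X|\equiv 2$: $|Y|\equiv 3$, both signs $-$; $|X|$ even requires equal signs. \checkmark
\item $|X|\equiv 3$: $|Y|\equiv 0$, signs $-$ and $+$; $|X|$ odd requires opposite signs. \checkmark
\end{itemize}

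In each case the product equals $\Delta_{Z,X}^{\alpha}$, which is exactly $F(d_{Z,X})$ since the sign of $F(d_{Z,X})$ is determined by $|X|$. The ``mod $4$'' pattern in the definition of $F$ is precisely what is needed to reconcile the parity dependence in the last relation of \lemref{New_Relations_CLC} with the purely index-based composition law of $\mathcal{D}_n$. Once this case is cleared, all four composition rules are verified and $F$ is a $\Bbbk$-algebra homomorphism.
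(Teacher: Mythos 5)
Your proposal is correct and follows essentially the same route as the paper's proof: verify multiplicativity case by case on the four composition rules (with vanishing for non-composable pairs and for $d_{Z,Y}d_{Y,X}$), observe that the superscripts automatically match for $r\cdot r$ and $d\cdot r$ since they are governed by equal cardinalities, and resolve the only delicate case $r_{Z,Y}d_{Y,X}$ by checking the four residues of $|X|$ modulo $4$ against the parity-dependent relation of Lemma \ref{lem:New_Relations_CLC}. Nothing further is needed.
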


\begin{proof}
We need to show that $F$ is multiplicative. Let $m,m^{\prime}$ be
two morphisms in $\mathcal{D}_{n}$. There are four cases:
\begin{align*}
m^{\prime} & =d_{Z,W},\,m=d_{Y,X}\quad\quad m^{\prime}=r_{Z,W},\,m=r_{Y,X}\\
m^{\prime} & =d_{Z,W},\,m=r_{Y,X}\quad\quad m^{\prime}=r_{Z,W},\,m=d_{Y,X}
\end{align*}
 If $W\neq Y$ then clearly $m^{\prime}m=0$ in $\Bbbk_{0}\mathcal{D}_{n}$
and $F(m^{\prime})F(m)=0$ in $\mathcal{CA}_{n}$. Therefore, it is
left to handle these four cases when $W=Y$. First note that 
\[
F(d_{Z,Y})F(d_{Y,X})=\Delta_{Z,Y}^{\beta}\Delta_{Y,X}^{\alpha}=0=F(0)=F(d_{Z,Y}d_{Y,X})
\]
since $d_{Z,Y}d_{Y,X}=0$ in the contracted algebra $\Bbbk_{0}\mathcal{D}_{n}$.
Next,
\begin{align*}
F(r_{Z,Y})F(r_{Y,X}) & =\begin{cases}
\epsilon_{Z,Y}^{+}\epsilon_{Y,X}^{+} & |X|=0\text{ or }1\pmod 4\\
\epsilon_{Z,Y}^{-}\epsilon_{Y,X}^{-} & |X|=2\text{ or }3\pmod 4
\end{cases}\\
 & =\begin{cases}
\epsilon_{Z,X}^{+} & |X|=0\text{ or }1\pmod 4\\
\epsilon_{Z,X}^{-} & |X|=2\text{ or }3\pmod 4
\end{cases}=F(r_{Z,X})=F(r_{Z,Y}r_{Y,X})
\end{align*}
and 
\begin{align*}
F(d_{Z,Y})F(r_{Y,X}) & =\begin{cases}
\Delta_{Z,Y}^{+}\epsilon_{Y,X}^{+} & |X|=0\text{ or }1\pmod 4\\
\Delta_{Z,Y}^{-}\epsilon_{Y,X}^{-} & |X|=2\text{ or }3\pmod 4
\end{cases}\\
 & =\begin{cases}
\Delta_{Z,X}^{+} & |X|=0\text{ or }1\pmod 4\\
\Delta_{Z,X}^{-} & |X|=2\text{ or }3\pmod 4
\end{cases}=F(d_{Z,X})=F(d_{Z,Y}r_{Y,X}).
\end{align*}
Finally, we consider $F(r_{Z,Y})F(d_{Y,X})$ where $|Z|=|Y|=|X|+1$.
We check all the options: 
\begin{itemize}
\item If $|X|=0\pmod 4$ then 
\[
F(r_{Z,Y})F(d_{Y,X})=\epsilon_{Z,Y}^{+}\Delta_{Y,X}^{+}=\Delta_{Z,X}^{+}=F(d_{Z,X})=F(r_{Z,Y}d_{Y,X}).
\]
\item If $|X|=1\pmod 4$ then 
\[
F(r_{Z,Y})F(d_{Y,X})=\epsilon_{Z,Y}^{-}\Delta_{Y,X}^{+}=\Delta_{Z,X}^{+}=F(d_{Z,X})=F(r_{Z,Y}d_{Y,X}).
\]
\item If $|X|=2\pmod 4$ then 
\[
F(r_{Z,Y})F(d_{Y,X})=\epsilon_{Z,Y}^{-}\Delta_{Y,X}^{-}=\Delta_{Z,X}^{-}=F(d_{Z,X})=F(r_{Z,Y}d_{Y,X}).
\]
\item If $|X|=3\pmod 4$ then 
\[
F(r_{Z,Y})F(d_{Y,X})=\epsilon_{Z,Y}^{+}\Delta_{Y,X}^{-}=\Delta_{Z,X}^{-}=F(d_{Z,X})=F(r_{Z,Y}d_{Y,X}).
\]
\end{itemize}
This finishes the proof.
\end{proof}
\begin{defn}
Define another function $F^{\prime}:\Bbbk_{0}\mathcal{D}_{n}\to\mathcal{CA}_{n}$
on basis elements as follows: 
\begin{align*}
F^{\prime}(r_{Y,X}) & =\begin{cases}
\epsilon_{Y,X}^{-} & |X|=0\text{ or }1\pmod 4\\
\epsilon_{Y,X}^{+} & |X|=2\text{ or }3\pmod 4
\end{cases}\\
F^{\prime}(d_{Y,X}) & =\begin{cases}
\Delta_{Y,X}^{-} & |X|=0\text{ or }1\pmod 4\\
\Delta_{Y,X}^{+} & |X|=2\text{ or }3\pmod 4
\end{cases}
\end{align*}
\end{defn}

The function $F^{\prime}$ is also a homomorphism of $\Bbbk$-algebras,
and the proof is similar to that of \lemref{AlternatingFunctionIsHom}.
We now show that the images of $F$ and $F^{\prime}$ are in two distinct
disconnected parts of the algebra.
\begin{lem}
\label{lem:Disconnected_Cat}For every morphisms $m,m^{\prime}$ in
$\mathcal{D}_{n}$, we have 
\[
F(m)F^{\prime}(m^{\prime})=F^{\prime}(m^{\prime})F(m)=0.
\]
\end{lem}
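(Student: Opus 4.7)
The plan is to case-split on the types of $m$ and $m'$. Each morphism in $\mathcal{D}_n$ is either a zero morphism (in which case $F(m)$ or $F'(m')$ is zero already) or of type $r_{?,?}$ or $d_{?,?}$. Moreover, in the algebra $\mathcal{CA}_n$, a product of two morphisms vanishes automatically whenever the hom-set objects fail to match, so I only need to consider the four cases $(m,m') \in \{(r,r),(d,r),(r,d),(d,d)\}$ in which the composition is syntactically defined. Let me write $\sigma : \mathbb{Z} \to \{+,-\}$ for the assignment $\sigma(k) = +$ if $k \equiv 0,1 \pmod 4$ and $\sigma(k) = -$ if $k \equiv 2,3 \pmod 4$; then by definition $F$ attaches the sign $\sigma(|X|)$ to a morphism with source $X$, while $F'$ attaches $-\sigma(|X|)$.

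Cases $(d,d)$ and $(r,r)$ are immediate from Lemma~\ref{lem:New_Relations_CLC}. In the $(d,d)$ case one has $F(m)F'(m') = \Delta_{Z,Y}^{\beta}\Delta_{Y,X}^{\alpha} = 0$ directly. In the $(r,r)$ case, both $F(m)$ and $F'(m')$ live among the $\epsilon$'s and the common middle object $Y$ has $|Y| = |X|$, so the two signs are $\sigma(|X|)$ and $-\sigma(|X|)$; the relation $\epsilon^{\alpha}\epsilon^{-\alpha} = 0$ gives the claim. The case $(d,r)$ is similar: $F(m) = \Delta^{\sigma(|X|)}$ and $F'(m') = \epsilon^{-\sigma(|X|)}$ (both indexed by the same $|X|$), and the relation $\Delta^{\alpha}\epsilon^{-\alpha} = 0$ closes the case.

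The delicate case is $(r,d)$, where $F(m) = \epsilon_{Z,Y}^{\sigma(|Y|)}$ with $|Y| = |X|+1$ and $F'(m') = \Delta_{Y,X}^{-\sigma(|X|)}$. Here the sign of $F(m)$ depends on $|X|+1$ while the sign of $F'(m')$ depends on $|X|$, and the vanishing criterion for $\epsilon^{\alpha}\Delta^{\beta}$ depends on the parity of $|X|$. The point of the mod-$4$ convention is precisely to synchronize these: when $|X|$ is even, $\sigma(|Y|) = \sigma(|X|+1) = \sigma(|X|)$, so the two signs $\sigma(|Y|)$ and $-\sigma(|X|)$ are opposite, matching the "$|X|$ even with opposite signs vanishes" rule; when $|X|$ is odd, $\sigma(|Y|) = -\sigma(|X|)$, so both signs of $\epsilon$ and $\Delta$ are equal, matching the "$|X|$ odd with equal signs vanishes" rule. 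Either way, $\epsilon_{Z,Y}^{\sigma(|Y|)}\Delta_{Y,X}^{-\sigma(|X|)} = 0$.

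The identity $F'(m')F(m) = 0$ follows from a completely symmetric case analysis, since the sign-opposition between $F$ and $F'$ is symmetric and the relations of Lemma~\ref{lem:New_Relations_CLC} involve $\epsilon$ and $\Delta$ in the same way on either side of a product (aside from the parity condition, which is what the mod-$4$ convention is tuned to handle). The main obstacle is simply the bookkeeping in the $(r,d)$ case: one has to verify that the combination of (i) the residue of $|X|$ mod $4$, (ii) the fact that the $\sigma$ pattern has period $4$ while parity has period $2$, and (iii) the $F'$ sign-flip, always lands in the vanishing side of the formula $\epsilon^{\alpha}\Delta^{\beta}$.
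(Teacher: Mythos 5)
Your proof is correct. Both you and the paper reduce to the same four type-cases $(d,d)$, $(r,r)$, $(d,r)$, $(r,d)$ and dispose of $(d,d)$ and $(r,r)$ identically via the relations $\Delta^{\beta}\Delta^{\alpha}=0$ and $\epsilon^{\alpha}\epsilon^{-\alpha}=0$ of \lemref{New_Relations_CLC}. The difference is in the mixed cases. You handle them by direct sign-chasing: you verify that the mod-$4$ convention synchronizes $\sigma(|X|+1)$ with $\sigma(|X|)$ in exactly the way needed to land on the vanishing branch of the $\epsilon^{\gamma}\Delta^{\beta}$ formula, for each parity of $|X|$ -- and your bookkeeping checks out (including the observation that passing from $F(m)F'(m')$ to $F'(m')F(m)$ negates both signs simultaneously, which preserves the equal-versus-opposite dichotomy that the relations depend on). The paper instead avoids redoing this bookkeeping entirely: it factors $d_{Z,Y}=d_{Z,Y}r_{Y,Y}$ (respectively $d_{Y,X}=r_{Y,Y}d_{Y,X}$), uses the already-established multiplicativity of $F$ and $F'$ from \lemref{AlternatingFunctionIsHom} to pull out a factor $F'(r_{Y,Y})F(r_{Y,X})$ or $F'(r_{Z,Y})F(r_{Y,Y})$, and kills it by the $(r,r)$ case. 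The paper's route is shorter and localizes all the mod-$4$ verification inside the proof that $F$ and $F'$ are homomorphisms; yours is self-contained for this lemma but duplicates that verification for the cross terms. Both are valid.
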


\begin{proof}
We show $F^{\prime}(m^{\prime})F(m)=0$ since the proof of the other
statement is similar. There are four cases:
\begin{align*}
m^{\prime} & =d_{Z,W},\,m=d_{Y,X}\quad\quad m^{\prime}=r_{Z,W},\,m=r_{Y,X}\\
m^{\prime} & =d_{Z,W},\,m=r_{Y,X}\quad\quad m^{\prime}=r_{Z,W},\,m=d_{Y,X}
\end{align*}
 If $W\neq Y$ then clearly 
\[
F^{\prime}(m^{\prime})F(m)=0
\]
so it is left to handle these four cases when $W=Y$. For every value
of $|X|$, 
\[
F^{\prime}(d_{Z,Y})F(d_{Y,X})=\Delta_{Z,Y}^{\beta}\Delta_{Y,X}^{\alpha}=0
\]
for some $\alpha,\beta\in\{+,-\}$. Next, 
\[
F^{\prime}(r_{Z,Y})F(r_{Y,X})=\begin{cases}
\epsilon_{Z,Y}^{-}\epsilon_{Y,X}^{+} & |X|=0\text{ or }1\pmod 4\\
\epsilon_{Z,Y}^{+}\epsilon_{Y,X}^{-} & |X|=2\text{ or }3\pmod 4
\end{cases}=0.
\]
Finally, 
\[
F^{\prime}(d_{Z,Y})F(r_{Y,X})=F^{\prime}(d_{Z,Y}r_{Y,Y})F(r_{Y,X})=F^{\prime}(d_{Z,Y})F^{\prime}(r_{Y,Y})F(r_{Y,X})=0
\]
and likewise
\[
F^{\prime}(r_{Z,Y})F(d_{Y,X})=F^{\prime}(r_{Z,Y})F(r_{Y,Y}d_{Y,X})=F^{\prime}(r_{Z,Y})F(r_{Y,Y})F(d_{Y,X})=0.
\]
\end{proof}
We can now conclude.
\begin{prop}
There is an isomorphism of $\Bbbk$-algebras $\Bbbk_{0}\mathcal{D}_{n}\times\Bbbk_{0}\mathcal{D}_{n}\simeq\mathcal{CA}_{n}$.
\end{prop}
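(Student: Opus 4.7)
The natural plan is to combine $F$ and $F'$ into a single map
\[
\Phi : \Bbbk_{0}\mathcal{D}_{n} \times \Bbbk_{0}\mathcal{D}_{n} \longrightarrow \mathcal{CA}_{n}, \qquad \Phi(x,y) = F(x) + F'(y),
\]
and prove that $\Phi$ is the desired isomorphism of $\Bbbk$-algebras.

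The first step is to verify that $\Phi$ is an algebra homomorphism. Linearity is immediate from that of $F$ and $F'$. For multiplicativity, the expansion
\[
\Phi(x_{1},y_{1})\,\Phi(x_{2},y_{2}) = F(x_{1})F(x_{2}) + F(x_{1})F'(y_{2}) + F'(y_{1})F(x_{2}) + F'(y_{1})F'(y_{2})
\]
collapses to $F(x_{1}x_{2}) + F'(y_{1}y_{2}) = \Phi\bigl((x_{1},y_{1})(x_{2},y_{2})\bigr)$ once one kills the two cross terms using bilinearity and Lemma~\ref{lem:Disconnected_Cat}, and then applies multiplicativity of $F$ (Lemma~\ref{lem:AlternatingFunctionIsHom}) together with the corresponding statement for $F'$. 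For the unit, observe that $F(1_{\Bbbk_{0}\mathcal{D}_{n}}) + F'(1_{\Bbbk_{0}\mathcal{D}_{n}}) = \sum_{X \in P_{1}([n])} (\epsilon_{X,X}^{+} + \epsilon_{X,X}^{-}) = \sum_{X} \rho_{X,X}^{\mathbf{1}}$, which is the identity of $\mathcal{CA}_{n}$.

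The second step is surjectivity. Inspecting the four residues of $|X| \bmod 4$ in the definitions of $F$ and $F'$, one sees that for every pair $X,Y$ with $|Y|=|X|$, one of $F(r_{Y,X}), F'(r_{Y,X})$ equals $\epsilon_{Y,X}^{+}$ and the other equals $\epsilon_{Y,X}^{-}$; an identical dichotomy holds for $d_{Y,X}$ and $\Delta_{Y,X}^{\pm}$ when $|Y|=|X|+1$. Hence the image of $\Phi$ contains all the morphisms $\epsilon_{Y,X}^{\pm}$ and $\Delta_{Y,X}^{\pm}$, from which one recovers every $\rho_{Y,X}^{\alpha}$ and every $\delta_{Y,X}^{\alpha}$ by taking $\Bbbk$-linear combinations; these latter elements span $\mathcal{CA}_{n}$.

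The proof closes with a dimension count: combining $\Bbbk_{0}\mathcal{D}_{n} \simeq \Bbbk\Op_{n}$ from \cite{Stein2024preprint} with $|\COD_{n}| = 2|\Op_{n}|$ established earlier, both the domain and the codomain of $\Phi$ have $\Bbbk$-dimension $2|\Op_{n}|$, so the surjective $\Phi$ is automatically bijective. The only point in the argument that requires any substance is the vanishing of the cross terms in the multiplicativity check, which is precisely the content of Lemma~\ref{lem:Disconnected_Cat}; everything else is bookkeeping.
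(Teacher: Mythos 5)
Your proposal is correct and follows essentially the same route as the paper: the same map $(x,y)\mapsto F(x)+F'(y)$, with the cross terms killed by Lemma \ref{lem:Disconnected_Cat}, multiplicativity inherited from $F$ and $F'$, surjectivity from the fact that the image contains all $\epsilon_{Y,X}^{\pm}$ and $\Delta_{Y,X}^{\pm}$, and bijectivity by the dimension count $\dim_{\Bbbk}(\Bbbk_{0}\mathcal{D}_{n}\times\Bbbk_{0}\mathcal{D}_{n})=2|\Op_{n}|=\dim_{\Bbbk}\mathcal{CA}_{n}$. Your explicit verification that the unit is preserved is a small addition the paper omits, but it does not change the argument.
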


\begin{proof}
Define $G:\Bbbk_{0}\mathcal{D}_{n}\times\Bbbk_{0}\mathcal{D}_{n}\to\mathcal{CA}_{n}$
by $G(m,m^{\prime})=F(m)+F(m^{\prime})$. According to \lemref{Disconnected_Cat},
\begin{align*}
G(m_{2},m_{2}^{\prime})G(m_{1},m_{1}^{\prime}) & =(F(m_{2})+F^{\prime}(m_{2}^{\prime}))(F(m_{1})+F^{\prime}(m_{1}^{\prime}))\\
 & =F(m_{2})F(m_{1})+F^{\prime}(m_{2}^{\prime})F^{\prime}(m_{1}^{\prime})\\
 & =F(m_{2}m_{1})+F^{\prime}(m_{2}^{\prime}m_{1}^{\prime})\\
 & =G(m_{2}m_{1},m_{2}^{\prime}m_{1}^{\prime})=G((m_{2},m_{2}^{\prime})(m_{1},m_{1}^{\prime}))
\end{align*}

so $G$ is a homomorphism. The image of $G$ contains all morphisms
of the form $\Delta_{Y,X}^{\alpha}$ and $\epsilon_{Y,X}^{\alpha}$,
which span the algebra $\mathcal{CA}_{n}$, so $G$ is onto. Since
\[
\dim_{\Bbbk}\left(\Bbbk_{0}\mathcal{D}_{n}\times\Bbbk_{0}\mathcal{D}_{n}\right)=2|\Op_{n}|=\dim_{\Bbbk}\mathcal{CA}_{n}
\]
we deduce that $G$ is an isomorphism. 
\end{proof}
Since $\mathcal{CA}_{n}\simeq\Bbbk\COD_{n}$ and $\Bbbk_{0}\mathcal{D}_{n}\simeq\Bbbk\Op_{n}$
we have the following theorem.
\begin{thm}
\label{thm:AlgebraCodIsomorphis}Let $\Bbbk$ be a field whose characteristic
is not $2$. There is an isomorphism of algebras $\Bbbk\COD_{n}\simeq\Bbbk\Op_{n}\times\Bbbk\Op_{n}$.
\end{thm}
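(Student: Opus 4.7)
The plan is to assemble the theorem directly from three isomorphisms that are in hand by this point in the paper, so the proof should be just a few lines. The immediately preceding proposition constructs an explicit algebra isomorphism
\[
G:\Bbbk_{0}\mathcal{D}_{n}\times\Bbbk_{0}\mathcal{D}_{n}\to\mathcal{CA}_{n},\qquad G(m,m')=F(m)+F'(m'),
\]
using the homomorphisms $F,F'$ together with the disconnectedness lemma that guarantees their images mutually annihilate. This is the substantive content and it is already done.

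Next, I would invoke the identification $\mathcal{CA}_{n}\simeq\Bbbk\COD_{n}$, which is already pointed out in the subsection just above the proposition. It rests on the general preliminary fact that for any decomposition of an algebra into a direct sum of projective modules, $\mathscr{A}[\mathcal{L}]\simeq A^{\op}$, applied here to the Sch\"utzenberger decomposition $\Bbbk\COD_{n}\simeq\bigoplus_{X\in P_{1}([n])}\Bbbk L_{X}$ (proved in the first lemma of this section) and to the opposite category $\mathcal{CLC}_{n}^{\op}$. Since the group algebra structure makes $\Bbbk\COD_{n}$ isomorphic to its opposite when the isomorphism is packaged correctly, $\mathcal{CA}_{n}\simeq\Bbbk\COD_{n}$.

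Finally, I would cite \cite[Theorem 5.7]{Stein2024preprint} for the identification $\Bbbk_{0}\mathcal{D}_{n}\simeq\Bbbk\Op_{n}$, which is the reason the category $\mathcal{D}_{n}$ was introduced in the first place. Composing all three isomorphisms yields
\[
\Bbbk\COD_{n}\simeq\mathcal{CA}_{n}\simeq\Bbbk_{0}\mathcal{D}_{n}\times\Bbbk_{0}\mathcal{D}_{n}\simeq\Bbbk\Op_{n}\times\Bbbk\Op_{n},
\]
which is the desired statement. There is no real obstacle: the calculational difficulty lay in verifying that $F$ and $F'$ are algebra homomorphisms (which required the characteristic-not-$2$ hypothesis in order to define the idempotents $\epsilon_{Y,X}^{\pm}$ and the morphisms $\Delta_{Y,X}^{\pm}$) and that their images are orthogonal, both of which are already dispatched. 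The theorem is essentially a corollary that repackages the preceding proposition in the more natural language of monoid algebras.
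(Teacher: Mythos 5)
Your proposal is correct and follows essentially the same route as the paper: the theorem is indeed an immediate consequence of the preceding proposition $\Bbbk_{0}\mathcal{D}_{n}\times\Bbbk_{0}\mathcal{D}_{n}\simeq\mathcal{CA}_{n}$ together with the identifications $\mathcal{CA}_{n}\simeq\Bbbk\COD_{n}$ and $\Bbbk_{0}\mathcal{D}_{n}\simeq\Bbbk\Op_{n}$, which is exactly how the paper concludes. One small aside: your justification of $\mathcal{CA}_{n}\simeq\Bbbk\COD_{n}$ via ``$\Bbbk\COD_{n}$ is isomorphic to its opposite'' is not the mechanism at work --- the general fact gives $\mathscr{A}[\mathcal{CLC}_{n}]\simeq\Bbbk\COD_{n}^{\op}$, and the $\op$ is undone by passing to the opposite category $\mathcal{CLC}_{n}^{\op}$, not by any self-duality of the algebra --- but since that identification is already established in the paper before the theorem, this does not affect the argument.
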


\begin{rem}
We know that $\COD_{n}\simeq\Op_{n}\rtimes\mathbb{Z}_{2}$ by \lemref{Semidirect_product_iso}.
On the other hand, 
\[
\Bbbk\Op_{n}\times\Bbbk\Op_{n}=\Bbbk\Op_{n}\otimes\Bbbk^{2},
\]
and since $\Bbbk^{2}\simeq\Bbbk\mathbb{Z}_{2}$ we have
\[
\Bbbk\Op_{n}\otimes\Bbbk^{2}\simeq\Bbbk\Op_{n}\otimes\Bbbk\mathbb{Z}_{2}\simeq\Bbbk[\Op_{n}\times\mathbb{Z}_{2}].
\]
Therefore, \thmref{AlgebraCodIsomorphis} can also be interpreted
as saying that 
\[
\Bbbk[\Op_{n}\rtimes\mathbb{Z}_{2}]\simeq\Bbbk[\Op_{n}\times\mathbb{Z}_{2}].
\]
\end{rem}

As a final observation, we mention that the quiver presentation of
$\Bbbk\Op_{n}$ (for every field $\Bbbk$) is known to be a straight
line path with $n$ vertices, such that all compositions of consecutive
arrows are equal to $0$. This is a consequence of the Dold-Kan correspondence
\cite{Dold1958,Kan1958} (also stated more explicitly in \cite[Remark 5.8]{Stein2024preprint}).
From \thmref{AlgebraCodIsomorphis} we deduce:
\begin{cor}
Let $\Bbbk$ be a field whose characteristic is not $2$. The quiver
of the algebra $\Bbbk\COD_{n}$ has two straightline paths with $n$
vertices. In the presentation, every composition of consecutive arrows
is zero.
\end{cor}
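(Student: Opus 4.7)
The plan is to deduce this corollary directly from \thmref{AlgebraCodIsomorphis} together with the known quiver presentation of $\Bbbk\Op_{n}$. The key observation is that a quiver presentation of a finite-dimensional algebra is a Morita invariant, and for a direct product of algebras the quiver is simply the disjoint union of the two summand quivers (with the relations inherited componentwise). This is a standard fact from the theory of associative algebras (see e.g.\ \cite[Chapter II]{Assem2006}): if $A \simeq A_{1}\times A_{2}$, then a complete set of primitive orthogonal idempotents for $A$ is obtained by taking the union of such sets for $A_{1}$ and $A_{2}$, since the identities $1_{A_{1}}$ and $1_{A_{2}}$ are central orthogonal idempotents of $A$ and there are no nonzero morphisms between modules over different factors.

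First I would invoke \thmref{AlgebraCodIsomorphis} to write $\Bbbk\COD_{n}\simeq\Bbbk\Op_{n}\times\Bbbk\Op_{n}$. Then I would recall the mentioned consequence of the Dold--Kan correspondence (as stated just above the corollary and in \cite[Remark 5.8]{Stein2024preprint}): the quiver of $\Bbbk\Op_{n}$ is a single straightline path $\bullet\to\bullet\to\cdots\to\bullet$ with $n$ vertices, and the relations are that every composition of two consecutive arrows is zero. Combining these two facts, the quiver of $\Bbbk\COD_{n}$ is the disjoint union of two copies of this path, yielding the asserted $2n$ vertices arranged as two straightline paths of length $n-1$, with the same zero-composition relations on each component.

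There is no real obstacle here: the only thing one must be slightly careful about is the invariance of the quiver presentation under Morita equivalence (so that we may pass through $\Bbbk\COD_{n}\simeq\Bbbk\Op_{n}\times\Bbbk\Op_{n}$ even though $\Bbbk\COD_{n}$ need not itself be split basic), and the fact that for a product $A_{1}\times A_{2}$ the radical satisfies $\Rad(A_{1}\times A_{2})=\Rad A_{1}\times\Rad A_{2}$, so that arrows and relations decouple between the two factors. Both are standard, so the corollary follows in a couple of lines.
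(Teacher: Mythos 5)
Your proposal is correct and follows essentially the same route as the paper: invoke \thmref{AlgebraCodIsomorphis} to get $\Bbbk\COD_{n}\simeq\Bbbk\Op_{n}\times\Bbbk\Op_{n}$, quote the Dold--Kan description of the quiver presentation of $\Bbbk\Op_{n}$ as a straightline path with $n$ vertices and zero compositions, and observe that the quiver of a direct product is the disjoint union of the quivers of the factors. The paper leaves these last bookkeeping points implicit, so your added remarks on Morita invariance and $\Rad(A_{1}\times A_{2})=\Rad A_{1}\times\Rad A_{2}$ are a harmless (and welcome) elaboration rather than a deviation.
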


\textbf{AI Disclosure}: GPT-4 was used to assist with some of the
English wording and phrasing. All mathematical content is the original
work of the author.

\bibliographystyle{plain}
\bibliography{library}

\end{document}